\newcommand\wedgeproduct{\operatorname{\leftslice}}
\newcommand\WPsurf[2]{\Sigma_{#1, #2}}
\newcommand\Aut[1]{\operatorname{\mathsf{Aut}}(#1)}
\newcommand\smallSetOf[2]{\{#1\colon #2\}}
\newcommand\simplex[1]{\Delta_{#1}}
\renewcommand\wp[2]{\mathcal{W}_{#1,#2}}
\newcommand\eps{\varepsilon}
\newcommand\moduli[1]{\mu(#1)}
\newcommand\R{\mathbb{R}}
\newcommand\Zq{[q]}
\newcommand\Zp{[p]}
\renewcommand\l{\ell}
\newcommand\1{\mathbf1}
\newcommand\zero{\mathbf{0}}
\newcommand\rowzero{\mathbbmss O}
\newtheorem{theorem}{Theorem}[section]
\newtheorem{corollary}[theorem]{Corollary}
\newtheorem{lemma}[theorem]{Lemma}
\newtheorem{proposition}[theorem]{Proposition}
\theoremstyle{definition}
\newtheorem{definition}[theorem]{Definition}
\theoremstyle{remark}
\newtheorem{remark}[theorem]{Remark}
\newtheorem*{remark*}{Remark}
\newtheorem*{observation*}{Observation}
\begin{document}
\title{Polyhedral Surfaces in Wedge Products}
\author[R\"orig \and Ziegler]{Thilo R\"orig \and G\"unter~M. Ziegler}
\address{Thilo R\"orig, Inst.\ Mathematics MA 8-3, TU Berlin, D-10623 Berlin, Germany}
\email{roerig@math.tu-berlin.de}
\address{G\"unter~M. Ziegler, Inst.\ Mathematics MA 6-2, TU Berlin, D-10623 Berlin, Germany}
\email{ziegler@math.tu-berlin.de}
\thanks{The authors are supported by Deutsche Forschungsgemeinschaft, 
via the DFG Research Group ``Polyhedral Surfaces'', and a Leibniz grant.}
\date{\today}

\begin{abstract}\noindent
  We introduce the wedge product of two polytopes. The wedge product is described in terms of inequality systems, in terms
  of vertex coordinates as well as purely combinatorially, from the corresponding data of
  its constituents.  The wedge product construction can be described as an iterated ``subdirect product'' as introduced
  by McMullen (1976); it is dual to the ``wreath product'' construction of Joswig and Lutz (2005).

  One particular instance of the wedge product construction turns out to be especially interesting: The wedge products
  of polygons with simplices contain certain combinatorially regular polyhedral surfaces as subcomplexes.  These
  generalize known classes of surfaces ``of unusually large genus'' that first appeared in works by Coxeter (1937),
  Ringel (1956), and McMullen, Schulz, and Wills (1983). Via ``projections of deformed wedge products'' we obtain
  realizations of some of the surfaces in the boundary complexes of $4$-polytopes, and thus in $\R^3$.  As additional
  benefits our construction also yields polyhedral subdivisions for the interior and the exterior, as well as a great
  number of local deformations (``moduli'') for the surfaces in~$\R^3$. In order to prove that there are many moduli,
  we introduce the concept of “affine support sets” in simple polytopes.  Finally, we explain how duality theory for
  $4$-dimensional polytopes can be exploited in order to also realize combinatorially dual surfaces in $\R^3$ via dual
  $4$-polytopes.
\end{abstract}
\maketitle 

\section{Introduction}
\label{sec:introduction}

In this paper we present a new instance of the fruitful interplay between the construction of ``interesting'' families
of polytopes and the construction of special polyhedral surfaces.  Two polytope constructions that previously turned out
to yield interesting polyhedral surfaces are the neighborly cubical polytopes (NCPs) of Joswig and
Ziegler~\cite{Joswig2000} and their generalization to ``projected deformed products of polygons'' (PDPPs) by
Ziegler~\cite{Ziegler2004a}. Both these families of $4$-polytopes are obtained by projections of high-dimensional simple
polytopes. In addition to other interesting properties, the $4$-dimensional NCPs contain surfaces ``of unusually large
genus'' first described by Coxeter~\cite{C:SkewPolyhedra} in terms of symmetry groups, then studied by
Ringel~\cite{R:kombinatorischeProblemeNWuerfel} and realized in $\R^3$ by McMullen, Schulz, and Wills~\cite{McMullen1983}.
The PDPPs also contain surfaces ``of unusually large genus'' that generalize the surfaces of \mbox{McMullen}, Schulz
and Wills (see also Rörig \cite{Thilo-phd} and Ziegler \cite{Z102a}).
In both cases the surfaces are contained in the boundary complexes of $4$-polytopes, which yields realizations of the
surfaces in $\R^3$ via Schlegel projection.

The situation presented and studied here is quite analogous to that of the surfaces obtained from NCP and PDPP
polytopes.  In the first part of this paper, we define the ``wedge product'' of two polytopes.  It is most conveniently
defined via explicit linear inequality systems.

In the second part of the paper, we analyze one particular instance, the wedge product $C_p\wedgeproduct\Delta_{q-1}$ of
a $p$-gon and a $(q-1)$-simplex.  In the $2$-skeleton of this wedge product we describe a \emph{regular} polyhedral
surface $\WPsurf{p}{2q}$: Its symmetry group is transitive on the flags. In particular, the surface
$\WPsurf{p}{2q}$ is ``equivelar'' of type $\{p,2q\}$ in the terminology of McMullen, Schulz, and Wills \cite{MSW:EPM}: It
is composed of convex $p$-gons, and each vertex has degree $2q$.  Thus we obtain the first geometric construction of
such surfaces for a wide range of parameters $\{p,2q\}$.  While our construction produces the surfaces in a
high-dimensional space, a general lemma of Perles~\cite[p.~204]{Grunbaum2003} yields realizations for these surfaces
in~$\R^5$.

We shall see that any wedge product of a $p$-gon and a simplex is simple. Thus it admits ``deformed'' realizations that
are quite analogous to the ``deformed product'' realizations of Amenta and Ziegler~\cite{AmentaZiegler1998}, Joswig and
Ziegler~\cite{Joswig2000}, and Sanyal and Ziegler~\cite{Ziegler2004a} \cite{Z102}.

For $q=2$, we can arrange things
such that the polyhedral surface $\WPsurf{p}{4}$ in the   
$2$-skeleton ``survives'' the projection to the boundary of a $4$-polytope.  Furthermore, the surface comes to lie on
the lower hull of the $4$-polytope, so it may be projected orthogonally to $\R^3$. (Our observations in this context
allow us to perform the projections to $3$-space without the usual use of Schlegel diagrams.) With the deformation
technique we also obtain a realization of the prism over the wedge product such that the prism over the surface
$\WPsurf{p}{4}$ is preserved by the projection to~$\R^4$. This allows us to use polytope duality to obtain realizations
for the dual surfaces $\WPsurf{p}{4}^*$, which are equivelar of type~$\{4,p\}$.

For $q>2$, R\"orig and Sanyal \cite{RoerigSanyal:nonproject} show that a suitable deformed realization of
$C_p\wedgeproduct\Delta_{q-1}$ in $\R^{2+p(q-1)}$ for which the surface $\WPsurf{p}{2q}$ realized in the boundary would
survive the projection to $\R^4$ does \emph{not} exist if $p>3$, due to topological reasons. The case $p=3$
remains open.

As an additional benefit of our construction we obtain that the realization spaces are high-dimensional (that is, that
there are ``many moduli'') near our realizations of the surfaces. As a proof technique for this we introduce 
and study “affine support sets” of simple polytopes.

\section{From wedges to wedge products}
\label{sec:Wedges2WedgeProducts}

In this section we first review the wedge construction, which proved to be useful in studies related to the Hirsch
conjecture (see e.g.\ Fritzsche and Holt~\cite{FritzscheHolt1999} and Santos and Kim~\cite{KimSantos_Hirsch09}). 
Then we extend this to obtain the generalized wedge
construction and derive some of its basic properties. By iterating the generalized wedge construction we obtain wedge
products.  We define these primarily in terms of linear inequalities, as this description is most suitable for our
projection purposes.

\subsection{Wedges and generalized wedges}
\label{sec:GeneralizedWedges}

Let $P$ be a $d$-dimensional polytope in~$\R^d$ with $m$ facets, given by its facet description $Ax \le \1$, for
$A\in\R^{m\times d}$, $x=(x_0,\dots,x_{d-1})^t$. Let $F$ be the facet of $P$ defined by the hyperplane $a_0 x = 1$. The
classical \emph{wedge} over a polytope $P$ at $F$ is constructed as follows: Embed $P\times\{0\}$ in $\R^{d+1}$ and
construct the cylinder $P \times \R \subset \R^{d+1}$. Then cut the cylinder with two disjoint hyperplanes through $F
\times \{0\}$ such that both cuts are bounded. These hyperplanes divide the cylinder into one bounded and two unbounded
components. The bounded part is the \emph{wedge}.  
It may be described by the inequality system
\[
\operatorname{wedge}_F(P)\ :=\ 
\Big\{
\begin{pmatrix}
  x \\ x_d
\end{pmatrix} \in \R^{d+1}\ \Big|\
  \left(
  \begin{array}{ccc}
    A'  &         \\ 
    a_0 & \pm 1
  \end{array}
  \right) 
  \begin{pmatrix}
    x \\ x_d
  \end{pmatrix}
  \le 
  \begin{pmatrix}
    \1 \\ 1
  \end{pmatrix}
\Big\},
\]
where $A'$ is the matrix $A$ with the row $a_0$ removed. The two hyperplanes that cut the cylinder are $a_0 x + x_d =
1$ and $a_0 x - x_d = 1$. They may be constructed by combining the equation $a_0x=1$ that defines the facet~$F$ with the
inequality description $\pm x_d \leq 1$ of the interval $[-1,+1]$ in $x_d$-direction.

Deletion of the last coordinates yields a projetion $\operatorname{wedge}_F(P) \rightarrow P$:
Fourier--Motzkin elimination of $x_d$ (that is, addition of the two inequalities involving $x_d$)
recovers $a_0 x\le 1$ as an inequality that is valid, but not facet-defining for $\operatorname{wedge}_F(P)$.

For the projection $\operatorname{wedge}_F(P) \rightarrow P$ the fiber above every point of $P$ is an interval~$I$,
except that it is a single point $\{*\}$ above every point of~$F$.  This might be indicated by
\[
(I,\{{*}\})\ \ \longrightarrow\ \ \operatorname{wedge}_F(P)\ \ \longrightarrow\ \ (P,F).
\]
For our purposes we need the following more general construction. 

\begin{definition}[generalized wedge $P \wedgeproduct_F Q$]
  \label{def:GeneralizedWedge}
  Let $P$ be a $d$-polytope in $\R^d$ with $m$ facets given by the inequality system $Ax \leq \1$, and let $Q$ be a
  $d'$-polytope in $\R^{d'}$ with $m'$ facets given by $By \leq \1$. Let $F$ be the facet of $P$ defined by the
  hyperplane~\mbox{$a_0x = 1$}.

  The \emph{generalized wedge} $P\wedgeproduct_F Q$ of $P$ and $Q$ at $F$ is the $(d+d')$-dimensional polytope defined
  by
  \begin{equation}
    \label{eq:generalizedWedge}
    P \wedgeproduct_F Q\ :=\
    \Big\{
    \begin{pmatrix}
      x \\ y
    \end{pmatrix} \in \R^{d+d'}\ \Big|\
    \left(
      \begin{array}{ccc}
        A'  &         \\ 
        A_0 & B
      \end{array}
    \right) 
    \begin{pmatrix}
      x \\ y
    \end{pmatrix}
    \le 
    \begin{pmatrix}
      \1 \\ \1
    \end{pmatrix}
    \Big\},
  \end{equation}
  where $A'$ is the matrix $A$ without the row $a_0$, and $A_0=\1 a_0$ is the $m' \times d$ matrix 
  all of whose rows are equal to~$a_0$.
\end{definition}

The generalized wedge $P\wedgeproduct_F Q$ is a $(d+d')$-dimensional polytope with $m-1+m'$ facets.  The classical wedge
$\operatorname{wedge}_F(P)$ may be viewed as the generalized wedge $P \wedgeproduct_F [-1,1]$.

\begin{proposition}\label{prop:GenWedgeProj}
  If $P$ and $Q$ are polytopes of dimension $d$ resp. $d'$, then the generalized wedge is a $(d+d')$-polytope $P
  \wedgeproduct_F Q$. It comes with a projection to $P$ (to the first $d$ coordinates) such that the fiber above every
  point of $P$ is an affine copy of~$Q$, except that it is a single point~$\{*\}$ above every point of~$F$. That is,
  \[
  (Q,\{{*}\})\ \ \longrightarrow\ \ P\wedgeproduct_F Q\ \ \longrightarrow\ \ (P,F).
  \]
\end{proposition}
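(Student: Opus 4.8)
The plan is to analyze the inequality system \eqref{eq:generalizedWedge} directly, reading off the projection to the first $d$ coordinates and identifying the fibers. First I would fix a point $x\in P$ and examine what constraints the system imposes on $y$. The rows coming from $A'$ involve only $x$; since $P$ is defined by $Ax\le\1$, and $A'$ is $A$ with the single row $a_0$ deleted, these rows say exactly that $x$ satisfies all facet inequalities of $P$ except possibly $a_0x\le 1$. The remaining block of rows reads $(\1 a_0)x + By \le \1$, i.e., for each $i$, $\;(a_0x)\cdot 1 + b_i y \le 1$, which is $b_i y \le 1 - a_0 x$. So the fiber over $x$ is $\{\,y\in\R^{d'} : By \le (1-a_0x)\1\,\}$, which is the dilate $(1-a_0x)\,Q$ when $1-a_0x>0$, a single point $\{0\}$ when $a_0x=1$ (i.e.\ $x\in F$), and empty when $a_0x>1$. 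Hence the image of the projection is precisely $\{x : A'x\le\1,\ a_0x\le 1\} = P$, and over interior-of-$F$-complement points the fiber is an affine (scaled) copy of $Q$, while over $F$ it is the point $\{*\}$. This already gives the fiber bundle picture $(Q,\{*\}) \to P\wedgeproduct_F Q \to (P,F)$.

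Next I would verify the dimension and polytopality claims. Boundedness: $P\wedgeproduct_F Q$ projects onto the bounded set $P$ with bounded fibers (each fiber is a scaled copy of the bounded polytope $Q$, with scaling factor $1-a_0x\in[0,1]$ bounded on $P$), so it is bounded; it is obviously closed and the intersection of finitely many halfspaces, hence a polytope. Full-dimensionality: pick $x^\circ$ in the relative interior of $P$ not on $F$, so $1-a_0x^\circ>0$; then the fiber over $x^\circ$ is a full-dimensional (in $\R^{d'}$) scaled copy of $Q$, and as $x$ varies in a small $d$-dimensional neighborhood of $x^\circ$ in $P$ the union of these fibers contains a $(d+d')$-dimensional open set. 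Therefore $\dim(P\wedgeproduct_F Q) = d+d'$. This matches the facet count $m-1+m'$ remarked after the definition, though the dimension statement is all that Proposition~\ref{prop:GenWedgeProj} asserts.

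The only mildly delicate point is making the phrase ``affine copy of $Q$'' precise and uniform: the fiber over $x$ is the set $\{y : By\le(1-a_0x)\1\}$, which for $1-a_0x>0$ equals $(1-a_0x)\cdot Q$, an affine image of $Q$ under the invertible scaling $y\mapsto (1-a_0x)y$ (recall $Q$ contains the origin in its interior since it is given by $By\le\1$ with the right-hand side strictly positive, so $0\in\operatorname{int}Q$ and the scaling is a genuine affinity onto its image). I would state this scaling explicitly and note that it degenerates exactly when $a_0x=1$, i.e.\ on $F$, where the fiber collapses to $\{0\}$; denoting that point by $*$ gives the asserted sequence $(Q,\{*\})\to P\wedgeproduct_F Q\to(P,F)$. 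I do not anticipate a real obstacle here — the proof is essentially a direct Fourier--Motzkin-style inspection of the inequality system, entirely parallel to the classical wedge case recalled above with $Q=[-1,1]$ — so the main ``step'' is simply organizing the elimination of $y$ and the fiber description cleanly.
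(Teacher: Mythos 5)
Your proof is correct and follows essentially the same route as the paper: both read the fiber over $x$ off the inequality system as $By\le(1-a_0x)\1$, i.e.\ a copy of $Q$ scaled by $1-a_0x$, which collapses to a point exactly on $F$. The only real difference is that where you assert the fiber is empty for $a_0x>1$ (to conclude the image is exactly $P$), the paper makes the needed ingredient explicit — boundedness of $Q$ gives a positive row vector $c$ with $cB=\rowzero$ and $c\1=1$, so summing the block $A_0x+By\le\1$ with coefficients $c$ yields the valid inequality $a_0x\le1$ — and you should invoke that same positive dependence to justify your emptiness claim rather than leaving it implicit.
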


\begin{proof}
First we show that the projection maps to $P$, that is, that the
inequality $a_0x \leq 1$ defining the facet $F$ of $P$ is valid (but not facet-defining) for the
the generalized wedge: Since $Q$ is bounded, its facet normals (the rows of $B$) are positively dependent,
so there is a positive row vector $c$ satisfying $cB=\rowzero$ and $c\1 =1$;
thus summing the inequalities in the system $A_0 x + B y\le \1$ with coefficients given by $c$ yields
\[
a_0x = cA_0 x + cB y\le c\1 = 1
\]
since $cA_0=c(\1 a_0)=(c\1)a_0= a_0$.

Now given any point $x\in P$, the fiber above $x$ is given by the system
$By\le \1 - A_0 x$. For $x\in F$ we have $A_0x=\1$, and $By\le\zero$ describes a point.
For $x\in P\setminus F$ we have $A_0x<\1$, and $By\le \1 - A_0 x$ describes
a copy of~$Q$ that has been scaled by a factor of $1-a_0x$.
This is schematically shown in Figure~\ref{fig:GeneralizedWedge}. 
\end{proof}

\begin{figure}[ht]
  \centering
  \begin{overpic}[width=.4\textwidth]{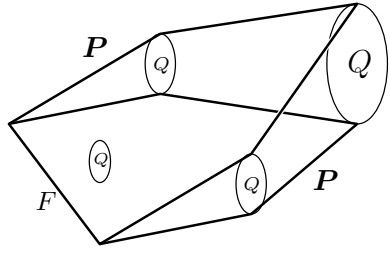}
    \put(8,10){$F$}
    \put(23,22){\tiny$Q$}
    \put(62,15){\scriptsize$Q$}
    \put(38.5,46.5){\scriptsize$Q$}
    \put(89,46){\Large$Q$}
    \put(80,15){\large{\boldmath$P$}}
    \put(20,50){\large{\boldmath$P$}}
  \end{overpic}
  \caption{This is a schematic drawing of the generalized wedge $P \wedgeproduct_F Q$. It is a degeneration of the
    product $P \times Q$ of two polytopes and contains many copies of both constituents.}
  \label{fig:GeneralizedWedge}
\end{figure}

\begin{remark}
  The subdirect product construction introduced by McMullen~\cite{McMullen1976} subsumes the generalized wedge 
  $P \wedgeproduct_F Q$ as the special case
 $(P,F)\otimes(Q,\emptyset)$.
\end{remark}

\begin{remark}
  The generalized wedge may be interpreted as a limit case (degeneration) of a deformed product in the sense of Amenta and
  Ziegler~\cite{AmentaZiegler1998}: If we consider an inequality $a_0x \leq 1+\eps$ for small $\eps > 0$ instead of the
  inequality $a_0x\leq 1$ defining the facet, then this inequality is strictly satisfied by all $x \in P$.  Further an
  inequality system similar to Equation~\eqref{eq:generalizedWedge} in Definition~\ref{def:GeneralizedWedge} defines a
  deformed product:
  \[
    \Big\{
    \begin{pmatrix}
      x \\ y
    \end{pmatrix} \in \R^{d+d'}\ \Big|\
    \left(
      \begin{array}{rcc}
        A_{\phantom0}       &         \\ 
        \frac{1}{1+\eps}A_0 & B
      \end{array}
    \right) 
    \begin{pmatrix}
      x \\ y
    \end{pmatrix}
    \le 
    \begin{pmatrix}
      \1 \\ \1
    \end{pmatrix}
    \Big\},
  \]
  where $A_0$ the $m' \times d$ matrix with all rows equal to $a_0$.  If $\eps \to 0$ then the facet $F \times Q$
  of the product $P \times Q$ degenerates to a lower dimensional face $F \times \{0\}$, and we obtain the generalized
  wedge. See Figure~\ref{fig:5gonGWs2} for an example.
  \begin{figure}[tbh]
    \centering
    \includegraphics[width=.3\textwidth]{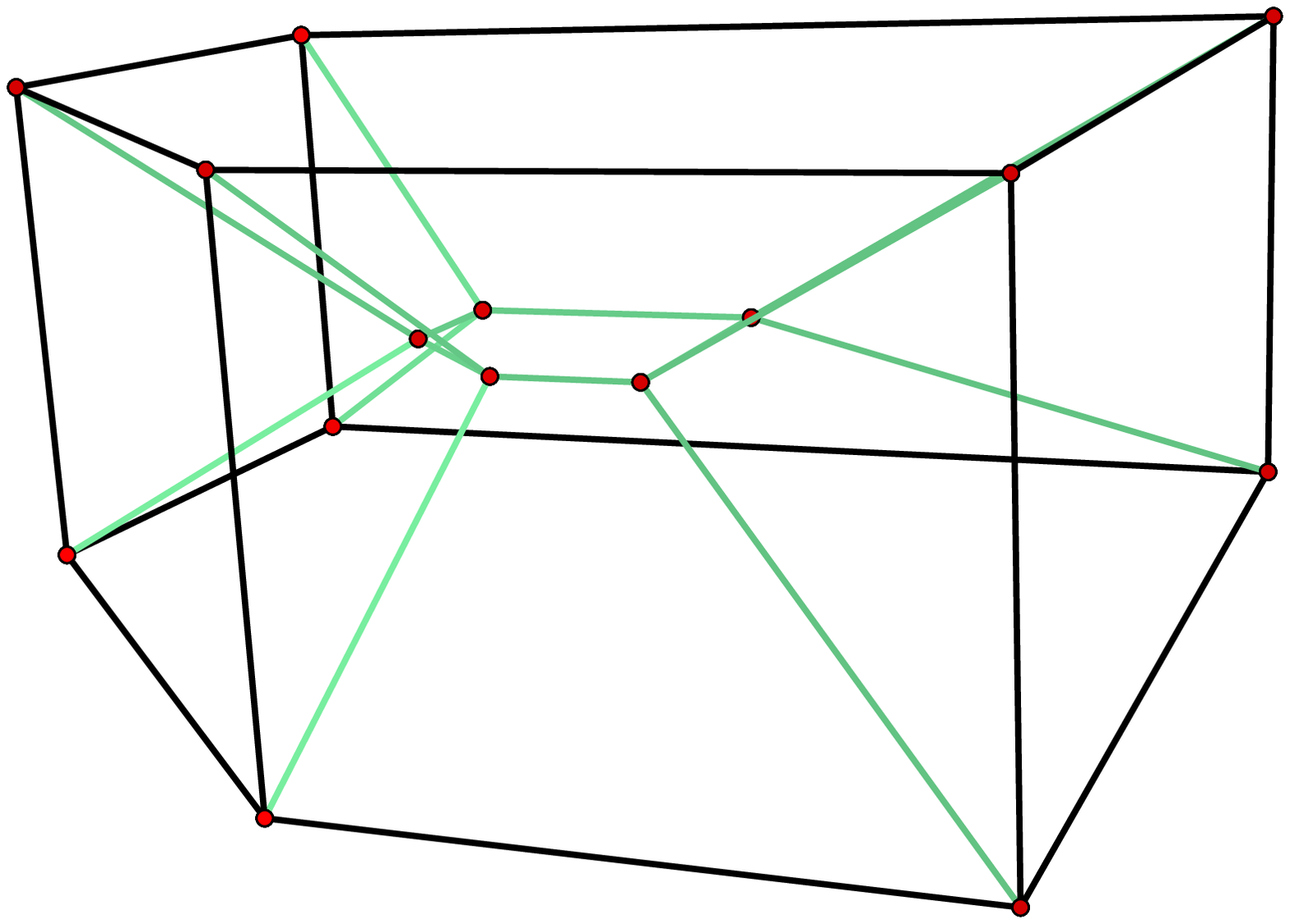}
    \includegraphics[width=.3\textwidth]{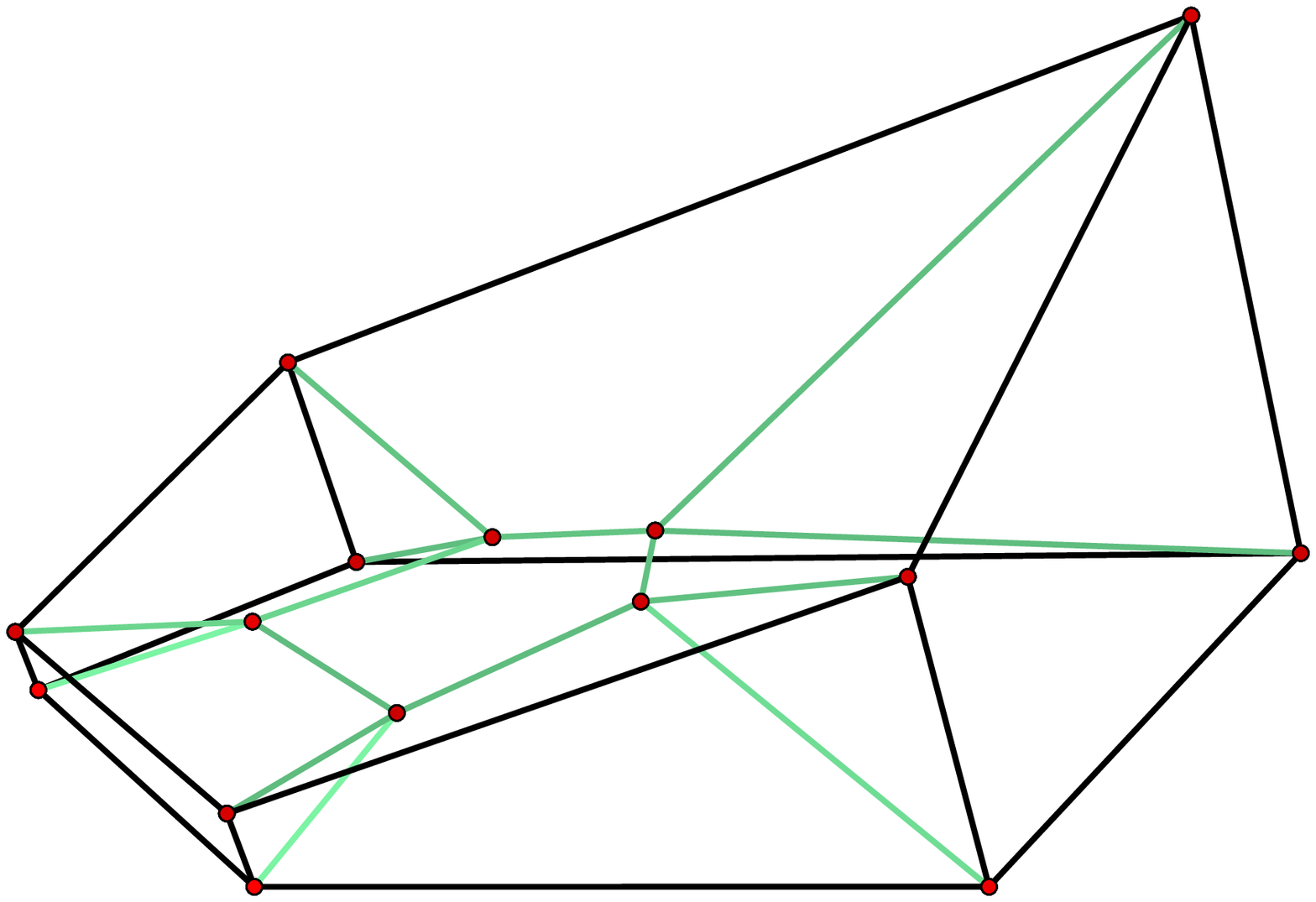}
    \includegraphics[width=.3\textwidth]{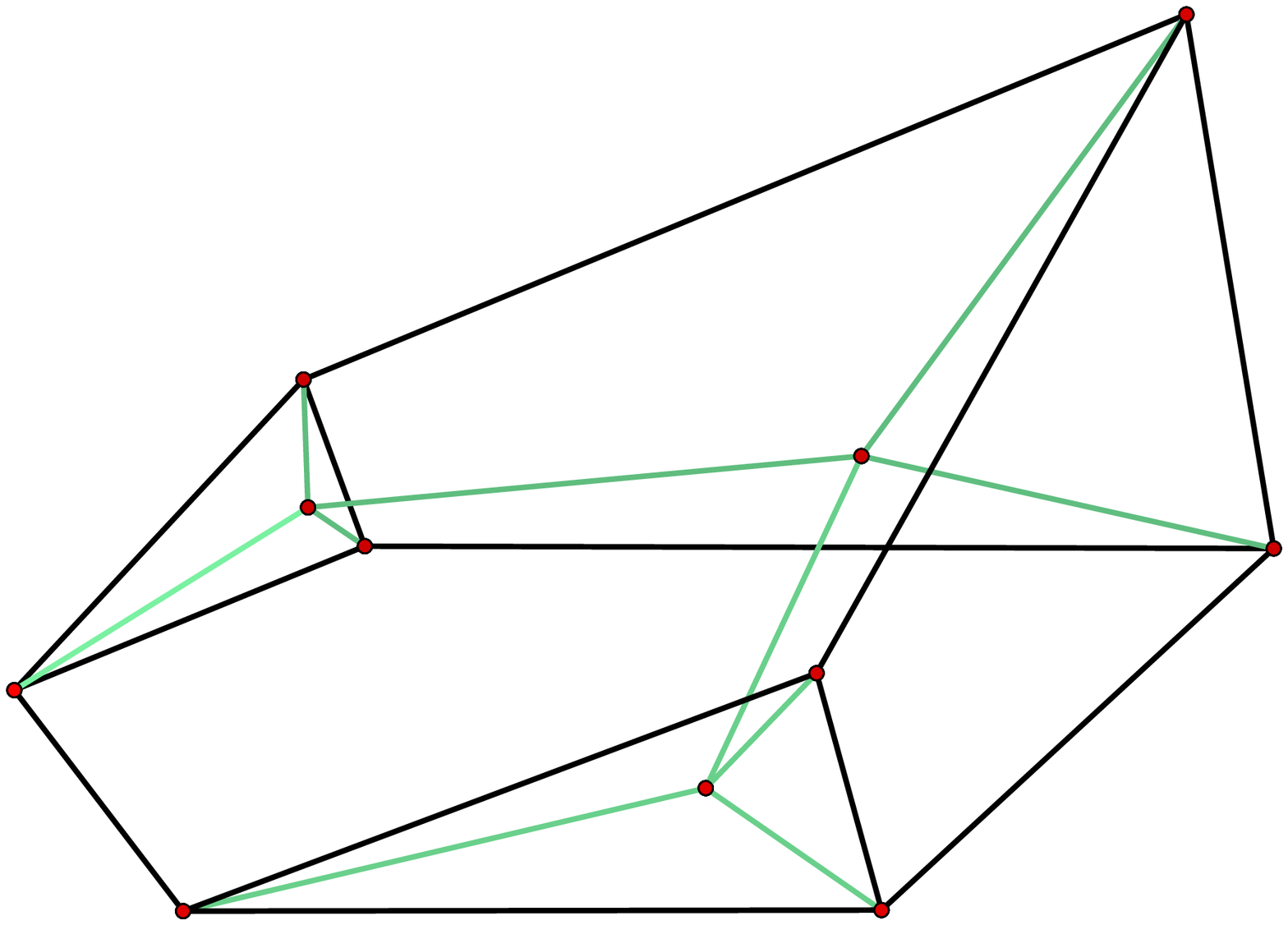}
    \caption{Schlegel diagrams showing the degeneration of a product to a generalized wedge: The orthogonal product of
      pentagon and triangle (left), a deformed product of pentagon and triangle (middle), and the generalized wedge of
      pentagon and triangle (right).}
    \label{fig:5gonGWs2}
  \end{figure}
\end{remark}

Using the degeneration of the deformed product to a generalized wedge or a little linear algebra we obtain the vertices
of the generalized wedge.

\begin{lemma}[Vertices of the generalized wedge $P \wedgeproduct_F Q$]\label{lem:VerticesOfGeneralizedWedge}
  Let $P \wedgeproduct_F Q$ be the generalized wedge of $P$ and $Q$ at $F$, 
  where $P$ has $n$ vertices and $Q$ has $n'$ vertices, and let $H = \smallSetOf{x \in \R^d}{a_0x =
    1}$ be a hyperplane defining the facet $F$ with $\bar n$ vertices. Then $P \wedgeproduct_F Q$ has 
 $( n     - \bar n) n'    +\bar n $ vertices.
 These belong to two families 
  \[
  u_{k\ell} = 
  \begin{cases}
    \tbinom{v_k}{0}                & \text{for }v_k \in    F, \quad 0\le k<n,\\
    \tbinom{v_k}{(1-a_0v_k) w_\ell}& \text{for }v_k \notin F, \quad 0\le k<n,\ 0\le \ell<n',
  \end{cases}
  \]
  where $v_k$ is a vertex of $P$ and $w_\ell$ is a vertex of $Q$. 
\end{lemma}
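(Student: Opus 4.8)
The plan is to determine the vertex set of $R:=P\wedgeproduct_F Q$ by a two–step argument: first show that $R$ is the convex hull of the listed points $u_{k\ell}$, so that its vertices form a subset of them; then verify that each $u_{k\ell}$ is genuinely a vertex, using the standard criterion that a point of a polytope $\{z:Mz\le\1\}$ is a vertex iff the rows of $M$ that are tight there span the ambient space. I would record at the outset two facts from (the proof of) Proposition~\ref{prop:GenWedgeProj}: the rows of $B$ span $\R^{d'}$ since $Q$ is a full–dimensional bounded polytope, and for $x\in P$ the fiber of $R\to P$ over $x$ equals $\{x\}\times(1-a_0x)Q$, which is the single point $\binom{x}{0}$ exactly when $x\in F$.

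Before anything else I check $u_{k\ell}\in R$. For $v_k\in F$ one has $a_0v_k=1$ and $b_j\cdot 0=0\le 1$; for $v_k\notin F$ the wedge inequality $a_0v_k+(1-a_0v_k)\,b_jw_\ell\le 1$ is equivalent to $(1-a_0v_k)(b_jw_\ell-1)\le 0$, which holds because $1-a_0v_k\ge 0$ and $w_\ell\in Q$. For the hull direction, take $\binom{x}{y}\in R$. If $x\in F$, then $y=0$ and $\binom{x}{0}$ is a convex combination of the points $\binom{v_k}{0}$, $v_k$ a vertex of $F$, since $x\in F=\conv\{v_k\in F\}$. If $x\notin F$, write $x=\sum_k\lambda_k v_k$ as a convex combination of vertices of $P$ and $y=(1-a_0x)\,y'$ with $y'=\sum_\ell\mu_\ell w_\ell\in Q$; one checks that
\[
\sum_{v_k\in F}\lambda_k\binom{v_k}{0}\ +\ \sum_{v_k\notin F}\sum_{\ell}\lambda_k\mu_\ell\,\binom{v_k}{(1-a_0v_k)w_\ell}
\]
has nonnegative coefficients summing to $1$, has first coordinate $\sum_k\lambda_k v_k=x$ (using $\sum_\ell\mu_\ell=1$), and has second coordinate $\bigl(\sum_k\lambda_k(1-a_0v_k)\bigr)y'=(1-a_0x)\,y'=y$ (the terms with $v_k\in F$ drop out since then $1-a_0v_k=0$). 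Hence $R=\conv\{u_{k\ell}\}$, so every vertex of $R$ occurs among the $u_{k\ell}$.

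It remains to see that each $u_{k\ell}$ really is a vertex. At $\binom{v_k}{0}$ with $v_k\in F$, all $m'$ wedge inequalities are tight (their left-hand sides equal $a_0v_k=1$), giving the normals $\binom{a_0}{b_j}$; the rows $a_i$ of $A$ tight at $v_k$ (which include $a_0$) span $\R^d$ because $v_k$ is a vertex of $P$, so the corresponding normals $\binom{a_i}{0}$ span $\R^d\times\{0\}$, and subtracting $\binom{a_0}{0}$ from the $\binom{a_0}{b_j}$ and using that the $b_j$ span $\R^{d'}$ shows the tight normals span all of $\R^{d+d'}$. At $\binom{v_k}{(1-a_0v_k)w_\ell}$ with $v_k\notin F$, the tight wedge inequalities are exactly those with $b_jw_\ell=1$; now $a_0$ is not tight, but the other rows of $A$ tight at the vertex $v_k$ of $P$ still span $\R^d$, so their normals $\binom{a_i}{0}$ span $\R^d\times\{0\}$, and subtracting $\binom{a_0}{0}$ from the tight wedge normals $\binom{a_0}{b_j}$ leaves the $b_j$ tight at the vertex $w_\ell$ of $Q$, which span $\R^{d'}$; again the tight normals span $\R^{d+d'}$. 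Thus the vertex set of $R$ is exactly $\{u_{k\ell}\}$, and counting these (pairwise distinct) points gives $\bar n+(n-\bar n)n'$, as claimed. Alternatively one could obtain the vertices by taking the $\eps\to0$ limit of the vertices of the deformed product from the Remark above. There is no serious obstacle here; the only care needed is the bookkeeping in the hull computation and keeping straight whether $a_0$ is among the tight rows of $A$ in the two spanning arguments.
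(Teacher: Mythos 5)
Your proof is correct, and it is exactly the ``little linear algebra'' argument that the paper invokes for this lemma without writing it out (the lemma appears there with no proof beyond the one-line remark pointing to either the deformed-product degeneration or linear algebra). The only step worth tightening is the case $v_k\in F$: the vector $\tbinom{a_0}{\zero}$ is not itself one of the tight facet normals of $P\wedgeproduct_F Q$, so before ``subtracting'' it from the $\tbinom{a_0}{b_j}$ you should first exhibit it in their span --- for instance as $\sum_j c_j\tbinom{a_0}{b_j}$ using the positive dependence $cB=\rowzero$, $c\1=1$ from the proof of Proposition~\ref{prop:GenWedgeProj}, or by noting that the differences $\tbinom{a_0}{b_j}-\tbinom{a_0}{b_{j'}}=\tbinom{\zero}{b_j-b_{j'}}$ already span $\{0\}\times\R^{d'}$.
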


The generalized wedge $P \wedgeproduct_F Q$ contains many faces that are affinely equivalent to the ``base'' $P$.

\begin{proposition}
\label{prop:GenWedgePFace}
For an arbitrary vertex $w$ of $Q$ the
convex hull of the vertices $\tbinom{v_k}{(1-a_0v_k)w}$ for $k=0,\dots,n-1$ is a face that is affinely equivalent to $P$.
\end{proposition}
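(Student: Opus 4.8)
The plan has two parts. First, I would show that the convex hull in question is the image of $P$ under an explicit affine embedding $\R^d\hookrightarrow\R^{d+d'}$, which immediately makes it affinely equivalent to $P$. Second -- this is the part with actual content -- I would exhibit a single linear inequality that is valid for $P\wedgeproduct_F Q$ and tight exactly on that image, thereby identifying it as a face.

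For the first part, consider the map $\phi\colon\R^d\to\R^{d+d'}$ defined by $\phi(x):=\tbinom{x}{(1-a_0x)w}$. It is affine, since its first block is the identity map of $\R^d$ and its second block $(1-a_0x)w$ is affine in $x$ (the scalar $a_0x$ being linear in $x$), and it is injective, because the projection onto the first $d$ coordinates recovers $x$. Hence $\phi$ restricts to an affine isomorphism from $P=\conv\{v_0,\dots,v_{n-1}\}$ onto $\phi(P)=\conv\{\phi(v_0),\dots,\phi(v_{n-1})\}$. Moreover, writing $w=w_\ell$, one has $\phi(v_k)=u_{k\ell}$ in the notation of Lemma~\ref{lem:VerticesOfGeneralizedWedge} (for $v_k\in F$ both sides equal $\tbinom{v_k}{0}$), so every $\phi(v_k)$ is in fact a vertex of $P\wedgeproduct_F Q$. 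It thus remains only to show that $\phi(P)$ is a \emph{face}.

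Since $w$ is a vertex of $Q$, choose a row vector $g$ with $gy\le gw=:\gamma_0$ for all $y\in Q$, equality holding if and only if $y=w$. I claim that the linear functional $\tbinom{x}{y}\mapsto\gamma_0\,a_0x+gy$ on $\R^{d+d'}$ is bounded above by $\gamma_0$ on $P\wedgeproduct_F Q$ and attains the value $\gamma_0$ exactly on $\phi(P)$. Indeed, for $\tbinom{x}{y}\in P\wedgeproduct_F Q$, Proposition~\ref{prop:GenWedgeProj} gives $x\in P$, hence $a_0x\le 1$; and fixing $x$, the remaining constraints on $y$ read $By\le(1-a_0x)\1$. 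If $x\in F$ then $a_0x=1$, so $By\le\zero$ forces $y=\zero$ (as $Q$ is bounded), the functional equals $\gamma_0$, and $\tbinom{x}{\zero}=\phi(x)$. If $x\notin F$ then $1-a_0x>0$, so $y=(1-a_0x)y'$ for a unique $y'\in Q$, and $gy=(1-a_0x)\,gy'\le(1-a_0x)\gamma_0$, whence $\gamma_0\,a_0x+gy\le\gamma_0$, with equality if and only if $gy'=\gamma_0$, i.e.\ $y'=w$, i.e.\ $\tbinom{x}{y}=\tbinom{x}{(1-a_0x)w}=\phi(x)$. Conversely, $\gamma_0\,a_0x+g((1-a_0x)w)=\gamma_0$ for every $x$, so the bound is attained all along $\phi(P)$. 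Therefore $\smallSetOf{\tbinom{x}{y}\in\R^{d+d'}}{\gamma_0\,a_0x+gy=\gamma_0}$ is a supporting hyperplane of $P\wedgeproduct_F Q$ that intersects it in exactly $\phi(P)$, so $\phi(P)$ is a face, and it is affinely equivalent to $P$ via $\phi$.

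I do not expect a genuine obstacle here: the only thing that must be chosen correctly is the supporting functional, which has to be built from a functional $g$ that \emph{exposes} the vertex $w$ of $Q$ -- this is precisely where the hypothesis that $w$ is a vertex is used -- after which a one-line computation settles each of the two regimes $x\in F$ and $x\notin F$ in the fiber description. (As a check, $\zero\in\operatorname{int}Q$ since $B\zero=\zero<\1$, so $\gamma_0>0$; equivalently, validity of the inequality can also be seen by writing $\tfrac1{\gamma_0}g$ as a nonnegative combination of those rows $b_j$ of $B$ with $b_jw=1$.)
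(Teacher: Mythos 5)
Your proof is correct; it arrives at the same face as the paper but certifies that it \emph{is} a face by a different mechanism. The paper's proof selects the subsystem $\bar A_0 x+\bar B y\le\1$ of the wedge's inequality description indexed by the facets of $Q$ through $w$, invokes the fact that a valid subsystem which is tight at a boundary point (here $\tbinom{\zero}{w}$) cuts out a face, and then parametrizes that face affinely by inverting $\bar B$. You instead write down the affine embedding $\phi(x)=\tbinom{x}{(1-a_0x)w}$ first and exhibit a single supporting functional $\gamma_0 a_0x+gy$ built from a functional $g$ exposing $w$ in $Q$. These are two views of the same fact --- as your closing remark notes, $g/\gamma_0$ is a nonnegative combination of the rows $b_j$ with $b_jw=1$, so your inequality is precisely the corresponding nonnegative combination of the paper's subsystem --- but the trade-off is real: the paper's version records exactly which facets $h_{0,j}$ of $P\wedgeproduct_F Q$ contain the face, which is the form that gets generalized in Proposition~\ref{prop:SpecialFacesWedgeProducts}, whereas yours uses only the definition of an exposed face together with the fiber description from Proposition~\ref{prop:GenWedgeProj} and handles the collapsed fibers over $F$ explicitly (your observation that $By\le\zero$ forces $y=\zero$ by boundedness is exactly right). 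The one external input you rely on is Lemma~\ref{lem:VerticesOfGeneralizedWedge} for the containment $\phi(P)\subseteq P\wedgeproduct_F Q$; since that lemma is stated in the paper without proof, you could make your argument fully self-contained by the one-line check $A_0x+(1-a_0x)Bw\le \1 a_0x+(1-a_0x)\1=\1$ for $x\in P$. Your parenthetical verification that $\gamma_0>0$ is harmless but not actually needed anywhere in the argument.
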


\begin{proof}
  The vertex $w\in Q$ is described by $\bar B y=\1$, where $\bar B$ is an invertible square matrix, and $\bar B y\le\1$
  is a subsystem of $By\le \1$. The corresponding subsystem $\bar A_0 x+\bar B y\le\1$ defines a face~$G_w$ of $P
  \wedgeproduct_F Q$, since it is a valid subsystem which is tight for the point $\tbinom{\zero}{w}$ that lies on the
  boundary of $P \wedgeproduct_F Q$. For any $x\in P$ we get a unique solution $y$ for $\bar A_0 x+\bar B y=\1$, which depends
  affinely
  on~$x$. Hence $x \mapsto (x,y)$ yields an affine equivalence between $P$ and the face~$G_w$ of~$P \wedgeproduct_F Q$
  that maps the vertices $v_k$ of $P$ to $\tbinom{v_k}{(1-a_0v_k)w}$ of~$G_w$.
\end{proof}

In rare cases the generalized wedge of two polytopes is simple. To characterize the simple wedge products we simply
count the facets at each vertex.
\begin{corollary}[Simple generalized wedges]
  \label{cor:SimpleGenWedges} 
  The generalized wedge $P \wedgeproduct_F Q$ of~$P$ and~$Q$ at~$F$ is simple if and only if
  \begin{itemize}[~$\bullet$]
  \item $P$ is a point and~$Q$ is simple (trivial case) or
  \item $P$ is simple and $Q$ is a simplex.
  \end{itemize}
\end{corollary}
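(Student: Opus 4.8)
The plan is to prove the characterization by counting, for each vertex of $P\wedgeproduct_F Q$, how many facets pass through it, and comparing this number with $\dim(P\wedgeproduct_F Q)=d+d'$: a polytope is simple precisely when every vertex lies on exactly as many facets as its dimension. By Proposition~\ref{prop:GenWedgeProj} and the remark following Definition~\ref{def:GeneralizedWedge}, the facets of $P\wedgeproduct_F Q$ are exactly the $m-1$ hyperplanes $a_ix=1$ given by the rows of $A'$ together with the $m'$ hyperplanes $a_0x+b_jy=1$ given by the rows of $(A_0\mid B)$, whereas the valid inequality $a_0x\le 1$ is \emph{not} facet-defining and so must be excluded from the count; the vertices are the points $u_{k\ell}$ of Lemma~\ref{lem:VerticesOfGeneralizedWedge}. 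If $P$ is a point, then $A'$ and $A_0$ have no columns, the defining system collapses to $By\le\1$, and $P\wedgeproduct_F Q\cong Q$, which settles the trivial case; from now on I would assume $d\ge 1$, so that $F$ is a genuine nonempty facet, and note that then both families of vertices are nonempty (since $F\ne\emptyset$ gives $\bar n\ge 1$, and $F\subsetneq P$ gives $n-\bar n\ge 1$).

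First I would examine a vertex of the first family, $u_k=\tbinom{v_k}{0}$ with $v_k\in F$. A row $a_i$ of $A'$ is tight at $u_k$ iff $a_iv_k=1$, i.e.\ iff its facet of $P$ (which is not $F$) passes through $v_k$; hence the number of tight rows of $A'$ is the number of facets of $P$ through $v_k$ minus one, which is $\ge d-1$, with equality iff $v_k$ is a simple vertex of $P$. On the other hand, every row of $(A_0\mid B)$ is tight at $u_k$, since $a_0v_k+b_j\cdot 0=a_0v_k=1$; this adds all $m'$ of them. Thus $u_k$ lies on at least $(d-1)+m'$ facets, and it is a simple vertex of $P\wedgeproduct_F Q$ iff $v_k$ is simple in $P$ and $m'=d'+1$, i.e.\ $Q$ is a simplex.

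Next I would examine a vertex of the second family, $u_{k\ell}=\tbinom{v_k}{(1-a_0v_k)w_\ell}$ with $v_k\notin F$, so that $a_0v_k<1$. Again the tight rows of $A'$ are those $a_i$ with $a_iv_k=1$, now automatically distinct from $a_0$, so their number is exactly the number of facets of $P$ through $v_k$, at least $d$, with equality iff $v_k$ is simple. Substituting $u_{k\ell}$ into a row $a_0x+b_jy\le 1$ of $(A_0\mid B)$ yields the value $a_0v_k+(1-a_0v_k)(b_jw_\ell)$, which equals $1$ iff $b_jw_\ell=1$, using $1-a_0v_k\ne 0$; hence the tight rows of $(A_0\mid B)$ correspond bijectively to the facets of $Q$ through $w_\ell$, at least $d'$ of them, with equality iff $w_\ell$ is simple in $Q$. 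So $u_{k\ell}$ lies on at least $d+d'$ facets, with simplicity iff $v_k$ is simple in $P$ and $w_\ell$ is simple in $Q$.

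Combining the two families, $P\wedgeproduct_F Q$ is simple iff every vertex of $F$ is simple in $P$ and $Q$ is a simplex, and at the same time every vertex of $P$ outside $F$ is simple and every vertex of $Q$ is simple; since a simplex is simple, this is equivalent to ``$P$ is simple and $Q$ is a simplex''. Conversely, when $P$ is simple and $Q$ is a simplex ($m'=d'+1$), the counts above both collapse to exactly $d+d'$ facets through every vertex, so $P\wedgeproduct_F Q$ is simple. The computation is elementary; the one point demanding care — and the place I expect a careless argument to go wrong — is the bookkeeping of which inequalities are facets: one must remember that $a_0x\le 1$, although tight at every vertex of the first family, is not a facet (Proposition~\ref{prop:GenWedgeProj}) and therefore does not enter the count, and that the full list of $m-1+m'$ inequalities is irredundant, as recorded right after Definition~\ref{def:GeneralizedWedge}.
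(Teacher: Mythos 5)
Your proof is correct and follows exactly the route the paper indicates for Corollary~\ref{cor:SimpleGenWedges}: the paper only remarks that one ``simply counts the facets at each vertex,'' and your argument carries out precisely that count, using the facet list from Definition~\ref{def:GeneralizedWedge} (with $a_0x\le 1$ excluded as non-facet-defining, per Proposition~\ref{prop:GenWedgeProj}) and the vertex list from Lemma~\ref{lem:VerticesOfGeneralizedWedge}. The bookkeeping in both vertex families is accurate, so this is a complete, fleshed-out version of the paper's own (sketched) proof.
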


\subsection{Wedge products}
\label{ssec:WedgeProducts}

The wedge product of two polytopes $P$ and $Q$ is obtained by iterating the generalized wedge construction for all
facet defining inequalities $a_ix=1$ of $P$. This is made explicit in the following definition.

\begin{definition}[wedge product $P \wedgeproduct Q$]
  \label{def:WedgeProduct} 
  Let $P$ be a $d$-polytope in~$\R^d$ given by $Ax\le\1$ with $m$ facets defined by $a_i x \leq 1$ for $i\in[m]$ and let
  $Q$ be a $d'$-polytope in~$\R^{d'}$ given by $By \le \1$ with $m'$ facets that are given by $b_jy \leq 1$ for
  $j\in[m']$.  For $i \in [m]$ denote by $A_i$ the $(m' \times d)$-matrix $\1 a_i$ with rows equal to $a_i$. The
  \emph{wedge product} $P \wedgeproduct Q$ is defined by the following system of inequalities:
  \begin{equation}
    \label{eq:wedgeProduct}
    P \wedgeproduct Q = 
    \Big\{
    \begin{pmatrix}
      x \\ y_0 \\ y_1 \\ \vdots \\ y_{m-1}
    \end{pmatrix}\in \R^{d+md'}
    \renewcommand{\arraystretch}{1}
    \ \Big|\ 
    \left(
      \arraycolsep2pt
      \begin{array}{ccccc}
        A_0    &  B   &     &       &      \\
        A_1    &      &  B  &       &      \\
        \vdots &      &     &\ddots &      \\
        A_{m-1}&      &     &       &   B  
      \end{array}
    \right)
    \begin{pmatrix}
      x \\ y_0 \\ y_1 \\ \vdots \\ y_{m-1}
    \end{pmatrix}
    \le
    \begin{pmatrix}
      \1 \\ \1 \\ \vdots \\ \1
    \end{pmatrix}
    \Big\}.
  \end{equation}
  We denote the hyperplanes $a_i x + b_j y_i = 1$ defining the wedge product by $h_{i,j}$ with $(i,j) \in
  [m]\times[m']$.
\end{definition}

\begin{remark}
  Comparing the inequality description of the wedge product to the vertex description of the wreath products of Joswig
  and Lutz~\cite{JoswigLutz2005}, we observe that wedge product and wreath product are dual constructions. In
  other words, if $P$ and $Q$ are polytopes and $P^*$ and $Q^*$ their duals, then the wedge product $P \wedgeproduct Q$
  is the dual of the wreath product $Q^* \wr P^*$.
\end{remark}

\begin{proposition}
  \label{prop:WPFiber}
  The wedge product $P \wedgeproduct Q$ of $P$ and $Q$ is a $(d+md')$-dimensional polytope with~$mm'$ facets~$h_{i,j}$
  indexed by $i\in[m]$ and $j\in[m']$.  It comes with a linear projection $P \wedgeproduct Q \rightarrow P$ (to the
  first $d$ coordinates).  The fiber above every interior point of $P$ is a product~$Q^m$, while the $i$-th factor in
  the fiber degenerates to a point above every point of~$P$ that is contained in the $i$-th facet of~$P$.
\end{proposition}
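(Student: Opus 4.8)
The plan is to verify each assertion of Proposition~\ref{prop:WPFiber} in turn, exploiting the fact that the wedge product is built by iterating the generalized wedge construction, so that most of the work is already done by Proposition~\ref{prop:GenWedgeProj} and its proof. First I would establish that $P \wedgeproduct Q$ is a genuine $(d+md')$-dimensional polytope: boundedness follows because the $i=0$ block $A_0 x + B y_0 \le \1$ already forces $x$ into a bounded region (by the argument of Proposition~\ref{prop:GenWedgeProj}, summing with the positive vector $c$ with $cB = \rowzero$, $c\1 = 1$, gives $a_0 x \le 1$; combined with the $A'$-rows carried implicitly one checks $x$ ranges over $P$), and once $x$ is bounded each block $A_i x + B y_i \le \1$ bounds $y_i$ since the rows of $B$ positively span $\R^{d'}$. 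Full dimensionality is seen by producing an interior point: take $x$ in the interior of $P$, so $a_i x < 1$ for all $i$, and then $y_i = 0$ satisfies $A_i x + B y_i = \1 a_i x < \1$ strictly, so $(x, 0, \dots, 0)$ is interior.

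Next I would count facets. Each row $a_i x + b_j y_i \le 1$ is the inequality $h_{i,j}$, so there are at most $mm'$ facets. To see each $h_{i,j}$ is actually facet-defining, I would produce a point on $P \wedgeproduct Q$ at which only $h_{i,j}$ is tight: pick $x$ in the relative interior of the $i$-th facet of $P$ (so $a_i x = 1$ but $a_k x < 1$ for $k \ne i$), and pick $y_i$ in the relative interior of the $j$-th facet of $Q$ scaled appropriately — since $a_i x = 1$ the block forces $B y_i \le \zero$, i.e.\ $y_i = 0$ if $0$ is interior to $Q$; to get genuine slack I instead take $x$ just inside the facet, $a_i x = 1 - \delta$, and $y_i = \delta w$ with $w$ a relative-interior point of the $j$-th facet of $Q$, and $y_k = 0$ for $k \ne i$. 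A small perturbation argument (or the dimension count $d + md' - 1$ for the affine span of such tight points) then shows $h_{i,j}$ has codimension one. The projection to the first $d$ coordinates is linear by construction; that its image is $P$ follows by applying the Proposition~\ref{prop:GenWedgeProj} argument to each of the $m$ blocks, recovering all $m$ inequalities $a_i x \le 1$ as valid consequences.

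Finally, the fiber structure. For a fixed $x$, the defining system decouples completely across the index $i$: the $i$-th block $B y_i \le \1 - A_i x = (1 - a_i x)\1$ constrains only $y_i$, so the fiber over $x$ is the Cartesian product over $i \in [m]$ of the solution sets $\{y_i : B y_i \le (1 - a_i x)\1\}$. For $x$ in the interior of $P$ every coefficient $1 - a_i x$ is strictly positive, so the $i$-th factor is $Q$ scaled by $1 - a_i x$ — an affine copy of $Q$ — whence the fiber is (affinely) $Q^m$. If $x$ lies on the $i$-th facet of $P$ then $a_i x = 1$, so the $i$-th factor is cut out by $B y_i \le \zero$, which is the single point $\{0\}$ since the rows of $B$ positively span $\R^{d'}$ (equivalently, $Q^\circ$'s recession cone is trivial); the remaining factors are unaffected. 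This is exactly the claimed degeneration. If $x$ lies in several facets simultaneously, the corresponding several factors all collapse to points, consistent with the statement read factor-by-factor.

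I expect the only real obstacle to be the rigorous verification that every $h_{i,j}$ is facet-defining and that the facet list is complete (no other inequality is needed and none is redundant): the decoupling makes the fiber claims and boundedness essentially immediate, but pinning down the facets requires either a careful rank computation on the tight set of each $h_{i,j}$, or an appeal to the iterated-generalized-wedge description together with the facet count "$m - 1 + m'$ facets" recorded after Definition~\ref{def:GeneralizedWedge}, inducted over the $m$ steps — at step $i$ one removes the inequality $a_i x \le 1$ and adds the $m'$ inequalities $a_i x + b_j y_i \le 1$, so after all $m$ steps one has $m - m + mm' = mm'$ facets, matching. I would present the induction as the clean route and relegate the direct tight-set computation to a remark.
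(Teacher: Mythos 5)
Your proposal is correct and follows essentially the same route as the paper, which proves this proposition simply by declaring it ``analogous to the proof of Proposition~\ref{prop:GenWedgeProj}'': you carry out exactly that analogy blockwise (the positive combination $c$ with $cB=\rowzero$, $c\1=1$ recovering each $a_ix\le1$, and the decoupled fibers $By_i\le(1-a_ix)\1$), and in addition you supply the facet-counting and full-dimensionality details that the paper leaves implicit. The only blemishes are cosmetic: the phrase about ``the $A'$-rows carried implicitly'' is a misnomer (the wedge product has no $A'$ block; you correctly use all $m$ blocks later), and it is the recession cone of $Q$ itself, not of $Q^\circ$, that is trivial.
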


\begin{proof}
  This is analogous to the proof of Proposition~\ref{prop:GenWedgeProj}.
\end{proof}

\noindent In a ``fiber bundle'' interpretation, the situation might be denoted as
\[
(Q,\{{*}\})^m\ \ \longrightarrow\ \ P \wedgeproduct Q\ \ \longrightarrow\ \ (P,\{F_i\}_i).
\]
This picture has an analogy to MacPherson's topological description of the moment map $\mathbf{T}(P)\rightarrow P$ for a
toric variety, as presented in \cite{FischliYavin} and in \cite[Sect.~2.8]{MacPherson2004}.

We now give a purely combinatorial description of the faces of the wedge product. Any face $F$ of an arbitrary polytope
with $m$ facets $F_i$ ($i\in [m]$) may be identified with the set $\{ i\in[m]\ |\ F \subset F_i\}$ of 
indices of the facets that contain the face $F$. So we say that a set of facets $S \subset [m]$ ``is a face''
if and only if $S = \{ i\in[m]\ |\ F \subset F_i\}$ for some face~$F$. For the
wedge product $P \wedgeproduct Q$ each face is determined by a set of facets $h_{i,j}$, that is, 
it corresponds to a subset of
$[m]\times[m']$. Ordering this subset by the first index~$i$ the faces may be identified with a vector
$(H_0,\dots,H_{m-1})$ with $H_i \subseteq [m']$ in the following way:
\begin{equation}
  j \in H_i\ \Longleftrightarrow\ \text{$F$ lies on $h_{i,j}$}.\label{eq:FaceNotationOfWedgeProduct}
\end{equation}
In this correspondence, the facets of the wedge correspond to the $mm'$ ``unit coordinate vectors'' with one entry~$1$
and all other coordinates equal to~$0$. The vertices of a simple polytope $P$ correspond to vectors
with $\dim(P)$ ones and zeros otherwise.
With this notation we now describe the vertex facet incidences of the wedge product.

\begin{theorem}
  \label{prop:CombWedgeProducts}
  Let $P \wedgeproduct Q$ be the wedge product of polytopes $P$ and $Q$ with $m$ resp.~$m'$ facets. 
  Then $(H_0,\dots,H_{m-1})$ with $H_i \subseteq [m']$ corresponds to a vertex of $P \wedgeproduct Q$ if and only if:
  \begin{itemize}
  \item $\{ i \in [m]\ |\ H_i = [m'] \} \subseteq [m]$ corresponds to a vertex of $P$, and
  \item each $H_i$ with $H_i \neq [m']$ corresponds to a vertex of $Q$.
  \end{itemize}
\end{theorem}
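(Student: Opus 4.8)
The statement describes vertices of the wedge product $P\wedgeproduct Q$ in terms of the facet-incidence vectors $(H_0,\dots,H_{m-1})$, $H_i\subseteq[m']$. My plan is to translate the combinatorial condition back into the inequality description~\eqref{eq:wedgeProduct} and use the projection $P\wedgeproduct Q\to P$ from Proposition~\ref{prop:WPFiber} together with the fiber analysis already carried out there. The key observation is that a point $\binom{x}{y_0,\dots,y_{m-1}}$ lies on the facet $h_{i,j}$ precisely when $a_ix+b_jy_i=1$, so the set $H_i$ records which facet inequalities of $Q$ (read off in the $y_i$-block, but with right-hand side $\1-A_ix=\1-(a_ix)\1$) are tight. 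I would split the argument according to whether $a_ix=1$ (i.e.\ $x$ lies on the $i$-th facet $F_i$ of $P$) or $a_ix<1$.

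\smallskip

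\textbf{Step 1: the fiber picture.} Fix a point $\binom{x}{y_0,\dots,y_{m-1}}$ in $P\wedgeproduct Q$. By Proposition~\ref{prop:WPFiber} its image $x$ lies in $P$, the $i$-th block satisfies $(a_ix)\1+By_i\le\1$, i.e.\ $By_i\le(1-a_ix)\1$, and this is either a scaled copy of $Q$ (when $a_ix<1$) or the single point $By_i\le\zero$, forcing $y_i=0$ (when $a_ix=1$). In the first case, after dividing by $1-a_ix>0$, the tight inequalities among $By_i\le(1-a_ix)\1$ are exactly those of $B\bigl(\tfrac{y_i}{1-a_ix}\bigr)\le\1$, so $H_i=\{j: b_j\bigl(\tfrac{y_i}{1-a_ix}\bigr)=1\}$ is precisely the facet-set of the point $\tfrac{y_i}{1-a_ix}\in Q$; in the second case every inequality $b_jy_i\le1-a_ix=0$ is... not necessarily tight, in fact $0\le0$ means they are all tight, so $H_i=[m']$. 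Thus $H_i=[m']$ happens exactly when $x\in F_i$, and otherwise $H_i$ is the facet-set of a point of $Q$.

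\smallskip

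\textbf{Step 2: from faces to vertices.} A face of $P\wedgeproduct Q$ is the set of points where a prescribed collection of the $h_{i,j}$ are tight, and it is a \emph{vertex} iff that face is $0$-dimensional. Using Step~1, the face corresponding to $(H_0,\dots,H_{m-1})$ is the set of $\binom{x}{y_0,\dots,y_{m-1}}\in P\wedgeproduct Q$ with: for each $i$ with $H_i=[m']$, $x\in F_i$ and $y_i=0$; and for each $i$ with $H_i\ne[m']$, $\tfrac{y_i}{1-a_ix}$ lies in the face $H_i$ of $Q$ (and $a_ix<1$). The dimension of this set is governed by how much freedom is left for $x$ and for the $y_i$. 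The $x$ that satisfy $x\in F_i$ for all $i$ in $S:=\{i:H_i=[m']\}$ form the face of $P$ with facet-set $S$ — by definition of ``$S$ is a face of $P$'' — and over each such $x$ the $y_i$-coordinate for $i\notin S$ is constrained to the face $H_i$ of the scaled copy $(1-a_ix)Q$. Hence the whole face has dimension $\dim(\text{face }S\text{ of }P)+\sum_{i\notin S}\dim(\text{face }H_i\text{ of }Q)$, and this vanishes iff $S$ is a vertex of $P$ and each $H_i$ ($i\notin S$) is a vertex of $Q$ — which is exactly the claimed condition, since "$H_i$ corresponds to a vertex of $Q$" subsumes $H_i\ne[m']$ automatically (the improper face $[m']$ is not a vertex unless $Q$ is a point, an edge case handled by the same dimension count).

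\smallskip

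\textbf{Main obstacle.} The dimension-count in Step~2 is the delicate point: I must be careful that the map $\binom{x}{(y_i)}\mapsto\bigl(x,(\tfrac{y_i}{1-a_ix})_{i\notin S}\bigr)$ is well-defined and a bijection onto $(\text{face }S\text{ of }P)\times\prod_{i\notin S}(\text{face }H_i\text{ of }Q)$, and in particular that the scaling factors $1-a_ix$ stay strictly positive on the relative interior — the face could formally have $x$ ranging over a set on whose boundary some $1-a_ix$ vanishes, but then that boundary point forces a \emph{larger} index set $S$, i.e.\ a proper subface, so it does not affect the dimension. Making this "relative interior" bookkeeping precise, and checking the degenerate cases where $P$ or $Q$ is low-dimensional (so that $[m']$ could itself be a vertex, etc.), is the part that needs the most care; everything else is a direct unwinding of Proposition~\ref{prop:WPFiber} and the face-set conventions in~\eqref{eq:FaceNotationOfWedgeProduct}.
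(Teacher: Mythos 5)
The paper actually states this theorem without proof: the intended justification is implicit, either by iterating Lemma~\ref{lem:VerticesOfGeneralizedWedge} (vertices of a single generalized wedge, itself left unproved) facet by facet, or by invoking the duality with the wreath product of Joswig and Lutz noted in the preceding remark, where the dual statement is \cite[Cor.~2.4]{JoswigLutz2005}. Your argument is instead a direct verification from the inequality system \eqref{eq:wedgeProduct} via the fiber structure of Proposition~\ref{prop:WPFiber}, and it is essentially correct: the identification $H_i=[m']\Leftrightarrow x\in F_i$, the identification of $H_i$ with the facet-set of $y_i/(1-a_ix)$ in $Q$ otherwise, and the dimension formula $\dim F=\dim(\text{face }S\text{ of }P)+\sum_{i\notin S}\dim(\text{face }H_i\text{ of }Q)$ together give both directions of the equivalence. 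What your route buys is self-containedness (no appeal to the wreath-product literature) and, as a byproduct, the full face description rather than just the vertices. Two points deserve one more line each: (i) that $H_i=[m']$ \emph{forces} $a_ix=1$ requires knowing that no point of $Q$ lies on all facets of $Q$ when $\dim Q\ge1$; this follows from the positive dependence of the rows of $B$ (the same fact $cB=\rowzero$, $c>0$, used in the proof of Proposition~\ref{prop:GenWedgeProj}), since $b_jy=1$ for all $j$ would give $0=cBy=c\1=1$; and (ii) the claim that $y_i=0$ when $a_ix=1$ uses that the recession cone $\{y: By\le\zero\}$ of the bounded polytope $Q$ is trivial. With those two sentences added, the relative-interior bookkeeping you flag is routine and the proof is complete.
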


Certain faces in a wedge product $P \wedgeproduct Q$ that are affinely equivalent to~$P$ will be particularly
interesting to us.

\begin{proposition}
  \label{prop:SpecialFacesWedgeProducts}
  Let $(H_0,\dots,H_{m-1})$ with $H_i \subseteq [m']$ correspond to a face $F$ of the wedge product $P \wedgeproduct Q$.
  If the intersection of the facets $b_j y \le 1$ ($j \in H_i$) is a vertex $w_i$ of $Q$ for all $i \in [m]$, then $F$
  is affinely equivalent to $P$.
\end{proposition}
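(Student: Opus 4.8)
The plan is to follow the pattern of Proposition~\ref{prop:GenWedgePFace} and exhibit an explicit affine map $\phi\colon P\to P\wedgeproduct Q$ whose image is exactly $F$ and whose inverse is the coordinate projection $P\wedgeproduct Q\to P$ of Proposition~\ref{prop:WPFiber}; since the projection is linear, this yields an affine equivalence between $P$ and $F$.

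First I would rewrite the face $F$ as an equality system. By the face notation~\eqref{eq:FaceNotationOfWedgeProduct}, $F$ is cut out of $P\wedgeproduct Q$ by the hyperplanes $h_{i,j}$ with $j\in H_i$, that is, by the equations $a_ix+b_jy_i=1$ for $i\in[m]$ and $j\in H_i$. Collecting the rows $b_j$ with $j\in H_i$ into a matrix $\bar B_i$, and using that every row of $A_i$ equals $a_i$, this system becomes $\bar B_i\,y_i=(1-a_ix)\1$, one block per index $i$. The hypothesis says precisely that $\{y\in Q:b_jy=1\text{ for all }j\in H_i\}=\{w_i\}$; in particular $b_jw_i=1$ for $j\in H_i$, so $\bar B_iw_i=\1$ and $y_i=(1-a_ix)w_i$ solves the $i$-th block for every $x$. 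This defines the affine map
\[
\phi\colon x\longmapsto\bigl(x,\;(1-a_0x)w_0,\;\dots,\;(1-a_{m-1}x)w_{m-1}\bigr),
\]
which is injective because its first $d$ coordinates reproduce $x$.

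Next I would check that $\phi$ restricts to a bijection $P\to F$. For $\phi(P)\subseteq F$: if $x\in P$ then $1-a_ix\ge 0$, so scaling $Bw_i\le\1$ gives $By_i\le(1-a_ix)\1$, whence $\phi(x)$ satisfies all inequalities~\eqref{eq:wedgeProduct}; moreover $b_jw_i=1$ for $j\in H_i$ forces $a_ix+b_jy_i=1$ there, so $\phi(x)$ lies on all the prescribed facet hyperplanes and hence in $F$. For $F\subseteq\phi(P)$: any point $(x,y_0,\dots,y_{m-1})\in F$ has $x\in P$ by Proposition~\ref{prop:WPFiber}, so $1-a_ix\ge 0$; it satisfies $\bar B_iy_i=(1-a_ix)\1$ together with $By_i\le(1-a_ix)\1$, so when $1-a_ix>0$ the vector $y_i/(1-a_ix)$ lies in $Q$ and in $\{y:b_jy=1,\ j\in H_i\}=\{w_i\}$, while when $1-a_ix=0$ the bounded polytope $Q$ has trivial recession cone and $y_i=\zero$; in either case $y_i=(1-a_ix)w_i$, so the point equals $\phi(x)$. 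Thus $\phi$ and the projection are mutually inverse affine maps between $P$ and $F$.

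The one place that needs care, rather than a genuine obstacle, is the step that identifies $F$ with the solution set of exactly the equalities $a_ix+b_jy_i=1$, $j\in H_i$: this relies on the standard fact that a face of a polytope equals the intersection of the polytope with the facet hyperplanes containing it, which is what the bookkeeping convention~\eqref{eq:FaceNotationOfWedgeProduct} encodes. After that, everything is the blockwise repetition of the linear-algebra computation already carried out for the generalized wedge in Proposition~\ref{prop:GenWedgePFace}.
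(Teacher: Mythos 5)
Your proposal is correct and follows essentially the same route as the paper: both exhibit the explicit affine section $x\mapsto\bigl(x,(1-a_0x)w_0,\dots,(1-a_{m-1}x)w_{m-1}\bigr)$ obtained by solving the blockwise subsystems $\bar B_i y_i=(1-a_ix)\1$, inverse to the coordinate projection, exactly as in the generalized-wedge case of Proposition~\ref{prop:GenWedgePFace}. Your treatment of the degenerate fibers ($1-a_ix=0$, using boundedness of $Q$) is a detail the paper leaves implicit, but it is the same argument.
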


\begin{proof}
  Every $H_i$ gives rise to a submatrix $\bar B_i = (b_j)_{j \in H_i}$ of $B$, such that the vertex $w_i$ is the unique
  solution of $\bar B_i y_i = \1$. The corresponding subsystem of the inequality system of $P \wedgeproduct Q$:
  \[
  \left(
    \arraycolsep2pt
    \begin{array}{ccccc}
      \bar A_0    &  \bar B_0   &     &       &      \\
      \bar A_1    &      & \bar B_1  &       &      \\
      \vdots &      &     &\ddots &      \\
      \bar A_{m-1}&      &     &       & \bar B_{m-1}  
    \end{array}
  \right)
  \begin{pmatrix}
    x \\ y_0 \\ y_1 \\ \vdots \\ y_{m-1}
  \end{pmatrix}
  \le
  \begin{pmatrix}
    \1 \\ \1 \\ \vdots \\ \1
  \end{pmatrix}  
  \]
  defines a face of the wedge product, since it is tight at the point $(\rowzero,w_0,\dots,w_{m-1})^t$ on the boundary of
  $P \wedgeproduct Q$.  
  Analogous to the proof of Proposition~\ref{prop:GenWedgePFace}, every point $x \in P$ corresponds to a unique point
  $(x,y_0,\dots,y_{m-1})$ on $F$ where $y_i$ is the unique solution of the subsystem $\bar B_i y_i = 1 - \bar A_i x $.
  So $F$ is affinely equivalent to~$P$. 
\end{proof}

Using either duality and~\cite[Cor.~2.4]{JoswigLutz2005}, or
Corollary~\ref{cor:SimpleGenWedges}, we obtain the following characterization of simple wedge products.

\begin{corollary}\label{cor:SimpleWedgeProducts}
  The wedge product $P \wedgeproduct Q$ of two polytopes $P$ and $Q$ is simple if and only if 
  \begin{compactitem}
  \item $P$ is a point and $Q$ is simple (trivial case) or 
  \item $P$ is simple and $Q$ is a simplex.
  \end{compactitem}
\end{corollary}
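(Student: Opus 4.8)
The plan is to deduce the characterization of simple wedge products from the already-established characterization of simple \emph{generalized} wedges in Corollary~\ref{cor:SimpleGenWedges}, exploiting the fact that $P \wedgeproduct Q$ is built by iterating the generalized wedge construction over all facet hyperplanes of $P$. First I would observe that, up to a reordering of coordinates, there is a natural identification
\[
P \wedgeproduct Q\ \cong\ \bigl(\cdots\bigl((P \wedgeproduct_{F_0} Q)\wedgeproduct_{F_1'}Q\bigr)\cdots\bigr)\wedgeproduct_{F_{m-1}'}Q,
\]
where at the $i$-th step one forms the generalized wedge at the face of the partially-built polytope that is the preimage of the facet $F_i=\{a_ix=1\}$ of $P$ under the projection to the first $d$ coordinates. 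Indeed, comparing the inequality system~\eqref{eq:wedgeProduct} in Definition~\ref{def:WedgeProduct} with Definition~\ref{def:GeneralizedWedge} shows that each block $A_i\,x+B\,y_i\le\1$ is exactly the block added when wedging with $Q$ at the (pre)image of the $i$-th facet.

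The second step handles the ``if'' direction. If $P$ is a point, then $P\wedgeproduct Q = Q^m$ is a product of simple polytopes, hence simple; this is the trivial case. If $P$ is simple and $Q=\Delta_{q-1}$ is a simplex, I proceed by induction on $m$ using the iteration above: at each step we form a generalized wedge $P'\wedgeproduct_{F'}\Delta_{q-1}$ where $P'$ is simple (by the inductive hypothesis) and the second factor is a simplex, so Corollary~\ref{cor:SimpleGenWedges} applies and the result is again simple. After $m$ steps we conclude $P\wedgeproduct Q$ is simple. Alternatively — and this is the route the ``using duality'' remark points to — one invokes the remark identifying $P\wedgeproduct Q$ with the dual of the wreath product $Q^*\wr P^*$ together with \cite[Cor.~2.4]{JoswigLutz2005}, which characterizes when a wreath product is simplicial; dualizing that statement yields exactly the two cases here.

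For the ``only if'' direction I argue contrapositively: suppose neither listed case holds. If $P$ is not a point and $Q$ is not a simplex, I want to exhibit a non-simple vertex of $P\wedgeproduct Q$. Here Theorem~\ref{prop:CombWedgeProducts} gives the combinatorial description of vertices as tuples $(H_0,\dots,H_{m-1})$; the degree of such a vertex is the number of facets $h_{i,j}$ containing it, which by the face notation~\eqref{eq:FaceNotationOfWedgeProduct} equals $\sum_i |H_i|$. A vertex of a $(d+md')$-polytope is simple precisely when this sum equals $d+md'$. Taking a vertex of $P$ with index set $I\subseteq[m]$ of size $d$ (using that $P$ is simple if it is, or handling the general $P$ directly via its vertex degrees) and a vertex $w$ of $Q$ at which $Q$ fails to be simple — which exists iff $Q$ is not a simplex, when $Q$ is simple, or again handled by degree count — one assembles a tuple whose total size exceeds $d+md'$, contradicting simplicity. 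Symmetrically, if $P$ is a point but $Q$ is not simple, then $P\wedgeproduct Q=Q^m$ is not simple. The main obstacle is keeping the bookkeeping honest when $P$ itself is non-simple: one must phrase the count in terms of $\sum_i|H_i|$ versus $\dim(P\wedgeproduct Q)$ rather than assuming vertices of $P$ have exactly $d$ facets through them, but the degree inequality is robust enough to carry the argument.
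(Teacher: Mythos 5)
Your overall architecture is the one the paper itself points to: the paper derives this corollary by citing either duality together with \cite[Cor.~2.4]{JoswigLutz2005}, or Corollary~\ref{cor:SimpleGenWedges}, and your identification of $P\wedgeproduct Q$ as an iterated generalized wedge (matching the blocks of the system~\eqref{eq:wedgeProduct}), together with the induction for the ``if'' direction, is a correct elaboration of the second route. The framework you set up for the ``only if'' direction --- compare $\sum_i|H_i|$ with $\dim(P\wedgeproduct Q)=d+md'$ using Theorem~\ref{prop:CombWedgeProducts} --- is also sound, and is precisely the ``count the facets at each vertex'' method the paper invokes for Corollary~\ref{cor:SimpleGenWedges}.

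The one step that would fail as written is your choice of witness in the sub-case where $Q$ is simple but not a simplex (say $Q$ a square): you propose to take ``a vertex $w$ of $Q$ at which $Q$ fails to be simple,'' but no such vertex exists, and your parenthetical ``which exists iff $Q$ is not a simplex'' is false for such $Q$. The degree excess in that case has a different source, which you never name: for a vertex $(H_0,\dots,H_{m-1})$ lying over a vertex $v$ of $P$ whose facet-index set is $S=\{i\,:\,H_i=[m']\}$, each $i\in S$ contributes \emph{all} $m'$ facets $h_{i,j}$, so
\[
\sum_{i\in[m]} |H_i|\ \ge\ |S|\,m' + (m-|S|)\,d'\ =\ md' + |S|(m'-d')\ \ge\ md' + d(m'-d'),
\]
which (for $d\ge1$, i.e.\ $P$ not a point) strictly exceeds $d+md'$ as soon as $m'\ge d'+2$, that is, as soon as $Q$ is not a simplex; likewise it strictly exceeds $d+md'$ whenever $|S|>d$, i.e.\ whenever $v$ is a non-simple vertex of $P$. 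Writing this single inequality settles the entire ``only if'' direction and makes your case analysis unnecessary. Alternatively, you could simply run your own iteration backwards: the ``only if'' half of Corollary~\ref{cor:SimpleGenWedges} forces each intermediate base to be simple and the second factor to be a simplex, which descends to $P$ and $Q$.
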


\section{The polyhedral surfaces $\Sigma_{p,2q}$}
\label{sec:polyhedral-surfaces}

In this section we study a particularly interesting polytope, the wedge product of a $p$-gon with a
$(q-1)$-simplex. We will identify a polyhedral surface in its boundary complex, and show that after suitable deformation
this surface can survive projections to $\R^4$ resp.\ $\R^3$ for certain parameters $p$ and $q$. This yields
realizations of the surfaces, the ``prisms'' over the surfaces, and of the dual surfaces.
 
\subsection{The wedge product of {$p$}-gon and {$(q-1)$}-simplex}
\label{sec:wedge-product-pq}

For $p\ge3$ and $q\ge1$,
the wedge product of a $p$-gon $C_p$ and a $(q-1)$-simplex $\simplex{q-1}$ will be denoted by
\[
\wp{p}{q-1}\ :=\ C_p \wedgeproduct \simplex{q-1}.
\]
This is a $(2 + p(q-1))$-dimensional polytope with $pq$ facets. By Corollary~\ref{cor:SimpleWedgeProducts}
it is simple.

Let us first fix some notation. We assume that the facets of $C_p$ are labeled in cyclic order, that is,
if $i,i'\in \Zp$ are indices of facets of $C_p$, then they intersect in a vertex of the $p$-gon if and only if $i'
\equiv i \pm 1 \mod p$. For $j \in \Zq$ we denote by $\overline{j}$ the set complement $\Zq \setminus j$ of $j$ in $\Zq$. A
vertex of the $(q-1)$-simplex $\simplex{q-1}$ is the intersection of any $q-1$ of its facets, hence for $j \in \Zq$ the
intersection $\bigcap_{j' \in \overline{j}} F_{j'}$ of the facets $F_{j'}$ ($j'\in\overline{j}$) of the simplex is a vertex. 
Theorem~\ref{prop:CombWedgeProducts} specializes as follows.
\begin{corollary}[Vertices of wedge product $\wp{p}{q-1}$]
  \label{cor:VerticesWP}
  Let $\wp{p}{q-1}$ be the wedge product of $p$-gon and $(q-1)$-simplex. Then the vertices of the wedge product
  $\wp{p}{q-1}$ correspond to the vectors $(H_0,\dots,H_{p-1})$ 
  with
  \begin{equation}
  (H_0,\dots,H_{p-1}) = 
  \begin{cases}
    (\overline{j_0},\dots,\overline{j_{i-1}},\Zq,\Zq,\overline{j_{i+2}},\dots,\overline{j_{p-1}}),&\text{\ or}\\
    (\Zq,\overline{j_1},\overline{j_2},\dots\dots\dots,\overline{j_{p-3}},\overline{j_{p-2}},\Zq)
  \end{cases}
  \end{equation}
  with $j_i \in [q]$ and $\overline{j_i} = \Zq \setminus\{j_i\}$. In other words, each vector that corresponds to a
  vertex has two cyclically adjacent $\Zq$ entries while all other entries are $q-1$ element subsets of~$\Zq$. The
  number of vertices is $p q^{p-2}$.
\end{corollary}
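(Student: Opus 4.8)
The plan is to specialize Theorem~\ref{prop:CombWedgeProducts} to the case $P = C_p$ and $Q = \simplex{q-1}$, using the combinatorics of these two particularly simple polytopes. First I recall from Theorem~\ref{prop:CombWedgeProducts} that a vector $(H_0,\dots,H_{p-1})$ with $H_i \subseteq [q]$ corresponds to a vertex of $\wp{p}{q-1}$ precisely when (a) the index set $I := \{ i \in [p] \mid H_i = [q] \}$ corresponds to a vertex of $C_p$, and (b) every $H_i$ with $H_i \neq [q]$ corresponds to a vertex of $\simplex{q-1}$.

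Now I would unpack the two conditions. Since $C_p$ is a $p$-gon with facets labeled cyclically, a subset $I \subseteq [p]$ ``is a vertex'' of $C_p$ iff it consists of exactly two cyclically adjacent indices, i.e.\ $I = \{i, i+1 \bmod p\}$ for some $i$; that is the content of the labeling convention fixed just before the corollary. For the simplex, a subset $H \subseteq [q]$ ``is a vertex'' of $\simplex{q-1}$ iff $H = \overline{j} = [q]\setminus\{j\}$ for some $j\in[q]$, since a vertex of the $(q-1)$-simplex is the intersection of exactly $q-1$ of its $q$ facets (as noted before the corollary). Plugging these descriptions into (a) and (b): the two coordinates indexed by the cyclically adjacent pair $I$ must equal $[q]$, and each of the remaining $p-2$ coordinates must be of the form $\overline{j_i}$ for some $j_i \in [q]$. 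Splitting on whether the adjacent pair $I$ ``wraps around'' the cyclic index $0$ (i.e.\ $I = \{p-1,0\}$) or not gives exactly the two displayed cases in the corollary statement, so I would just observe that the two-line case distinction is a faithful transcription of ``$I$ is a pair of cyclically adjacent indices.''

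Finally I would count. There are $p$ choices for the cyclically adjacent pair $I$ of $\Zq$-entries (equivalently, $p$ choices of $i$), and then $q$ independent choices of $j_i \in [q]$ for each of the $p-2$ remaining coordinates, giving $p \cdot q^{p-2}$ vectors; distinct vectors give distinct vertices, so the vertex count is $p q^{p-2}$.

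I do not expect any genuine obstacle here — the statement is a direct specialization of Theorem~\ref{prop:CombWedgeProducts}. The only point requiring a little care is making sure the ``is a face/vertex'' identification is applied correctly to $C_p$ and $\simplex{q-1}$ (cyclic adjacency for the polygon, complements of singletons for the simplex) and that the case split in the displayed formula really does enumerate all cyclically adjacent pairs exactly once, including the wrap-around pair; once that bookkeeping is in place, the counting is immediate.
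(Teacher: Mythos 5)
Your proposal is correct and follows exactly the route the paper intends: the paper gives no separate proof of this corollary, merely stating that Theorem~\ref{prop:CombWedgeProducts} ``specializes as follows,'' and your unpacking of that specialization (cyclically adjacent facet pairs for the vertices of $C_p$, complements of singletons for the vertices of $\simplex{q-1}$, then the count $p\cdot q^{p-2}$) is precisely the intended argument.
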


For the construction of the surface in the next section we are interested in the $p$-gon faces of $\wp{p}{q-1}$ that we
obtain from Proposition~\ref{prop:SpecialFacesWedgeProducts}.

\begin{corollary}[$p$-gon faces of the wedge product $\wp{p}{q-1}$]
  \label{cor:pGonsOfWP} 
  The faces of the wedge product $\wp{p}{q-1}$ of $p$-gon and $(q-1)$-simplex corresponding to the vectors
  \[
  (H_0,\dots,H_{p-1})\ =\ (\overline{j_0},\dots,\overline{j_{p-1}}),
  \] 
  where $j_k \in \Zq$ and $\overline{j_{k}} = \Zq \setminus j_k$ are $p$-gons. The number of such $p$-gons in
  $\wp{p}{q-1}$ is $q^p$.
\end{corollary}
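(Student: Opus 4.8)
The plan is to apply Proposition~\ref{prop:SpecialFacesWedgeProducts} with $P = C_p$ and $Q = \simplex{q-1}$, and then to count the resulting faces directly. First I would recall from the setup preceding Corollary~\ref{cor:VerticesWP} that a vertex of $\simplex{q-1}$ is obtained by intersecting exactly $q-1$ of its $q$ facets, so the vertices of $\simplex{q-1}$ are precisely the sets $\overline j = \Zq\setminus\{j\}$ for $j\in\Zq$. Hence, given a vector $(H_0,\dots,H_{p-1}) = (\overline{j_0},\dots,\overline{j_{p-1}})$ with each $j_k\in\Zq$, each $H_i$ is a $(q-1)$-element subset of $\Zq$, and the intersection of the facets $b_jy\le 1$ for $j\in H_i = \overline{j_i}$ is exactly the vertex $w_i$ of $\simplex{q-1}$ dual to the missing index $j_i$. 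This is exactly the hypothesis of Proposition~\ref{prop:SpecialFacesWedgeProducts}, so the corresponding face $F$ of $\wp{p}{q-1}$ is affinely equivalent to $P = C_p$, i.e.\ it is a (convex) $p$-gon.

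Next I would verify that such a vector does indeed correspond to a genuine (nonempty) face, not to the empty set, and that distinct vectors $(\overline{j_0},\dots,\overline{j_{p-1}})$ give distinct $p$-gons. Nonemptiness is already built into the statement of Proposition~\ref{prop:SpecialFacesWedgeProducts} (its conclusion asserts $F$ is affinely equivalent to $P$, hence nonempty); alternatively, the proof there exhibits the boundary point $(\rowzero, w_0,\dots,w_{m-1})$ at which the subsystem is tight. For distinctness, I would note that the face-to-vector correspondence of \eqref{eq:FaceNotationOfWedgeProduct} is injective on faces — a face is recovered as the intersection of exactly the facets $h_{i,j}$ with $j\in H_i$ — and since $j_i$ is determined by $H_i = \overline{j_i}$, the assignment $(j_0,\dots,j_{p-1})\mapsto F$ is injective. (One should also check these vectors all satisfy the vertex-incidence constraints implicitly, but as $p$-gon faces they clearly do; the $2$-dimensional faces of a simple polytope are well behaved.)

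For the count: a vector of the stated form is specified by choosing $j_k\in\Zq$ independently for each $k\in\{0,\dots,p-1\}$, giving $q$ choices in each of $p$ coordinates, hence $q^p$ vectors, and by the injectivity just discussed these index $q^p$ pairwise distinct $p$-gon faces. This yields the claimed number $q^p$.

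The only real subtlety — and the step I'd flag as needing a sentence of care rather than being automatic — is confirming that \emph{every} such vector actually arises as a face (rather than the subsystem being, say, tight only at a point that happens not to be a vertex configuration compatible with the combinatorial description). This is handled cleanly by Proposition~\ref{prop:SpecialFacesWedgeProducts}, whose hypothesis we have verified holds for all $q^p$ choices, so no face is lost; thus the enumeration is exact and not merely an upper bound. Everything else is bookkeeping with the notation of \eqref{eq:FaceNotationOfWedgeProduct}.
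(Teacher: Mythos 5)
Your proposal is correct and follows exactly the route the paper intends: the corollary is stated as an immediate consequence of Proposition~\ref{prop:SpecialFacesWedgeProducts} applied with $P=C_p$ and $Q=\simplex{q-1}$, using the fact that each $(q-1)$-subset $\overline{j_i}\subset\Zq$ determines a vertex of the simplex, and the count $q^p$ is just the number of independent choices of the $j_k$. Your additional remarks on nonemptiness and injectivity are correct and harmless elaborations of what the paper leaves implicit.
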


\subsection{Combinatorial construction}
\label{sec:CombinatorialConstruction}

In this section we describe a combinatorially regular surface of type~$\{p,2q\}$, that is, a surface composed of $p$-gon
faces, whose vertices have uniform degree $2q$, and with a combinatorial automorphism group that acts transitively on
its flags. It will be a subcomplex of the $p$-gons of the wedge product~$\wp{p}{q-1} = C_p \wedgeproduct \simplex{q-1}$
of $p$-gon and $(q-1)$-simplex defined in the previous Section~\ref{sec:wedge-product-pq}.

To construct the surface we have to select certain of the $p$-gon faces of the wedge product. By
Corollary~\ref{cor:pGonsOfWP} we know that the $p$-gon faces of $\wp{p}{q-1}$ correspond to vectors
$(\overline{j_0},\ldots,\overline{j_{p-1}})$ with $\overline{j_{k}} = \Zq \setminus j_k$.

\begin{definition}[polytopal subcomplex $\WPsurf{p}{2q}$ of $\wp{p}{q-1}$]
\label{def:WedgeProductSurface}
For $p\ge3$ and $q\ge2$, 
the subcomplex $\WPsurf{p}{2q}$ is defined by the $p$-gon faces of the wedge product $\wp{p}{q-1}$ that correspond to
the following set of vectors:
\[
\WPsurf{p}{2q}\ =\ \Big\{(\,\overline{j_0},\ldots,\overline{j_{p-1}}\,)\ \Big|\ 
                             \sum_{k=0}^{p-1} j_k \equiv 0\textrm{ or }1 \mod q\Big\}.
\]
The subcomplex consists of all these $p$-gons, their edges and vertices. 
\end{definition}

Let us start with an easy observation on the faces contained in the subcomplex~$\WPsurf{p}{2q}$.

\begin{lemma}[Vertices and edges of~$\WPsurf{p}{2q}$]
  \label{lem:VertsAndEdgesOfWPsurf}
  The subcomplex~$\WPsurf{p}{2q}$ contains \emph{all} the vertices of~$\wp{p}{q-1}$.  It contains all the edges of type
  $(\,\overline{j_0},\dots,\overline{j_{k_0-1}},[q],\overline{j_{k_0+1}},\dots,\overline{j_{p-1}}\,)$.
  Thus the $f$-vector of~$\WPsurf{p}{2q}$ is given by
  \[
  (f_0,f_1,f_2)\ =\ (p ,p q, 2 q)q^{p-2}.
  \]   
\end{lemma}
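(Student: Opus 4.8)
The plan is to verify the three claims --- the vertex count, the edge description, and the $f$-vector --- by combining the combinatorial descriptions already established. First I would recall from Corollary~\ref{cor:VerticesWP} that every vertex of $\wp{p}{q-1}$ has the shape $(H_0,\dots,H_{p-1})$ with exactly two cyclically adjacent entries equal to $\Zq$ and the rest equal to $(q-1)$-element subsets $\overline{j_k}$. Such a vertex lies in a $p$-gon face $(\overline{j_0},\dots,\overline{j_{p-1}})$ precisely when that $p$-gon's defining vector refines the vertex vector --- i.e.\ agrees in all coordinates that are not $\Zq$ at the vertex. Given a vertex with $\Zq$ in cyclically adjacent positions $k_0,k_0+1$, the containing $p$-gons are exactly those obtained by replacing those two $\Zq$ entries with arbitrary $\overline{j_{k_0}},\overline{j_{k_0+1}}$; there are $q^2$ such $p$-gons, and the subcomplex condition $\sum j_k\equiv 0$ or $1$ picks out those for which $j_{k_0}+j_{k_0+1}$ lands in one of two prescribed residues mod $q$, which is always solvable (in fact in $2q$ ways). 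Hence every vertex of $\wp{p}{q-1}$ lies in at least one $p$-gon of $\WPsurf{p}{2q}$, proving the first assertion.

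Next I would analyse the edges. An edge of $\wp{p}{q-1}$, viewed in the facet-set notation, is a vector $(H_0,\dots,H_{p-1})$ with $\dim(\wp{p}{q-1})-1 = 1+p(q-1)$ ones in total. Since each $\overline{j_k}$ contributes $q-1$ facets and each $\Zq$ contributes $q$, the edges of the stated type $(\overline{j_0},\dots,\overline{j_{k_0-1}},[q],\overline{j_{k_0+1}},\dots,\overline{j_{p-1}})$ have exactly $(p-1)(q-1)+q = 1+p(q-1)$ facets, so they are candidates for edges; one then checks via Theorem~\ref{prop:CombWedgeProducts} that such a vector is indeed a face (it has one $\Zq$ block, and all other blocks are vertices of $\simplex{q-1}$, so it is a face; a dimension count or the observation that it lies between two vertices confirms it is an edge). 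This edge lies in the $p$-gon $(\overline{j_0},\dots,\overline{j_{k_0-1}},\overline{j_{k_0}},\overline{j_{k_0+1}},\dots,\overline{j_{p-1}})$ for each choice of $j_{k_0}\in\Zq$, and it lies in $\WPsurf{p}{2q}$ iff at least one such $p$-gon does, i.e.\ iff some $j_{k_0}$ makes $\sum_k j_k\equiv 0$ or $1$; since $j_{k_0}$ ranges over all of $\Zq$ this is automatic. So all edges of that type lie in the subcomplex, and conversely the edges of a $p$-gon face of $\wp{p}{q-1}$ are obtained from its vector by enlarging one $\overline{j_k}$ to $\Zq$, so these are exactly the edges appearing in $\WPsurf{p}{2q}$.

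For the $f$-vector, I would count directly. The number of $2$-faces is the number of $p$-gons in the subcomplex: vectors $(\overline{j_0},\dots,\overline{j_{p-1}})$ with $\sum j_k\equiv 0$ or $1\pmod q$; for each of the $q^{p-1}$ choices of $(j_0,\dots,j_{p-2})$ there are exactly two admissible $j_{p-1}$, giving $f_2 = 2q^{p-1} = 2q\cdot q^{p-2}$. For $f_1$, each $p$-gon has $p$ edges and each edge (of the type above, with the $\Zq$ in position $k_0$) lies in exactly two of our $p$-gons --- the two values of $j_{k_0}$ that achieve the two residues $0$ and $1$ --- so $f_1 = \tfrac{p}{2}f_2 = pq\cdot q^{p-2}$. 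For $f_0$, by the first part $\WPsurf{p}{2q}$ contains all vertices of $\wp{p}{q-1}$, and Corollary~\ref{cor:VerticesWP} gives that number as $pq^{p-2}$. The main obstacle I anticipate is not any single step but getting the incidence multiplicities exactly right --- in particular pinning down that each admissible edge lies in precisely two subcomplex $p$-gons (rather than one or more) and that each admissible $p$-gon lies in the subcomplex with no overcounting --- since these multiplicities are what make the Euler characteristic come out to that of a surface and underlie the later claim that $\WPsurf{p}{2q}$ is an equivelar surface of type $\{p,2q\}$.
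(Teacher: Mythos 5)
Your proof is correct; the paper states this lemma without any proof (it is introduced as ``an easy observation''), and your argument---reading off face containment from the facet-index vectors of Corollaries~\ref{cor:VerticesWP} and~\ref{cor:pGonsOfWP}, checking that every vertex lies in $2q$ of the selected $p$-gons and every candidate edge in exactly two, and counting---is evidently the intended verification. The only small imprecision is the appeal to Theorem~\ref{prop:CombWedgeProducts} to certify the edge vectors as faces, since that theorem characterizes vertices only; the clean fix is the one you gesture at, namely that $\wp{p}{q-1}$ is simple, so the faces through a vertex correspond bijectively to subsets of the $2+p(q-1)$ facets through it, and your candidate edge vector is sandwiched between a vertex vector and a $p$-gon vector with the right number of facets.
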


In the following we will prove that the polytopal complex $\WPsurf{p}{2q}$ is a regular surface. We start by proving the
regularity.

\begin{proposition}[Regularity of the polytopal complex $\WPsurf{p}{2q}$]
\label{prop:RegularityOfWPsurf}
  The polytopal complex $\WPsurf{p}{2q}$ is regular, i.e. the combinatorial automorphism group acts transitively on its
  flags.
\end{proposition}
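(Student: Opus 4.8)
The plan is to exhibit an explicit group of combinatorial automorphisms of $\WPsurf{p}{2q}$ and show it acts transitively on flags. A flag of the surface consists of a vertex, an incident edge, and an incident $p$-gon. By Corollary~\ref{cor:pGonsOfWP} the $p$-gons are indexed by vectors $(\overline{j_0},\dots,\overline{j_{p-1}})$ with $\sum j_k \equiv 0$ or $1 \pmod q$, by Lemma~\ref{lem:VertsAndEdgesOfWPsurf} the edges are the vectors with exactly one entry equal to $[q]$, and the vertices are the vectors with two cyclically adjacent $[q]$ entries (Corollary~\ref{cor:VerticesWP}). The group I would use is generated by three kinds of maps acting on the index vectors: (i) the cyclic shift $\rho$ sending $(H_0,\dots,H_{p-1}) \mapsto (H_{p-1},H_0,\dots,H_{p-2})$, coming from the rotational symmetry of the $p$-gon $C_p$; (ii) a ``reflection'' $\sigma$ reversing the cyclic order of the coordinates, coming from a reflection of $C_p$; and (iii) ``diagonal translations'' $\tau_a$ for $a \in \Z_q$ that act on each coordinate simultaneously by $\overline{j_k} \mapsto \overline{j_k + a}$ — i.e.\ they permute the facets of the simplex $\simplex{q-1}$ by the cyclic shift $j \mapsto j+a$, applied in lockstep across all $p$ simplex factors. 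One also has the full symmetric group $S_q$ acting diagonally on simplex-facet labels, which together with the geometric symmetries of $C_p$ generates the relevant group; I would keep track only of the subgroup needed for flag-transitivity.

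The key steps, in order: First, check that each generator actually maps $\WPsurf{p}{2q}$ to itself. For $\rho$ and $\sigma$ this is clear because $\sum_k j_k \bmod q$ is invariant under permuting the summands. For the diagonal shift $\tau_a$, note that $(\overline{j_0},\dots,\overline{j_{p-1}}) \mapsto (\overline{j_0+a},\dots,\overline{j_{p-1}+a})$ changes the sum from $s$ to $s + pa \pmod q$; this does not obviously preserve the condition ``$s \equiv 0$ or $1$'', so here I would instead use the \emph{single-coordinate} shift $\tau^{(k)}_a\colon \overline{j_k}\mapsto \overline{j_k+a}$ in one factor only — but that is not a symmetry of the simplex product in a way that fixes the surface either, since it changes $s$ by $a$. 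The honest generators that preserve the surface are: all of $\rho$, $\sigma$; the diagonal $S_q$-action; and, crucially, the coordinatewise ``negation together with a shift by $1$ in a distinguished coordinate'' — more cleanly, the map that sends each $j_k \mapsto -j_k$ (reversing all simplex labels via $j \mapsto -j$, i.e.\ the longest element composed appropriately) combined with shifting by $1$ in exactly one coordinate, which swaps the residue classes $0 \leftrightarrow 1$. I would verify each preserves the defining congruence.

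Second, having a group $G$ acting on the complex, I would prove flag-transitivity in the standard three stages. (a) $G$ is transitive on $p$-gons: given two vectors $(\overline{j_k})$, $(\overline{j'_k})$ both satisfying the congruence, use $\rho$ to fix a base cyclic position, use the diagonal $S_q$-action to adjust labels, and use the residue-swap to pass between the classes $0$ and $1$; the remaining freedom (adjusting individual $j_k$ while keeping $\sum j_k$ fixed) is handled by a product of ``transvection-like'' moves that each shift $j_k$ up by $1$ and some $j_{k'}$ down by $1$ — such moves are compositions of two single-coordinate shifts and lie in $G$ because the \emph{net} change to $\sum j_k$ is zero, so the surface is preserved. (b) The stabilizer of a fixed $p$-gon acts transitively on its $p$ edges (its entries eligible to become $[q]$): this is the dihedral action of $\langle \rho,\sigma\rangle$ restricted appropriately, after noting which coordinates can legitimately be promoted to $[q]$ — by Lemma~\ref{lem:VertsAndEdgesOfWPsurf} all $p$ of them can. (c) The stabilizer of a fixed edge acts transitively on its two endpoints: the reflection $\sigma$ about that edge swaps the two adjacent $[q]$-positions, hence swaps the two vertices.

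The main obstacle I anticipate is step (a) — pinning down exactly which relabelings of simplex facets, performed simultaneously or in single factors, preserve the defining congruence $\sum j_k \in \{0,1\}$, and then assembling enough of them to reach transitivity on $p$-gons without accidentally leaving the complex. The subtlety is that the diagonal shift by $a$ changes $\sum j_k$ by $pa$, so it only preserves the surface when $pa \equiv 0 \pmod q$ or when composed with a compensating residue-class swap; meanwhile single-coordinate shifts change the sum freely but must be used in canceling pairs. I expect the clean statement is that $G$ is generated by $\langle \rho, \sigma\rangle \ltimes K$, where $K$ is the stabilizer-in-the-$S_q^p$-action of the function $(j_k) \mapsto \sum j_k \bmod q$ up to the $0\leftrightarrow 1$ swap, and that a short combinatorial argument (essentially: the difference of two admissible vectors can be written as a sum of unit vectors $e_k - e_{k'}$ plus possibly one global shift realizing the residue swap) shows $G$ is already flag-transitive. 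Once the generators are correctly identified, stages (b) and (c) are immediate from the dihedral symmetry of $C_p$.
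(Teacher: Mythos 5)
Your proposal follows essentially the same route as the paper: the generators you settle on are exactly the paper's $\mathsf{S}$ (cyclic shift), $\mathsf{F}$ (flip), $\mathsf{P}$ (the label-reversal combined with a shift by $1$ in one coordinate, swapping the residue classes $0\leftrightarrow 1$), and $\mathsf{R}$ (your sum-preserving ``transvections'' $j_k\mapsto j_k+1$, $j_{k'}\mapsto j_{k'}-1$), and you correctly identify the one subtle point, namely that global relabelings of the simplex facets generally do not preserve the congruence $\sum j_k\equiv 0,1\bmod q$. The only organizational difference is that you assemble flag-transitivity via the standard face/edge/vertex stabilizer chain, whereas the paper writes down an explicit word in $\mathsf{S}$ and $\mathsf{R}$ carrying the base flag to an arbitrary flag; both work.

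One small correction: the full diagonal $S_q$-action does \emph{not} preserve $\WPsurf{p}{2q}$ and should be struck from your generator list. For instance with $p=q=3$ the transposition $0\leftrightarrow 1$ sends the admissible vector $(\overline{0},\overline{0},\overline{1})$ (sum $1$) to $(\overline{1},\overline{1},\overline{0})$ (sum $2$), which is not in the complex. Your argument does not actually use it --- the parity map plus the transvections already suffice for transitivity on $p$-gons, as your final paragraph observes --- so this is a redundancy to delete rather than a gap to fill.
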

\begin{proof}
  We use four special combinatorial automorphisms of the subcomplex to show that the flag $\mathcal{F}_0:
  (\Zq,\Zq,\overline{0},\dots, \overline{0}) \subseteq (\Zq,\overline{0},\overline{0},\dots, \overline{0}) \subseteq
  (\overline{0},\overline{0},\overline{0},\dots, \overline{0})$ may be mapped onto any other flag.
  Acting on (index vectors of) vertices, they may be described as follows:
  \begin{align*}
    \mathsf{F}:\ &(\,\overline{j_0},\overline{j_1},\overline{j_2}, \dotsc, \overline{j_{p-1}}\,)\ \longmapsto\ 
    (\,\overline{j_{p-1}}, \dotsc , \overline{j_2},\overline{j_1},\overline{j_{0}}) &(\text{Flip})\\
    \mathsf{P}:\ &(\,\overline{j_0},\overline{j_1},\overline{j_2}, \dotsc, \overline{j_{p-1}}\,)\ \longmapsto\ 
    (\,\overline{q-j_{0}+1}, \overline{q-j_{1}}, \dotsc, \overline{q-j_{p-1}}\,) &(\text{Parity})\\
    \mathsf{R}:\ &(\,\overline{j_0},\overline{j_1},\overline{j_2},\ldots,\overline{j_{p-1}}\,)\ \longmapsto\ 
    (\,\overline{j_0+1},\overline{j_1-1},\overline{j_2}, \dotsc, \overline{j_{p-1}}\,)&(\text{Rotate})\\
    \mathsf{S}:\ &(\,\overline{j_0},\overline{j_1},\overline{j_2},\ldots,\overline{j_{p-1}}\,)\ \longmapsto\
    (\,\overline{j_{1}},\overline{j_2},\dotsc,\overline{j_{p-1}},\overline{j_0}\,)&(\text{Shift})
  \end{align*}
  All four maps act on the vectors representing the faces of the subcomplex $\WPsurf{p}{2q}$. 
  The map $\mathsf{P}$ changes the parity of the $p$-gon (that is, the value of $\sum j_k$ mod $2$),
  $\mathsf{S}$ shifts the vector cylically, $\mathsf{F}$ reverses the order of the vector, and $\mathsf{R}$ rotates
  around a vertex preserving parity. Hence by applying an appropriate combination of $\mathsf{F}$, $\mathsf{S}$, and
  $\mathsf{P}$ we may map an arbitrary flag to a flag of the type
  \[
  \mathcal{F} = 
  (\,\Zq,\Zq,\overline{j_2},\dots, \overline{j_{p-1}}\,) 
  \subseteq (\,\Zq,\overline{j_1},\overline{j_2},\dots, \overline{j_{p-1}}\,)
  \subseteq (\,\overline{j_{0}},\overline{j_{1}},\overline{j_{2}},\dots, \overline{j_{p-1}}\,).
  \]
  with $\sum j_i \equiv 0 \mod q$.  The two maps $\mathsf{S}$ and $\mathsf{R}$ do not change the parity of the $p$-gon.
  If we now apply the following sequence of $\mathsf{S}$ and $\mathsf{R}$ to the flag $\mathcal{F}_0$ we obtain the flag
  $\mathcal{F}$:
  \[
  \mathcal{F} = (\mathsf{S} (\mathsf{S} \mathsf{R}^{s_{p-2}}) (\mathsf{S} \mathsf{R}^{s_{p-3}}) \cdots (\mathsf{S}
  \mathsf{R}^{s_1}) (\mathsf{S} \mathsf{R}^{s_0}))(\mathcal{F}_0)
  \]
  where $s_{\l} = \sum_{k=0}^{\l} j_k$. Each of the $\mathsf{S}\mathsf{R}$ pairs adjusts one of the entries of the flag,
  and the entire sequence maps $\mathcal{F}_0$ to $\mathcal{F}$.
\end{proof}

\begin{remark}
  The symmetry group $\Aut{\WPsurf{p}{2q}}$ of the surface $\WPsurf{p}{2q}$ has order $4p q^{p-1}$.
  It is a quotient of the symmetry group $[p,2q]$
  of the universal (simply-connected) regular tiling of type $\{p,2q\}$, i.e.\ with $p$-gon faces and uniform vertex degree $2q$.
  (Depending on the parameters $p$ and $q$ these tilings are euclidean, spherical, or hyperbolic.) The group
  $\Aut{\WPsurf{p}{2q}}$ is generated by ``combinatorial reflections'' at the lines bounding a fundamental triangle of
  the barycentric subdivision of the surface. Our group $\Aut{\WPsurf{p}{2q}}$ is also a quotient of $G^{p,2q,r}$,
the quotient of $[p,2q]$ studied by Coxeter~\cite{Coxeter1939}, for suitable parameters~$r$.
\end{remark}

We are now able to prove the following result on the structure of our selected subcomplex.

\begin{theorem}[Properties of $\WPsurf{p}{2q}$]
  \label{thm:SurfaceInWedgeProduct}
  The subcomplex $\WPsurf{p}{2q}$ of the wedge product $\wp{p}{q-1} = C_p \wedgeproduct \simplex{q-1}$ of a $p$-gon and
  a $(q-1)$-simplex is a closed connected orientable regular $2$-manifold of type $\{p,2q\}$ with $f$-vector
  \[
  f(\WPsurf{p}{2q}) = (p, pq, 2q)q^{p-2}
  \]
  and genus $1+\tfrac{1}{2} q^{p-2}(pq - p - 2q)$.
\end{theorem}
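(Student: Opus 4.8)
The plan is to establish the four asserted properties of $\WPsurf{p}{2q}$ in turn: that it is a closed $2$-manifold, that it is connected, that it is orientable, and that it has the stated genus. The type $\{p,2q\}$ is already essentially recorded in Lemma~\ref{lem:VertsAndEdgesOfWPsurf} (faces are convex $p$-gons) together with the regularity statement of Proposition~\ref{prop:RegularityOfWPsurf}, so the main new content is the manifold property. First I would verify that $\WPsurf{p}{2q}$ is a pure $2$-dimensional polytopal complex in which every edge lies in exactly two $p$-gons and the link of every vertex is a single cycle. By regularity (Proposition~\ref{prop:RegularityOfWPsurf}) it suffices to check this at one edge and one vertex, say the reference edge $(\,\overline{j_0},\dots,\overline{j_{k_0-1}},[q],\overline{j_{k_0+1}},\dots,\overline{j_{p-1}}\,)$ and the reference vertex $(\Zq,\Zq,\overline{0},\dots,\overline{0})$. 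For the edge: such an edge sits in a $p$-gon $(\,\overline{j_0},\dots,\overline{j_{p-1}}\,)$ precisely when $j_{k_0}\in[q]$ is chosen so that $\sum_k j_k \equiv 0$ or $1 \bmod q$; the two congruence classes give exactly two admissible values of $j_{k_0}$, hence exactly two $p$-gons on the edge. For the vertex: by Lemma~\ref{lem:VertsAndEdgesOfWPsurf} all $pq$ incident edges of the wedge product at a vertex of type ``adjacent $\Zq$ entries'' — actually one must count the edges of the surface at the vertex, which is $2q$ by the $f$-vector, and then check these $2q$ edges and $2q$ $p$-gons close up into one cycle rather than several; this cyclic closure follows from the action of the rotation automorphism $\mathsf{R}$ (or its conjugates) which cyclically permutes the $2q$ flags at the vertex transitively.

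Next, connectedness: I would argue directly on the vector description. Given two $p$-gons $(\,\overline{j_0},\dots,\overline{j_{p-1}}\,)$ and $(\,\overline{j_0'},\dots,\overline{j_{p-1}'}\,)$ in $\WPsurf{p}{2q}$, one moves from one to the other by a sequence of edge-flips. An edge-flip across $(\,\dots,[q]\text{ in slot }k,\dots\,)$ replaces $j_k$ by the unique other value $j_k'$ with $j_k+j_k'$ fixed $\bmod q$ (it swaps the residue $0\leftrightarrow 1$); combining flips in two different slots $k,k'$ lets one adjust $j_k$ and $j_{k'}$ by any pair summing to zero mod $q$, i.e. change $j_k$ freely while compensating in slot $k'$. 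Since $p\ge 3$, this suffices to reach any target vector with the same (or either) parity residue, so the complex is connected. (Alternatively, connectedness is immediate from the existence of a flag-transitive automorphism group acting on a nonempty complex with the link condition verified above.)

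For orientability I would exhibit a consistent orientation of the $p$-gons. Orient the $p$-gon $(\,\overline{j_0},\dots,\overline{j_{p-1}}\,)$ according to the parity $\sigma\in\{0,1\}$ of $\sum_k j_k \bmod q$ — say with the ``standard'' cyclic orientation $0\to 1\to\cdots\to p-1$ when $\sigma=0$ and the reversed orientation when $\sigma=1$. Two $p$-gons sharing an edge in slot $k_0$ have $\sum j_k$ differing by a value whose residue toggles between $0$ and $1$, hence opposite parities, hence opposite orientations, so they induce opposite orientations on the shared edge — exactly the compatibility condition. The boundary orientation of slot-$k_0$ edge-flips must be checked to be the same edge-direction up to sign for both, which is automatic because the combinatorial edge is literally the same $1$-face of the wedge product; a one-line check that the orientation map $\mathsf{P}$ (which toggles parity) is orientation-reversing while $\mathsf{S}$, $\mathsf{R}$ are orientation-preserving makes this rigorous. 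Thus $\WPsurf{p}{2q}$ is an orientable closed connected surface. Finally the genus drops out of Euler's formula: with $f$-vector $(f_0,f_1,f_2)=(p,pq,2q)q^{p-2}$ we get $\chi = (p-pq+2q)q^{p-2} = -q^{p-2}(pq-p-2q)$, and since the surface is orientable, $\chi = 2-2g$, giving $g = 1+\tfrac12 q^{p-2}(pq-p-2q)$ as claimed.

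The main obstacle is the manifold (link) condition at a vertex: one must be certain that the $2q$ edges and $2q$ $p$-gons incident to a vertex of $\WPsurf{p}{2q}$ assemble into a single cycle, not a disjoint union of shorter cycles (which would make the space pinched rather than a manifold). The clean way is to identify the vertex figure explicitly: a vertex $v$ of the wedge product with ``two adjacent $\Zq$ slots'' at positions $i,i+1$ lies in the affine copy of $C_p$ (a $p$-gon face) for each choice of the remaining $\overline{j_k}$'s; the $p$-gons of $\WPsurf{p}{2q}$ through $v$ are those with $\sum j_k\equiv 0,1$, and I would show that the two slots $i$ and $i+1$ act as a single ``degree of freedom'' whose $2q$ allowed states (a pair $(j_i,j_{i+1})$ up to the constraint) form one orbit under the rotation $\mathsf{R}$ fixing $v$, and that consecutive states share an edge. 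Equivalently, one checks that the subgraph induced on the $2q$ facet-hyperplanes $h_{i,j}$ and $h_{i+1,j}$ is a single $2q$-cycle. Everything else — connectedness, orientability, the genus computation — is then routine bookkeeping with the vector notation and Euler's formula.
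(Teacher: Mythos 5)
Your proposal follows essentially the same route as the paper's proof: reduce the local manifold check to one vertex and one edge via the flag-transitivity of Proposition~\ref{prop:RegularityOfWPsurf}, verify that each edge lies in exactly two $p$-gons and that the $2q$ $p$-gons at a vertex close up into a single cycle, prove connectedness by walking between $p$-gons across shared edges, orient each $p$-gon according to the parity of $\sum_k j_k \bmod q$ (adjacent $p$-gons have opposite parity, hence induce opposite orientations on their common edge), and read off the genus from Euler's formula. The edge count (exactly two admissible values of $j_{k_0}$, one per residue class, distinct since $q\ge2$), the flip-based connectivity argument, the orientation scheme, and the genus computation are all correct and match the paper.

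One detail in your treatment of the vertex link is wrong as stated, and it is precisely the point you single out as the main obstacle. The rotation $\mathsf{R}\colon(\overline{j_0},\overline{j_1},\dots)\mapsto(\overline{j_0+1},\overline{j_1-1},\dots)$ preserves $\sum_k j_k \bmod q$ and hence the parity class of a $p$-gon; its orbits on the $2q$ $p$-gons at the vertex $([q],[q],\overline{0},\dots,\overline{0})$ therefore have size $q$, not $2q$, so $\mathsf{R}$ alone cannot certify that the link is a single cycle rather than two $q$-cycles. The correct verification --- which is what the paper does, and which your remark that ``consecutive states share an edge'' is implicitly gesturing at --- is the explicit alternating walk: starting from $(\overline{0},\overline{0},\overline{0},\dots)$, alternately increase $j_0$ (crossing the edge $([q],\overline{j_1},\overline{0},\dots)$) and decrease $j_1$ (crossing the edge $(\overline{j_0},[q],\overline{0},\dots)$); each step toggles the parity between $0$ and $1$, the states reached after $2k$ and $2k+1$ steps are $(k,-k)$ and $(k+1,-k)$ and are pairwise distinct for $k=0,\dots,q-1$, and the walk first returns to its start after exactly $2q$ steps, exhibiting the link as one $2q$-cycle. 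With that substitution your proof coincides with the paper's.
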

\begin{proof}
  We start by proving that $\WPsurf{p}{2q}$ is a manifold, i.e. that at every vertex the $p$-gons form a $2$-ball.  By
  Proposition~\ref{prop:RegularityOfWPsurf} all the vertices are equivalent, so it suffices to consider the vertex $v =
  (\Zq,\Zq,\overline{0},\dots,\overline{0})$. The $p$-gons adjacent to the vertex $v$ correspond to the vectors
  $(\,\overline{j_0},\overline{j_1},\overline{0},\dots,\overline{0})$ with $j_0 + j_1 \equiv 0,1 \mod q$. Starting from
  the $p$-gon $(\overline{0},\overline{0},\overline{0},\dots,\overline{0})$ we obtain all the other $p$-gons adjacent to
  $v$ if we alternately increase the first component $j_0$ or decrease the second component $j_1$ as shown in
  Figure~\ref{fig:StarOfWPsurf}. The $2q$ edges joining the $p$-gons correspond to the vectors
  $(\Zq,\overline{j_1},\overline{0},\dots,\overline{0})$ or $(\,\overline{j_0},\Zq,\overline{0},\dots,\overline{0})$
  with $j_0, j_1 \in \Zq$. Thus the $p$-gons around each vertex form a $2$-ball and $\WPsurf{p}{2q}$ is a manifold with
  uniform vertex degree~$2q$.

  \begin{figure}[htb]
    \centering
    \psfrag{00}[lc]{\scriptsize{$(\overline{0},\overline{0})$}}
    \psfrag{01}[lc]{\scriptsize{$(\overline{0},\overline{1})$}}
    \psfrag{31}[lb]{\scriptsize{$(\overline{3},\overline{1})$}}
    \psfrag{32}[lb]{\scriptsize{$(\overline{3},\overline{2})$}}
    \psfrag{22}[cb]{\scriptsize{$(\overline{2},\overline{2})$}}
    \psfrag{23}[cb]{\scriptsize{$(\overline{2},\overline{3})$}}
    \psfrag{13}[cc]{\scriptsize{$(\overline{1},\overline{3})$}}
    \psfrag{10}[cc]{\scriptsize{$(\overline{1},\overline{0})$}}
    \includegraphics[width=4cm]{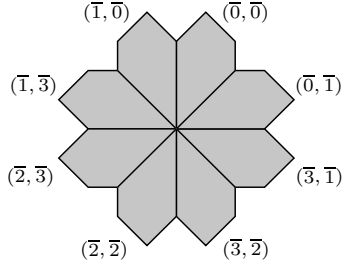}
    \caption{The $p$-gons (shaded areas) adjacent the vertex $(\Zq,\Zq,\overline{0},\overline{0},\overline{0})$ of $\WPsurf{p}{2q}$
      form a $2$-ball. In this case $p = 5$ and $q = 4$ and the pentagons are labeled by the first two entries of the
      vector representation.}
    \label{fig:StarOfWPsurf}
  \end{figure}

  We proceed by showing that the manifold is connected by constructing a sequence of $p$-gons connecting two arbitrary
  $p$-gons. Consider two $p$-gons $F = (\overline{j_0},\overline{j_1},\overline{j_2},\ldots,\overline{j_{p-1}})$ and $G =
  (\overline{j_0'},\overline{j_1'},\overline{j_2'},\ldots,\overline{j_{p-1}'})$. Then there exists a sequence of $p$-gons in the star of the
  vertex $v_0 = (\Zq,\Zq,\overline{j_2},\ldots,\overline{j_{p-1}})$ connecting $F$ to a $p$-gon $F_1 =
  (\overline{j_0'},\overline{j_1''},\overline{j_2},\ldots,\overline{j_{p-1}})$. For $k=1,\ldots,p-2$ we continue around the vertices $v_k =
  (\overline{j_1'},\ldots,\overline{j_{k-1}'},\Zq,\Zq,\overline{j_{k+2}},\ldots,\overline{j_{p-1}})$ to obtain
  $F_k=(\overline{j_1'},\ldots,\overline{j_{k}'},\overline{j_{k+1}''},\overline{j_{k+2}},\ldots,\overline{j_{p-1}})$ with the first $k$
  components equal to those of $G$. Then either $F_{p-1} = G$ or they share the common edge
  $(\overline{j_0'},\ldots,\overline{j_{p-2}'},\Zq$). 
  Hence we have shown so far that $\WPsurf{p}{2q}$ 
  is a closed connected equivelar $2$-manifold of type
  $\{p,2q\}$ without boundary.
  
  The surface $\WPsurf{p}{2q}$ consists of two families of $p$-gons $(\overline{j_0},\dots,\overline{j_{p-1}})$, 
  distinguished by ${j_0}+\dots+{j_{p-1}}\mod q$. We assign an orientation to the edges of the $p$-gons as follows
\[
  ([q],\overline{j_1},\overline{j_2},\dots,\overline{j_{p-1}})\ \rightarrow\ 
  (\overline{j_0},[q],\overline{j_2},\dots,\overline{j_{p-1}})\ \rightarrow\ 
  (\overline{j_0},\overline{j_1},[q],\dots,\overline{j_{p-1}})\ \rightarrow\ \dots  
\]
if ${j_0}+\dots+{j_{p-1}}\equiv0\mod q$, and
\[
  ([q],\overline{j_1},\overline{j_2},\dots,\overline{j_{p-1}})\ \leftarrow\ 
  (\overline{j_0},[q],\overline{j_2},\dots,\overline{j_{p-1}})\ \leftarrow\ 
  (\overline{j_0},\overline{j_1},[q],\dots,\overline{j_{p-1}})\ \leftarrow\ \dots  
\]
if ${j_0}+\dots+{j_{p-1}}\equiv1\mod q$.
In Figure~\ref{fig:OrientationWPsurf} this is illustrated for $p=5$. 
Since every edge is contained in one $p$-gon with sum~$\equiv0$ and one with sum~$\equiv1$ this yields
  a consistent orientation for the surface.

  As $\WPsurf{p}{2q}$ is an orientable manifold we calculate the genus of the surface from the $f$-vector given in
  Lemma~\ref{lem:VertsAndEdgesOfWPsurf} via the Euler characteristic:
  \begin{align*}
    g &= 1 - \tfrac{1}{2}\chi(\WPsurf{p}{2q}) = 1 + \tfrac{1}{2}((q-1)p - 2q)q^{p-2}.
  \end{align*}
\vskip-5.5mm

\end{proof}

  \begin{figure}[t]
    \begin{minipage}{0.35\textwidth}
      \centering
      \psfrag{0}[rb]{\tiny{$(\Zq,\overline{j_1},\overline{j_2},\overline{j_3},\overline{j_4})$}}
      \psfrag{1}[lb]{\tiny{$(\overline{j_0},\Zq,\overline{j_2},\overline{j_3},\overline{j_4})$}}
      \psfrag{2}[lb]{\tiny{$(\overline{j_0},\overline{j_1},\Zq,\overline{j_3},\overline{j_4})$}}
      \psfrag{3}[ct]{\tiny{$(\overline{j_0},\overline{j_1},\overline{j_2},\Zq,\overline{j_4})$}}
      \psfrag{4}[rb]{\tiny{$(\overline{j_0},\overline{j_1},\overline{j_2},\overline{j_3},\Zq)$}}

      \psfrag{01}{}
      \psfrag{12}{}
      \psfrag{23}{}
      \psfrag{34}{}
      \psfrag{40}{}

      \psfrag{pg}[cb]{\tiny$\;\;(\overline{j_0},\overline{j_1},\overline{j_2},\overline{j_3},\overline{j_4})$}
      \psfrag{or}[cc]{$\sum j_k \equiv \mathbf{0}$}

      \includegraphics[height=4cm]{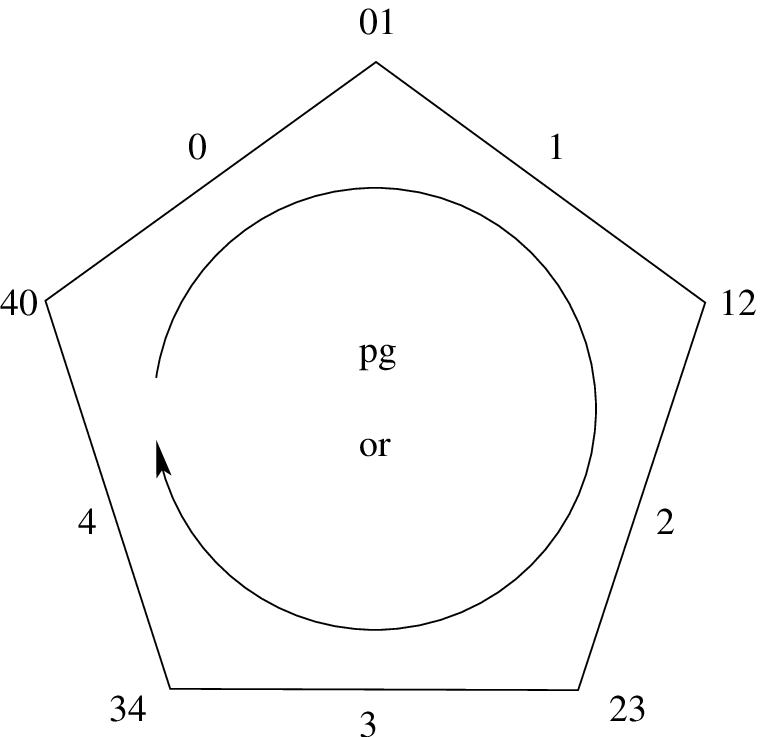}
    \end{minipage}
    \hfill
    \begin{minipage}{0.35\textwidth}
      \centering
      \psfrag{0}[rb]{\tiny{$(\Zq,\overline{j_1},\overline{j_2},\overline{j_3},\overline{j_4})$}}
      \psfrag{1}[lb]{\tiny{$(\overline{j_0},\Zq,\overline{j_2},\overline{j_3},\overline{j_4})$}}
      \psfrag{2}[lb]{\tiny{$(\overline{j_0},\overline{j_1},\Zq,\overline{j_3},\overline{j_4})$}}
      \psfrag{3}[ct]{\tiny{$(\overline{j_0},\overline{j_1},\overline{j_2},\Zq,\overline{j_4})$}}
      \psfrag{4}[rb]{\tiny{$(\overline{j_0},\overline{j_1},\overline{j_2},\overline{j_3},\Zq)$}}

      \psfrag{01}{}
      \psfrag{12}{}
      \psfrag{23}{}
      \psfrag{34}{}
      \psfrag{40}{}

      \psfrag{pg}[cb]{\tiny$\;\;(\overline{j_0},\overline{j_1},\overline{j_2},\overline{j_3},\overline{j_4})$}
      \psfrag{or}[cc]{$\sum j_k \equiv \mathbf{1}$}

      \includegraphics[height=4cm]{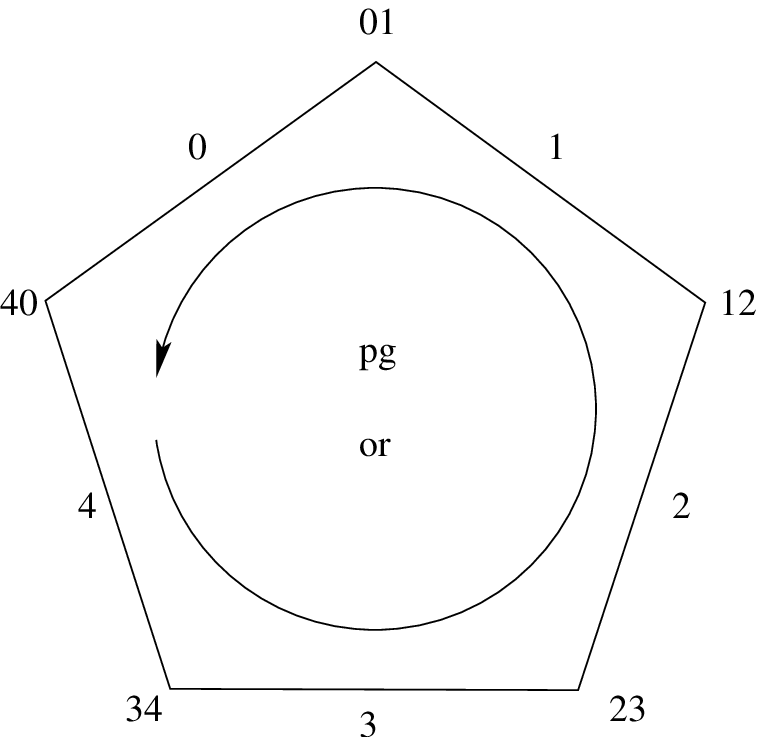}
    \end{minipage}
    \caption{The orientation of the $p$-gons in the surface $\WPsurf{p}{2q}$, for $p=5$.}
    \label{fig:OrientationWPsurf}
  \end{figure}

For $p=3$ we obtain a regular surface $\WPsurf{3}{2q}$ of type $\{3,2q\}$ with $f$-vector $(3q,3q^2,2q^2)$ in the wedge
product $\wp{3}{q-1}$. The genus of the surface is $1+ \tfrac{1}{2}q(q-3)$ and thus quadratic in the number of vertices.
Unfortunately, the wedge product of a triangle and a $(q-1)$-simplex is a polytope of dimension $3q-1$ with $3q$ facets,
hence a $(3q-1)$-simplex. So our construction does not provide an ``interesting'' realization of the surface. The surface
$\WPsurf{3}{2q}$ is well known and occurs already in Coxeter and Moser~\cite{CoxeterMoser1980}. For $q=2$ the surface is
the octahedron and for $q=3$ Dyck's Regular Map. For Dyck's regular map there exist two realizations in $\R^3$, one by
Bokowski~\cite{Bokowski1989} and a more symmetric one by Brehm~\cite{Brehm1987}. 

For $q=2$ the surface $\WPsurf{p}{4}$ is the surface of type~$\{p,4\}$ constructed by McMullen,
Schulz, and Wills~\cite[Sect. 4]{McMullen1983}. In their paper they construct a realization of the surface 
directly in $\R^3$. Their construction also provides two additional parameters~$m$ and~$n$. 
Our construction produces the McMullen--Schulz--Wills surface  with parameters $m=2$ and $n=2$.

So our surface generalizes two interesting families of surfaces. As we will see, for some parameters it also provides a
new way of realizing the surface in the boundary complex of a $4$-polytope and by orthogonal projection in~$\R^3$.

In contrast to simplicial complexes, which may always be realized in a high-dimensional simplex, there is no fool-proof
strategy for realizing general polytopal complexes. For abstract non-simplicial polyhedral $2$-manifolds
in general not even a realization in $\R^N$ for high $N$ is possible. For example, equivelar surfaces of type $\{p,2q+1\}$ are
not realizable in general:

\begin{proposition}[Betke and Gritzmann \cite{BetkeGritzmann1982}]
  Let $S \subset \R^N$ be an equivelar polyhedral $2$-manifold of type $\{p,2q+1\}$ with $q \geq 1$ in $\R^d$.  Then
  $2(2q+1) \geq p+1$.
\end{proposition}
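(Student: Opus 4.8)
The plan is to run a flag-counting argument with respect to a generic linear functional, using the elementary fact that a planar convex polygon has, in any generic direction, exactly one ``highest'' and one ``lowest'' vertex. First I would choose a linear functional $u\colon \R^N \to \R$ that is injective on the (finitely many) vertices of $S$; this guarantees that no edge is ``horizontal'' and that $u$ is non-constant on the affine hull of every face. For a face $F$ (a convex $p$-gon) and a vertex $v\in F$, let $e,e'$ be the two edges of $F$ at $v$, and call $v$ a \emph{top} of $F$ if both $e$ and $e'$ lead to vertices of smaller $u$-value, a \emph{bottom} if both lead to larger $u$-value, and \emph{regular} otherwise. Since the $u$-heights of the vertices of a convex polygon, read in boundary order, form a cyclic sequence with exactly one local maximum and one local minimum (the $u$-superlevel sets of $F$ are convex, hence connected), each face contributes exactly one top incidence, one bottom incidence, and $p-2$ regular incidences. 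Counting $(\text{vertex},\text{face})$-incidences globally, the total number of top incidences is $f_2$, of bottom incidences $f_2$, and of regular incidences $(p-2)f_2$, with grand total $p f_2$.

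Next I would recount the top and bottom incidences vertex by vertex. Around a vertex $v$ the $2q+1$ faces correspond to the $2q+1$ consecutive pairs of edges in the (cyclic) link of $v$, and $v$ is a top, respectively a bottom, of the face spanned by such a pair exactly when both edges of the pair point down, respectively up. Hence, writing $T_v$ and $B_v$ for the number of faces at $v$ of which $v$ is a top, respectively a bottom, one has $T_v+B_v = (2q+1) - (\text{number of sign changes in the cyclic up/down pattern of the }2q+1\text{ edges at }v)$. The number of sign changes in a cyclic $\pm$-sequence is always even, so $T_v+B_v$ is \emph{odd} and therefore $T_v+B_v\ge 1$. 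Moreover, the vertex $v^{*}$ that maximises $u$ has all its edges pointing down, so it is a top of all $2q+1$ of its faces, giving $T_{v^{*}}+B_{v^{*}} = 2q+1$; symmetrically the minimising vertex $v_{*}$ satisfies $T_{v_{*}}+B_{v_{*}} = 2q+1$, and $v^{*}\neq v_{*}$.

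Summing over all vertices now gives $2f_2 = \sum_v (T_v+B_v) \ge (2q+1) + (2q+1) + (f_0-2)\cdot 1 = f_0 + 4q$. On the other hand, double counting incidences gives $p\,f_2 = (2q+1)\,f_0$ (each of the $f_2$ faces is a $p$-gon, each of the $f_0$ vertices lies in $2q+1$ faces). Substituting $f_0 = p f_2/(2q+1)$ into $2f_2\ge f_0 + 4q$ yields $\bigl(2 - \tfrac{p}{2q+1}\bigr) f_2 \ge 4q > 0$ for $q\ge 1$, hence $p < 2(2q+1)$, i.e.\ $2(2q+1)\ge p+1$. I expect the main obstacle to be twofold: first, the genericity bookkeeping — checking that $u$ can be chosen so that no edge is horizontal and the ``one top, one bottom'' property really holds for every face, including possibly flat ones; and second, the conceptual point that the crude parity bound $T_v+B_v\ge 1$ only yields the weaker estimate $p\le 2(2q+1)$, so one genuinely has to exploit that a closed surface has vertices attaining the global maximum and minimum of $u$, which contribute $2q+1$ rather than $1$, in order to gain the extra unit.
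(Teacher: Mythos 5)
Your argument is correct, and there is nothing in the paper to compare it against: the proposition is quoted as an external result of Betke and Gritzmann, with no in-paper proof. Your sweep/Morse-style counting is a clean, self-contained derivation. The key steps all check out: genericity of $u$ is unproblematic on a finite vertex set; the unimodality of $u$ along the boundary cycle of a convex polygon is the standard fact that a line meets the boundary of a planar convex body in at most two points, so the superlevel vertices form a contiguous arc; and the identity $T_v+B_v=(2q+1)-(\text{number of sign changes})$ combined with the evenness of the number of sign changes in a cyclic $\pm$-sequence is exactly where the odd vertex degree enters. You also correctly isolate the essential refinement: the crude bound $T_v+B_v\ge 1$ together with $p\,f_2=(2q+1)f_0$ only gives $p\le 2(2q+1)$, and the missing unit is supplied by the global maximum and minimum of $u$, each of which contributes $2q+1$ rather than $1$. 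Two small points worth making explicit in a final write-up: the alternating edge--face cycle structure of the link of every vertex presupposes that $S$ is a \emph{closed} manifold (the intended reading here), and the existence of an injective $u$ presupposes that distinct combinatorial vertices are distinct points of $\R^N$, which holds for an embedded polyhedral complex. With those remarks your proof stands; it is very much in the spirit of (and quite possibly identical to) the original Betke--Gritzmann argument.
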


If realized in some $\R^N$, a polyhedral surface can be embedded into $\R^5$ via an arbitrary general position
projection.  Combining this observation with Theorem~\ref{thm:SurfaceInWedgeProduct} we get the following corollary.

\begin{corollary}
  \label{cor:SurfaceInR5}
  The surfaces $\WPsurf{p}{2q}$ of Theorem~\ref{thm:SurfaceInWedgeProduct} ($p\ge3$, $q\ge2$) can be realized in $\R^5$.
\end{corollary}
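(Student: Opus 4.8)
The plan is to combine Theorem~\ref{thm:SurfaceInWedgeProduct} with the two facts stated just before the corollary: first, that the wedge product $\wp{p}{q-1}$ is a genuine polytope (hence lives in some $\R^N$, concretely $\R^{2+p(q-1)}$), so its $2$-skeleton — and in particular the subcomplex $\WPsurf{p}{2q}$ — is a geometrically realized polyhedral surface in $\R^N$ with $N = 2+p(q-1)$; and second, the observation, attributed above to Perles~\cite[p.~204]{Grunbaum2003}, that any polyhedral surface realized in some $\R^N$ can be pushed down to $\R^5$ by a generic linear (or affine) projection. So the corollary is essentially immediate once these two ingredients are lined up.

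In more detail, I would proceed as follows. First, invoke Theorem~\ref{thm:SurfaceInWedgeProduct} to conclude that $\WPsurf{p}{2q}$ is a subcomplex of the boundary complex of the polytope $\wp{p}{q-1} \subset \R^{2+p(q-1)}$ whose underlying space is a closed $2$-manifold; in particular it is a \emph{geometric} polyhedral $2$-manifold embedded in $\R^{2+p(q-1)}$, with convex $p$-gon faces (coming from Corollary~\ref{cor:pGonsOfWP}). Second, apply a generic linear projection $\pi\colon\R^{2+p(q-1)} \to \R^5$. The point of choosing $\pi$ generic is that $5 = 2\cdot 2 + 1$, which is exactly the dimension in which a generic projection of a $2$-complex is injective: for a fixed finite polyhedral complex, the set of linear maps to $\R^5$ that identify two points of disjoint (closed) faces, or that fail to be injective on the relative interior of some face, is a finite union of proper algebraic subsets of the space of linear maps, hence avoidable. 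Thus for generic $\pi$ the restriction $\pi|_{\WPsurf{p}{2q}}$ is a homeomorphism onto its image, the images of the $p$-gon faces remain planar convex $p$-gons (planarity is preserved by any linear map, and genericity keeps them non-degenerate), and their relative interiors stay pairwise disjoint; so $\pi(\WPsurf{p}{2q})$ is a polyhedral realization in $\R^5$ of the same abstract surface.

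The only real content beyond Theorem~\ref{thm:SurfaceInWedgeProduct} is the generic-projection lemma, and this is standard (it is the same ``general position projection'' argument used for embedding $k$-complexes in $\R^{2k+1}$); the dimension count $2\cdot 2+1 = 5$ is what forces the target to be $\R^5$ and not lower, and it is precisely the bound quoted from Grünbaum for Perles' lemma. I would therefore not belabor it: I would state that the realization in $\R^{2+p(q-1)}$ from Theorem~\ref{thm:SurfaceInWedgeProduct} together with Perles' lemma yields the realization in $\R^5$, and note that the restriction $p\ge3$, $q\ge2$ is inherited verbatim from Theorem~\ref{thm:SurfaceInWedgeProduct}. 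The main (minor) obstacle to watch is bookkeeping about what ``realization'' means for a non-simplicial complex — one must check that genericity also keeps each $p$-gon face flat and convex, not merely that vertices are in general position — but linearity of $\pi$ makes this automatic, so there is no genuine difficulty here.
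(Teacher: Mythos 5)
Your proposal is correct and follows exactly the paper's route: the paper also derives the corollary by combining Theorem~\ref{thm:SurfaceInWedgeProduct} (which realizes $\WPsurf{p}{2q}$ in the boundary of $\wp{p}{q-1}\subset\R^{2+p(q-1)}$) with the general-position-projection observation attributed to Perles, stated in the sentence immediately preceding the corollary. Your added justification of the generic projection (the $2\cdot2+1=5$ count and the preservation of flat convex faces under linear maps) is a correct elaboration of what the paper leaves implicit.
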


\section{Realizing the surfaces $\WPsurf{p}{4}$ and $\WPsurf{p}{4}^*$.}\label{sec:realizing}

In the following we provide a construction for the surfaces $\WPsurf{p}{4}$ and $\WPsurf{p}{4}^*$ in $\R^3$ via
projection. We construct a realization of $\wp{p}{1}$ that allows for a projection of the surface into the boundary of a
$4$-dimensional polytope. A particular property of the embedding will be that all the faces of the surface lie on the
``lower hull'' of the polytope. In this way we obtain the surface by an orthogonal projection to $\R^3$ and we do not
need to take the Schlegel diagram. The dual surface $\WPsurf{p}{4}^*$ is constructed via a projection of the product
$\wp{p}{1} \times I$ of the wedge product with an interval~$I$.

\subsection{Projections and linear algebra}
\label{sec:ProjectionsAndLA}

Under a projection $\pi$ the $k$-faces of a polytope $P$ may be mapped to $k$-faces of $\pi(P)$, to lower dimensional
faces of $\pi(P)$, to subsets of faces of $\pi(P)$, or into the interior of $\pi(P)$. We restrict ourselves to the
nicest case of preserved faces (formerly known as ``strictly preserved faces''~\cite{Ziegler2004a}), as
given by the following definition.

\begin{definition}[preserved faces] Let $P \subset \R^d$ be a polytope and $Q = \pi(P)$ be the image of $P$ under the
  affine projection map $\pi:\R^d \rightarrow \R^k$. A non-empty face $F$ of $P$ is \emph{preserved} by $\pi$ if
    \begin{compactenum}[\rm (D1)]
        \item $\pi(F)$ is a face of $Q$,
        \item $\pi(F)$ is combinatorially equivalent to~$F$, and
        \item the preimage $\pi^{-1}(\pi(F)) \cap P$ is $F$.
    \end{compactenum}
\end{definition}

These preserved faces are stable under perturbation: If a face $F$ of a polytope $P$ is preserved under projection, then
it is also preserved by any small perturbation of the projection. Moreover, in the case of a simple polytope $P$ we
may consider small perturbations of (the facet-defining inequalities/hyperplanes of) the polytope $P$ that do not
change the combinatorial type, and for sufficiently small such perturbations the 
corresponding face $\tilde{F}$ of the slightly perturbed polytope $\tilde{P}$ is also preserved under the same
projection. To be able to apply this definition to our polytopes we use the following lemma which connects the preserved
faces to the inequality description of a polytope.

\begin{lemma}[Preserved faces: linear algebra version~\cite{Ziegler2004a}]
\label{lem:projection}
Let $P \subset \R^d$ be a $d$-polytope with facets $a_ix \leq 1$ for $i\in [m]$, $F$ a non-empty face of $P$, and $H_F$
the index set of the inequalities that are tight at~$F$. Then $F$ is preserved by the projection to the first $k$
coordinates if and only if the facet normals truncated to the last $d-k$ coordinates 
$\{a_i^{(d-k)}: i\in H_F\}$ positively span~$\R^{d-k}$.
\end{lemma}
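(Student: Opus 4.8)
The plan is to translate each of the three defining conditions (D1), (D2), (D3) into a statement about the facet normals $a_i$, $i\in H_F$, relative to the coordinate splitting $\R^d=\R^k\times\R^{d-k}$, and then to merge them by an elementary lemma on positively spanning sets. Write $x=(x',x'')$ with $x'\in\R^k$, $x''\in\R^{d-k}$, so that $\pi(x)=x'$ and $L:=\ker\pi=\{0\}\times\R^{d-k}$; then $L^\perp=\R^k\times\{0\}$ is exactly the set of linear functionals $\tilde c$ with $\tilde c^{(d-k)}=0$, and each such $\tilde c$ has the form $\tilde c=c\circ\pi$ for a unique $c$ on $\R^k$. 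I will use two standard facts about the $d$-polytope $P=\{x:a_ix\le1,\ i\in[m]\}$: the normal cone of $P$ at a nonempty face $F$ with tight index set $H_F$ equals $\operatorname{cone}\{a_i:i\in H_F\}$, and $\tilde c$ is maximised over $P$ precisely on $F$ iff $\tilde c\in\operatorname{relint}\operatorname{cone}\{a_i:i\in H_F\}=\{\sum_{i\in H_F}\lambda_ia_i:\lambda_i>0\}$; and $\aff(F)=\{x:a_ix=1,\ i\in H_F\}$, so its direction space is $\operatorname{lin}(F)=\operatorname{span}\{a_i:i\in H_F\}^\perp$. (Both come from choosing a point in $\operatorname{relint}(F)$ and using complementary slackness.)

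First I would treat (D1) together with (D3): these two hold simultaneously if and only if there are coefficients $\lambda_i>0$, $i\in H_F$, with $\sum_{i\in H_F}\lambda_i a_i^{(d-k)}=0$. In one direction, if (D1) holds then $\pi(F)$ is the face of $Q=\pi(P)$ maximising some $c$; set $\tilde c=c\circ\pi\in L^\perp$ and $M=\max_P\tilde c=\max_Q c$. A direct check gives $\pi^{-1}(\pi(F))\cap P=\{x\in P:\tilde cx=M\}$, which is a face of $P$ containing $F$; condition (D3) says it equals $F$, hence $\tilde c$ lies in the relative interior of the normal cone at $F$, so $\tilde c=\sum\lambda_ia_i$ with $\lambda_i>0$, and $\tilde c\in L^\perp$ is exactly the equation $\sum\lambda_ia_i^{(d-k)}=0$. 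Conversely, from such $\lambda_i$ one sets $\tilde c:=\sum\lambda_ia_i$, which lies in $L^\perp$ and in the relative interior of the normal cone, so $F$ is the face of $P$ maximising $\tilde c$; then $\pi(F)$ is the face of $Q$ maximising the corresponding $c$ (this gives (D1)) and $\pi^{-1}(\pi(F))\cap P=\{x\in P:\tilde cx=M\}=F$ (this gives (D3)).

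Next I would analyse (D2). The restriction $\pi|_F\colon F\to\pi(F)$ is a surjective affine map; since combinatorially equivalent polytopes have the same dimension while a non-injective affine surjection strictly decreases dimension, (D2) holds iff $\pi|_F$ is injective, i.e.\ iff $L\cap\operatorname{lin}(F)=\{0\}$, equivalently $L^\perp+\operatorname{span}\{a_i:i\in H_F\}=\R^d$. Projecting onto the last $d-k$ coordinates (i.e.\ passing to $\R^d/L^\perp$), this is precisely the statement that $\{a_i^{(d-k)}:i\in H_F\}$ spans $\R^{d-k}$ as a vector space.

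Putting the pieces together, $F$ is preserved by $\pi$ exactly when the vectors $v_i:=a_i^{(d-k)}$, $i\in H_F$, linearly span $\R^{d-k}$ \emph{and} admit a relation $\sum\lambda_iv_i=0$ with all $\lambda_i>0$; the last point of the argument is the elementary observation that this conjunction is equivalent to $\{v_i\}$ positively spanning $\R^{d-k}$. (If they positively span, they linearly span, and writing $-\sum_jv_j=\sum_i\mu_iv_i$ with $\mu_i\ge0$ yields $\sum_i(\mu_i+1)v_i=0$; conversely, given a positive relation and any $w=\sum\mu_iv_i$, for large $t$ we get $w=\sum_i(\mu_i+t\lambda_i)v_i$ with positive coefficients, so $w\in\operatorname{cone}\{v_i\}$.) The step I expect to be the main obstacle is the treatment of (D1) and (D3) together: one has to verify carefully that $\pi^{-1}(\pi(F))\cap P$ is genuinely the face of $P$ on which the pulled-back functional is maximised, and then see that (D3) upgrades ``$F$ is contained in that face'' to ``$F$ is that face'', which is exactly what forces $\tilde c$ into the \emph{relative interior} of the normal cone — equivalently, produces the strict positivity $\lambda_i>0$ in the dependence $\sum\lambda_iv_i=0$.
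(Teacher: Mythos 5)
Your proof is correct. The paper itself does not prove this lemma but imports it from \cite{Ziegler2004a}, and your argument is essentially the standard one given there: conditions (D1) and (D3) together are equivalent, via the normal cone $N(F)=\operatorname{cone}\{a_i: i\in H_F\}$ and its relative interior, to the existence of a strictly positive dependence $\sum_{i\in H_F}\lambda_i a_i^{(d-k)}=0$; condition (D2) is equivalent to injectivity of $\pi$ on $\aff(F)$, i.e.\ to the truncated normals linearly spanning $\R^{d-k}$; and the conjunction of these two is exactly the positive-spanning condition. The only points worth making explicit in a write-up are the standard facts you invoke (that the irredundant facet description gives $N(F)$ as stated with $\operatorname{relint}N(F)$ equal to the strictly positive combinations, and that every face of a polytope is exposed), both of which hold here since $P$ is a $d$-polytope with $0$ in its interior.
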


The above Lemma~\ref{lem:projection} suffices to obtain a projection of the wedge product to~$\R^4$ that preserves the
surfaces for some parameters. To get a realization of the surface we might construct the Schlegel diagram of the projected
polytope in $\R^3$ via a central projection. To avoid this central projection  
we will project the surface onto the lower hull of the $4$-polytope.

\begin{definition}[lower hull]\label{def:LowerHull}
  The \emph{lower hull} of a polytope with respect to some coordinate direction $x_\ell$ is the polytopal complex
  consisting of all faces that have a normal vector with negative $x_\ell$-coordinate.
\end{definition}

So the following is the lemma we will use to prove the realization of the surface in the lower hull of a projected wedge
product.

\begin{lemma}[Preserved faces on the lower hull] 
  \label{lem:lowerHull}
  Let $P = \{ x \in \R^d\ |\ a_ix \leq 1, i\in [m]\} \subset\R^d$ be a polytope, $F$ a non-empty face of $P$, and $H_F
  \subset [m]$ the index set of the inequalities that are tight at~$F$. Then $\pi(F)$ is on the lower hull with respect
  to $x_{k-1}$ of the projection to the first $k$ coordinates if and only if
  \begin{compactenum}[\rm (L1)]
  \item \label{item:positivelySpan} the facet normals truncated to the last $d-k$ coordinates $a_i^{(d-k)}$ with $i \in
    H_F$ positively span $\R^{d-k}$, and
  \item \label{item:lowerHull} there exist $\lambda_i \geq 0$ such that $\nu = \sum_{i\in H_F} \lambda_i a_i$ with
    $(\nu_k,\dots,\nu_{d-1}) = 0$ and $\nu_{k-1} < 0$.
  \end{compactenum}
\end{lemma}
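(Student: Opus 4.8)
The plan is to reduce the statement to Lemma~\ref{lem:projection} plus an elementary analysis of which coordinate direction the normal cone of $\pi(F)$ occupies. Recall that a face $G$ of a polytope $Q\subset\R^k$ lies on the lower hull with respect to $x_{k-1}$ precisely when its (outer) normal cone contains a vector whose last coordinate is negative; equivalently, when $-e_{k-1}$ lies in the normal cone of $G$ after a suitable perturbation, but the robust formulation is that some outer normal of $G$ has negative $x_{k-1}$-entry. So the proof has two halves: first, $\pi(F)$ must actually be a face of $Q=\pi(P)$ at all, which is exactly condition~(D1); by Lemma~\ref{lem:projection} this is equivalent to~(L\ref{item:positivelySpan}). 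Second, assuming $\pi(F)$ is a face, I must characterize when it lies on the lower hull, and show this is equivalent to~(L\ref{item:lowerHull}).

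First I would set up the projection $\pi:\R^d\to\R^k$ deleting the last $d-k$ coordinates, write $\bar\pi:\R^d\to\R^{d-k}$ for the complementary projection onto the last $d-k$ coordinates, and note that the outer normal cone of $Q$ at $\pi(F)$ is $N_Q(\pi(F))=\{\nu\in\R^k: (\nu,0)\in N_P(F)\}$, where the $0$ is the zero vector in $\R^{d-k}$; here $N_P(F)=\operatorname{cone}\{a_i:i\in H_F\}$ since $P$ is given by the inequalities $a_ix\le1$. (This identity — that the normal cone of the image face is the slice of the normal cone of the original face by the coordinate subspace $\R^k\times\{0\}$ — is the standard duality between projection of polytopes and intersection of normal fans; I would cite or quickly verify it.) Thus a vector $\nu\in\R^k$ is an outer normal of $\pi(F)$ at $Q$ iff there are $\lambda_i\ge0$ with $\sum_{i\in H_F}\lambda_i a_i=(\nu,0)$, i.e.\ $\sum\lambda_i a_i$ has vanishing last $d-k$ coordinates and its first $k$ coordinates equal $\nu$.

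Now $\pi(F)$ lies on the lower hull with respect to $x_{k-1}$ iff it has an outer normal $\nu$ with $\nu_{k-1}<0$, which by the previous paragraph says exactly: there exist $\lambda_i\ge0$ with $\sum_{i\in H_F}\lambda_i a_i=(\nu_0,\dots,\nu_{k-1},0,\dots,0)$ and $\nu_{k-1}<0$ — which is verbatim condition~(L\ref{item:lowerHull}). It remains to observe that the hypothesis ``$\pi(F)$ is on the lower hull'' already presupposes $\pi(F)$ is a face, so condition~(D1), and hence~(L\ref{item:positivelySpan}) via Lemma~\ref{lem:projection}, is part of the characterization; conversely~(L\ref{item:positivelySpan}) guarantees that $F$ is preserved, so that $\pi(F)$ is indeed a face combinatorially equivalent to $F$ and the normal-cone description above applies. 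Chaining these equivalences gives the ``if and only if''. The only genuinely delicate point is the edge case where $\nu=0$ is forced (that happens exactly when $\bar\pi$-images of the $a_i$, $i\in H_F$, positively span $\R^{d-k}$ \emph{and} additionally $H_F$ positively spans all of $\R^d$, i.e.\ $F$ is a vertex whose image could be an interior point) — but condition~(D1)/(L\ref{item:positivelySpan}) together with $F$ being a proper face rules this out, since a preserved proper face has a nonzero outer normal in $Q$; I would spell this out to make sure $\nu_{k-1}<0$ is achievable whenever (L\ref{item:lowerHull}) holds nontrivially. That case-check is the main obstacle, and it is minor; the rest is the normal-cone bookkeeping plus an appeal to Lemma~\ref{lem:projection}.
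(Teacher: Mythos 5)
Your proof is correct and follows essentially the same route as the paper, whose entire proof consists of the observation that (L1) is exactly the Projection Lemma~\ref{lem:projection} and (L2) is Definition~\ref{def:LowerHull} unwound; you merely make explicit the normal-cone identity $N_{\pi(P)}(\pi(F))=\smallSetOf{\nu}{(\nu,0)\in\operatorname{cone}\{a_i: i\in H_F\}}$ that the paper leaves implicit. No substantive difference in approach.
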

\begin{proof}
  The first part~(L\ref{item:positivelySpan}) of this lemma is exactly the Projection Lemma~\ref{lem:projection} and the
  second part~(L\ref{item:lowerHull}) corresponds to Definition~\ref{def:LowerHull}.
\end{proof}

\subsection{Projection of the surface to $\R^4$ and to $\R^3$}
\label{sec:ProjectionToR4AndR3}

We are now ready to state our main result about the projections of the surfaces contained in the wedge products of
$p$-gons and intervals.

\begin{theorem}
  \label{thm:pdwp}
  The wedge product $\wp{p}{1}= C_p \wedgeproduct \simplex{1}$ of dimension~$2+p$ has a realization in $\R^{2+p}$ such
  that all the faces corresponding to the surface $\WPsurf{p}{4}\subset \wp{p}{1}$ are preserved by the
  projection to the last four/three coordinates.

  This realizes $\WPsurf{p}{4}$ as a subcomplex of a polytope boundary in~$\R^4$,
  and as an embedded polyhedral surface in~$\R^3$.
\end{theorem}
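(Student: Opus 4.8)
The plan is to construct an explicit realization of $\wp{p}{1}$ in $\R^{2+p}$ by choosing good coordinates, and then verify the conditions of Lemma~\ref{lem:lowerHull} for every $p$-gon of $\WPsurf{p}{4}$ and the projection onto the last four (resp.\ three) coordinates. The starting point is the canonical inequality description: the $p$-gon $C_p$ has facet normals $a_0,\dots,a_{p-1}$ with $a_i x \le 1$, and $\simplex1 = [-1,1]$ has the two inequalities $y\le1$, $-y\le1$, so each $y_i$ is a single scalar and the wedge product sits in $\R^{2+p}$ with facets $h_{i,0}\colon a_i x + y_i \le 1$ and $h_{i,1}\colon a_i x - y_i\le 1$. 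The combinatorics from Corollary~\ref{cor:pGonsOfWP} tell us that a $p$-gon face of $\wp{p}{1}$ corresponds to a choice of sign $\epsilon_i\in\{+1,-1\}$ for each $i$ (picking $h_{i,0}$ or $h_{i,1}$), i.e.\ the face is cut out by the subsystem $a_i x + \epsilon_i y_i = 1$ for all $i$, which by Proposition~\ref{prop:SpecialFacesWedgeProducts}/Corollary~\ref{cor:pGonsOfWP} is affinely a copy of $C_p$; the surface $\WPsurf p4$ keeps exactly those with $\sum_i [\epsilon_i = -1]\equiv 0,1 \bmod 2$, i.e.\ an even or a ``one-off-even'' number of $-1$'s.

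The key step is the choice of realization: I would keep $C_p$ as a standard (or slightly deformed) convex $p$-gon in the first two coordinates, but rescale/tilt the $y_i$-directions in the spirit of the ``deformed product'' realizations of Amenta--Ziegler and Joswig--Ziegler cited in the introduction, so that the $2+p$ coordinates split as (two ``base'' coordinates $x_0,x_1$) $+$ ($p-3$ coordinates that will be projected away) $+$ (three kept coordinates). Concretely, after relabeling so the projection $\pi$ retains the last four coordinates $x_{p-2},x_{p-1},\dots$, condition (L1) requires that for each surface $p$-gon $F$ the truncations $a_i^{(p-2)}$ of its $p$ tight facet normals (these are the rows $(a_i,\epsilon_i e_i)$ truncated to their first $p-2$ of the $y$-slots) positively span $\R^{p-2}$. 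Because the $y$-part of the $i$-th normal is $\epsilon_i e_i$, the truncated normals contain $\pm e_1,\dots,\pm e_{p-2}$ among them (whatever the signs $\epsilon_i$ are, for $i$ ranging over those $p-2$ indices), so they automatically positively span $\R^{p-2}$ — this is exactly why the $\simplex1$ factor and the sign pattern of the surface make the projection work, and it is robust under small perturbation. Condition (L2) then asks for a nonnegative combination $\nu = \sum_{i\in H_F}\lambda_i (a_i,\epsilon_i e_i)$ whose components in the ``projected-away'' $y$-slots vanish, whose components in the last two/three kept $y$-slots vanish too, and whose remaining kept coordinate is strictly negative; I would produce such a $\nu$ by first using the positive dependence $\sum c_i a_i = 0$, $\sum c_i = 1$ among the $p$-gon normals (all $c_i>0$) to kill the $x$-part, which forces all $\lambda_i$ proportional, then observe that with $\lambda_i = c_i$ the $y$-slots receive $c_i\epsilon_i$, and these are generically nonzero — so instead I would group the indices in cyclically adjacent pairs sharing a $[q]=[2]$ edge of the surface and cancel the unwanted $y$-components pairwise, which is possible precisely because the surface was defined by the parity condition in Definition~\ref{def:WedgeProductSurface}. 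Finally, a global choice of the direction $x_{k-1}$ and a uniform downward tilt of the whole realization makes $\nu_{k-1}<0$ simultaneously for all surface $p$-gons, giving the ``lower hull'' statement and hence, by orthogonal projection, the embedded surface in $\R^3$.

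The main obstacle I anticipate is making these pairwise cancellations and the single sign condition (L2) hold \emph{simultaneously} for all $q^{p-1}$-ish surface $p$-gons with one fixed realization and one fixed projection — any one $p$-gon is easy, but the surface $p$-gons overlap along edges and vertices, so the $\lambda_i$'s and the tilt vector must be chosen consistently. I expect to resolve this by exploiting the regularity from Proposition~\ref{prop:RegularityOfWPsurf}: it suffices to verify (L1)--(L2) for one representative $p$-gon (say $(\overline0,\dots,\overline0)$ and its immediate neighbors) and then transport the verification along the automorphisms $\mathsf F,\mathsf P,\mathsf R,\mathsf S$, \emph{provided} the realization and the projection direction are chosen compatibly with (at least enough of) these symmetries — e.g.\ the cyclic shift $\mathsf S$ corresponds to a cyclic relabeling of the $y$-coordinates, so the kept/projected-away split cannot be $\mathsf S$-invariant, but the sign/parity symmetry $\mathsf P$ can be, and that is the one that matters for handling both families of $p$-gons at once. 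A secondary technical point is checking that the deformation keeps $\wp{p}{1}$ combinatorially correct (no degeneracies beyond the intended one) and that the perturbation needed for ``preserved'' (as opposed to merely weakly preserved) faces is available, which follows from simplicity of $\wp p1$ (Corollary~\ref{cor:SimpleWedgeProducts}) and the perturbation-stability of preserved faces noted after Lemma~\ref{lem:projection}.
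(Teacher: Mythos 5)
Your overall skeleton (pick a deformed realization of the simple polytope $\wp{p}{1}$, then verify conditions (L1)--(L2) of Lemma~\ref{lem:lowerHull} for each surface $p$-gon) matches the paper's strategy, but your verification of (L1) is wrong, and it is wrong at exactly the point where the content of the theorem lies. A $p$-gon face $(\overline{j_0},\dots,\overline{j_{p-1}})$ is cut out by choosing \emph{one} facet from each pair $a_ix\pm\eps y_i\le 1$, so in the undeformed wedge product the $y$-part of the $i$-th tight normal is $(-1)^{j_i}\eps e_i$ with a single sign per index, not both signs. Truncating to the $p-2$ discarded coordinates $y_2,\dots,y_{p-1}$ therefore yields two zero vectors together with the $p-2$ singly-signed vectors $(-1)^{j_i}\eps e_i$; such a set spans only an orthant and never positively spans $\R^{p-2}$. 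So positive spanning is not ``automatic'' --- without a genuine deformation the $p$-gons are \emph{not} preserved --- and your proposal never specifies a deformation that fixes this. The paper's proof consists precisely of exhibiting one: exploiting simplicity, it perturbs the system so that (after rescaling rows and variables by powers of a large $M$) the $0$-th pair of normals acquires entries $-1$ in all slots $y_1,\dots,y_{p-1}$ and the $i$-th pair acquires a $+1$ in slot $y_{i+1}$; the truncated normals are then a small perturbation of $\{-\1,\,e_1,\dots,e_{p-2},\,\zero\}$, which does positively span $\R^{p-2}$ uniformly in the sign pattern. The same dominating $-1$ in the $y_1$ slot of the $0$-th normal is what delivers (L2) and puts every surface $p$-gon on the lower hull.

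Two further misreadings compound the gap. Condition (L2) does not require killing the $x$-part of $\nu$ (only coordinates $k,\dots,d-1$ must vanish, and $\nu_{k-1}$ must be negative), so your step forcing $\lambda_i\propto c_i$ via $\sum c_ia_i=\rowzero$ is an unnecessary constraint that, as you observe, leaves nonzero entries in every $y$-slot; the proposed ``pairwise cancellation along shared $[2]$-edges'' cannot repair this, because each coordinate $y_i$ is touched by exactly one of the $p$ tight normals of a given $p$-gon, so there is nothing to cancel against --- and indeed the parity condition defining $\WPsurf{p}{4}$ plays no role in the projection argument for $q=2$, since \emph{all} $2^p$ special $p$-gons survive; the parity condition is only needed to select a surface combinatorially. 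Finally, the simultaneity worry you raise is a non-issue: (L1)--(L2) are per-face conditions with their own coefficients $\lambda_i$, so no consistency across overlapping $p$-gons and no appeal to the automorphisms of Proposition~\ref{prop:RegularityOfWPsurf} is needed; in the paper the verification is the same one-line computation for every sign pattern.
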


\begin{proof}
  We proceed in two steps. In the first step we construct a wedge product of a $p$-gon with a $1$-simplex and
  describe a suitable deformation.  In the second step we use the Projection Lemma~\ref{lem:lowerHull} to show that the
  projection of the deformed wedge product to the first four coordinates preserves all the $p$-gons of the surface
  $\WPsurf{p}{4}$. Furthermore, all the faces of the projected surface lie on the lower hull of the projected polytope
  and hence the surface may be realized by an orthogonal projection to the last four/three coordinates.

  Let the $p$-gon $C_p$ be given by $C_p = \{x \in \R^2\ |\ a_i x \leq 1, i \in \Zp\}$ with facets in cyclic order, let
  $\eps > 0$ be a small positive number and for a $1$-simplex take the set $\simplex{1} = \{y\in \R\ |\ \pm\eps y \leq 1\}$.
  Then by Definition~\ref{def:WedgeProduct} the inequality description of the wedge product $\wp{p}{1}$ is
  \begin{align*}
    \left(
    \begin{array}{ccc|cccc}
      a_0    &\pm\eps   &         &       &         &         &       \\
      a_1    &          & \pm\eps &       &         &         &       \\
      a_2    &          &         &\pm\eps&         &         &       \\
      \vdots &          &         &       & \ddots  &         &       \\ 
      a_{p-2}&          &         &       &         & \pm\eps &       \\
      a_{p-1}&          &         &       &         &         &\pm\eps\\
    \end{array}
    \right)
    \begin{pmatrix}
      x \\ y_0 \\ y_1 \\ \vdots \\ y_{p-1}
      \end{pmatrix}
      \leq 
      \begin{pmatrix}
        1 \\ 1 \\ 1 \\ \vdots \\ 1
      \end{pmatrix}.
  \end{align*}
  Each of the rows in the matrix $C$ corresponds to two facets -- one for each sign.  
  Since $\wp{p}{1}$ is a simple polytope we may perturb the facet normals of the wedge product in
  the following way without changing the combinatorial structure.
  Thus for $M>0$ large enough we obtain a realization of $\wp{p}{1}$ of the form
  \[
  \left(
    \begin{array}{ccc|cccc}
    a_0      &\pm\eps           & -\frac{1}{M}       &-\frac{1}{M^2} & \cdots          & -\frac{1}{M^{p-2}} &-\frac{1}{M^{p-1}}\\
    a_1      &                  & \pm\eps            & \frac{1}{M}   &                 &                    &                 \\
    a_2      &                  &                    & \pm\eps       & \frac{1}{M}     &                    &                 \\
    \vdots   &                  &                    &               &\ddots           & \ddots             &                 \\
    a_{p-2}  &                  &                    &               &                 &\pm\eps             &\frac{1}{M}      \\
    a_{p-1}  &                  &                    &               &                 &                    & \pm\eps         \\
  \end{array}
  \right)
  \begin{pmatrix}
     x \\ y_0 \\y_1\\ \vdots \\ y_{p-1}
  \end{pmatrix}
  \leq 
  \begin{pmatrix}
    1\\ 1 \\ 1 \\ \vdots \\ 1
  \end{pmatrix}.
  \]
  We rescale the inequalities of the wedge product and replace the variables by multiplying the $i$-th pair of rows with
  $M^{p-1-i}$ and setting $y_i' = M^{p-1-i}y_i$ to get
  \[
  \left(
    \begin{array}{ccc|cccc}
    M^{p-1}a_0      &\pm\eps           & -1                 &-1             & \cdots          &        -1          &      -1 \\
    M^{p-2}a_1      &                  & \pm\eps            &       1       &                 &                    &         \\
    M^{p-3}a_2      &                  &                    & \pm\eps       &       1         &                    &         \\
           \vdots   &                  &                    &               &\ddots           & \ddots             &         \\
    M      a_{p-2}  &                  &                    &               &                 &\pm\eps             &      1  \\
           a_{p-1}  &                  &                    &               &                 &                    & \pm\eps \\
  \end{array}
  \right)
  \begin{pmatrix}
    x \\ y_0' \\y_1'\\ \vdots \\ y_{p-1}'
  \end{pmatrix}
  \leq 
  \begin{pmatrix}
    M^{p-1}\\ M^{p-2} \\ M^{p-3} \\ \vdots \\ 1
  \end{pmatrix}.
  \]
  The above modifications do not change the combinatorial structure: scaling the inequalities changes nothing, and the
  change of variables is just a scaling of the coordinate axes. According to Definition~\ref{def:WedgeProductSurface},
  the surface $\WPsurf{p}{4}$ contains the following
  $p$-gon faces of $\wp{p}{1}$:
  \[
  \WPsurf{p}{4} = \Big\{(\overline{j_0},\ldots,\overline{j_{p-1}}) \in [2]^p\ \big|\ \sum_{k=0}^{p-1} j_k \equiv 0,1 \mod 2\Big\}.
  \]
  Since $[2] = \{0,1\}$, the surface $\WPsurf{p}{4}$ contains \emph{all} the ``special''
  $p$-gons of $\wp{p}{1}$ specified by Corollary~\ref{cor:pGonsOfWP}. Each of the $p$-gons is
  obtained by intersecting~$p$ facets with one facet chosen from each pair of rows, i.e.\ the $p$-gon
  $(\overline{j_0},\ldots,\overline{j_{p-1}})$ corresponds to a choice of signs $((-1)^{j_0},\ldots,(-1)^{j_{p-1}})$ in the 
  above matrix. So the normals to the facets containing the $p$-gon $(\overline{j_0},\ldots,\overline{j_{p-1}})$ are:

  \begin{align}
    \left(
      \begin{array}{ccc|cccc}
    M^{p-1}a_0      &(-1)^{j_0}   \eps & -1                 &-1             & \cdots  &        -1          &      -1 \\
    M^{p-2}a_1      &                  & (-1)^{j_1}   \eps  &       1       &         &                    &         \\
    M^{p-3}a_2      &                  &                    &(-1)^{j_3}\eps &       1 &                    &         \\
           \vdots   &                  &                    &               &\ddots   & \ddots             &         \\
    M      a_{p-2}  &                  &                    &               &         &(-1)^{j_{p-2}} \eps &      1  \\
           a_{p-1}  &                  &                    &               &         &                    &(-1)^{j_{p-1}}\eps 
      \end{array}
    \right)\label{eq:pgon_normals}
  \end{align}
  By Lemma~~\ref{lem:projection} a $p$-gon survives the projection to the first four coordinates 
  if rows of the right-hand part of the matrix (formed by the last~\mbox{$p-2$} columns)
  are positively spanning. Since $\eps$ is very small and the conditions of the
  projection lemma are stable under perturbation, the last $p-2$ columns are positively spanning independent of the
  choice of signs $(-1)^{j_i}$. Consequently all the $p$-gons survive the projection to the first four coordinates.

  This deformed realization of the wedge product has the additional property that all the $p$-gon faces of the surface
  have a face normal that has a negative fourth ($y_1$) coordinate as required in Lemma~\ref{lem:lowerHull}: The normal
  cone of a $p$-gon face is spanned by the normals of the facets containing the $p$-gon given by the matrix
  in~\eqref{eq:pgon_normals}. Since the $-1$ in the $y_1$ coordinate of the first row dominates the $y_1$ coordinates of
  the other normals, the normal cone of the projected $p$-gon contains a vector $\nu = (\nu_x,\nu_y)\in
  \R^{2+p}$ with $\nu_{y_j} = 0$ for $j=2,\dots,p-1$ and negative $\nu_{y_1} < 0$. Hence the $p$-gons of the
  surface lie on the lower hull of the projected polytope.
  
  Thus we get a coordinatization of the surface by orthogonal projection to the first three coordinates: 
  There is no need for a Schlegel projection.
\end{proof}

\subsection{Surface duality and polytope duality}
\label{sec:SurfaceAndPolytopeDuality}

In Section~\ref{sec:ProjectionToR4AndR3}, we have obtained a realization of the surface $\WPsurf{p}{4}$ 
as a subcomplex of a polytope boundary in~$\R^4$, and thus as an embedded polyhedral surface in~$\R^3$.
Now our ambition is to derive from this a realization of the dual surface $\WPsurf{p}{4}^*$.
This is not automatic: For this the dimension~$4$ is crucial, and also we need
that not only the surface $\WPsurf{p}{4}$, but also the ``prism'' $\WPsurf{p}{4}\times I$
over the surface embeds into a $4$-polytope as a subcomplex.

(Indeed, a surface embedded as a subcomplex in the boundary of a $d$-polytope $P$
exhibits a collection of faces of dimensions $0$, $1$, and $2$.
This corresponds to faces of dimensions $d-1$, $d-2$ and $d-3$ in the boundary
of the dual polytope $P^*$. These do not form a subcomplex unless $d=3$;
for larger $d$ this is a collection of high-dimensional faces that just have the
inclusion relations dictated by the face poset of $\WPsurf{p}{4}^*$.)

For the following, the \emph{prism} over a cell complex or polyhedral complex $\Sigma$ refers to
the product $\Sigma\times I$ with an interval $I=[0,1]$, equipped with the obvious
cellular structure that comes from the cell decompositions of $\Sigma$ (as given) and
of $I$ (with two vertices and one edge).
In particular, if $\Sigma$ is a polytope (with the canonical face structure), then
then $\Sigma\times I$ is the prism over $\Sigma$ in the classical sense of polytope theory.

\begin{theorem}
  \label{thm:DualSurface}
  The prism $\wp{p}{1} \times I$ over the wedge product $\wp{p}{1}$ 
  has a realization such that the prism over the surface $\WPsurf{p}{4} \times I$ survives the
  projection to $\R^4$ resp.~$\R^3$. 

  Furthermore, the dual of the projected $4$-polytope contains the dual surface $\WPsurf{p}{4}^*$ as a subcomplex, thus 
  by constructing a Schlegel diagram we obtain a realization of the surface $\WPsurf{p}{4}^*$ in~$\R^3$.
\end{theorem}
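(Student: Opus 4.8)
The plan is to place the prism $\WPsurf{p}{4}\times I$ into the boundary $3$-sphere of a $4$-polytope $Q$, and then pass to the dual polytope $Q^*$, where the faces dual to the ``vertical'' cells of the prism will assemble into $\WPsurf{p}{4}^*$, which a Schlegel diagram then exhibits in $\R^3$. For Step~1, since $\wp{p}{1}$ is simple (Corollary~\ref{cor:SimpleWedgeProducts}) so is the prism $\wp{p}{1}\times I$, and its facet inequalities may be deformed within its combinatorial type. Along the lines of the proof of Theorem~\ref{thm:pdwp} I would produce a deformed realization of $\wp{p}{1}\times I$ in $\R^{3+p}$ together with a projection onto four of the coordinates for which \emph{every} face of $\WPsurf{p}{4}\times I$ is preserved: the $3$-cells $F\times I$ ($F$ a $p$-gon of $\WPsurf{p}{4}$), the $2$-cells $F\times\{0\},\,F\times\{1\},\,e\times I$ ($e$ an edge of $\WPsurf{p}{4}$), and all their edges and vertices. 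By the Projection Lemma~\ref{lem:projection} it is enough to check the positive-spanning condition for the $3$-cells $F\times I$ alone, because for any subface $\Phi\subseteq F\times I$ the set $H_\Phi$ of tight inequalities only grows, so the truncated facet normals still positively span. One can moreover arrange (as in Theorem~\ref{thm:pdwp}) that the dominant entries put all these faces on a common lower hull, so that the projected image $Q$ descends orthogonally to $\R^3$ as well. The outcome of Step~1 is a $4$-polytope $Q$ in which $\WPsurf{p}{4}\times I$ sits as a polyhedral subcomplex of $\partial Q$ whose maximal cells are the facets $F\times I$ of $Q$.

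For Step~2, faces of $Q$ of dimension $k$ correspond, inclusion-reversingly, to faces of $Q^*$ of dimension $3-k$. Consider the faces of $Q^*$ dual to the vertical cells of the prism: a vertex $(F\times I)^*$ for each $p$-gon $F$ of $\WPsurf{p}{4}$, an edge $(e\times I)^*$ for each edge $e$, and a $2$-face $(v\times I)^*$ for each vertex $v$. I claim these form a subcomplex of $\partial Q^*$ combinatorially isomorphic to $\WPsurf{p}{4}^*$. For the edges: $e\times I$ is a $2$-face of the $4$-polytope $Q$, hence a ridge, so it lies in exactly two facets of $Q$, which are precisely the two cells $F\times I,\,F'\times I$ with $F,F'$ the two $p$-gons of $\WPsurf{p}{4}$ meeting along $e$ (both in the collection, since $\WPsurf{p}{4}\times I$ is a subcomplex of $\partial Q$). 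Hence $(e\times I)^*$ is an edge of $Q^*$ with endpoints $(F\times I)^*$ and $(F'\times I)^*$.

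For the $2$-faces: $\WPsurf{p}{4}\times I\subseteq\partial Q$ is a $3$-manifold with boundary $\WPsurf{p}{4}\times\{0,1\}$, so $v\times I$ is an interior edge, and its link in this subcomplex is a full circle --- the cycle of the cells $e\times I$ ($e\ni v$) and $F\times I$ ($F\ni v$), which has four vertices and four edges because $\WPsurf{p}{4}$ has vertex degree $4$. But the link of the edge $v\times I$ in the whole $3$-sphere $\partial Q$ is itself a polygonal circle (the faces of $Q$ strictly between $v\times I$ and $Q$ form the boundary complex of a polygon), and a subcomplex of a circle that is again a circle must be the entire circle. Therefore $v\times I$ lies in exactly four facets and four ridges of $Q$, all inside $\WPsurf{p}{4}\times I$, so $(v\times I)^*$ is a quadrilateral whose edges are exactly the $(e\times I)^*$ with $e\ni v$. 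This shows the collection is closed under passing to faces in $Q^*$, and reading off incidences it is precisely the dual cell complex of $\WPsurf{p}{4}$ (vertices $\leftrightarrow$ $p$-gons, edges $\leftrightarrow$ edges, quadrilateral $2$-faces $\leftrightarrow$ vertices); thus $\partial Q^*$ contains $\WPsurf{p}{4}^*$, which is equivelar of type $\{4,p\}$ and, being the dual of a regular map, again regular. Finally (Step~3), the facets of $Q^*$ are dual to the vertices $v\times\{0\},\,v\times\{1\}$ of $Q$, none of which has the form $v\times I$; so $\WPsurf{p}{4}^*$ contains no facet of $Q^*$, and a Schlegel diagram of $Q^*$ based at any facet faithfully displays all of $\partial Q^*$ minus that one facet, in particular all of $\WPsurf{p}{4}^*$, realizing it as an embedded polyhedral surface in $\R^3$.

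The main obstacle is Step~1: pinning down the deformed realization of $\wp{p}{1}\times I$ and the four coordinates to project onto, and verifying the positive-spanning condition of Lemma~\ref{lem:projection} for all the cells $F\times I$ (whence for all faces of $\WPsurf{p}{4}\times I$). This is a more delicate instance of the deformation in Theorem~\ref{thm:pdwp}: the prism coordinate must be retained, which rules out the naive product realization and forces a different, generally non-product, realization and a different projection. Once that is in place, Steps~2 and 3 are bookkeeping about polytope duality, with the only subtle point being the ``full subcircle'' argument that controls how many facets of $Q$ contain each edge $v\times I$.
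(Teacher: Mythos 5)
Your overall strategy is the paper's: deform the prism $\wp{p}{1}\times I$ within its combinatorial type, project to four coordinates so that all cells of $\WPsurf{p}{4}\times I$ are preserved (and lie on a lower hull), then dualize the resulting $4$-polytope and read off $\WPsurf{p}{4}^*$ from the faces dual to the vertical prism cells. Your Step~2 is in fact carried out \emph{more} carefully than in the paper, which only asserts that the face lattice of the projected polytope ``contains three copies of the face lattice of $\WPsurf{p}{4}$'' and that one copy dualizes to the surface. Your reduction to the $3$-cells $F\times I$ via monotonicity of the positive-spanning condition, the ridge count showing $(e\times I)^*$ is an edge with the right endpoints, and the ``full subcircle'' argument showing that the quotient polygon $Q/(v\times I)$ is exactly the quadrilateral coming from the star of $v$ in the surface, together establish the subcomplex property that the paper's preamble to this section identifies as the crux of why the prism is needed at all. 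That part is sound.

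The genuine gap is the one you flag yourself: Step~1 is not executed, and it is the substance of the first claim of the theorem. The paper closes it by a one-row extension of the matrix from Theorem~\ref{thm:pdwp}: append the interval facets $(\rowzero\mid\pm\delta)$ as a last row acting only on the new coordinate $z$, extend the cascade of off-diagonal entries one step further so that the row pair $M^{1}a_{p-1}$ acquires a $+1$ in the $z$-column and the row pair $M^{p}a_0$ acquires a $-1$ there, and project to the coordinates $(x,y_0',y_1')$ --- four coordinates \emph{not} including $z$. One small correction to your closing discussion: the prism coordinate is not ``retained'' by the projection; it is eliminated. What forces the non-product realization is that the two facets $\pm\delta z\le 1$ do not contain any cell $F\times I$, so the $p$ wedge-product facets containing $F\times I$ must by themselves positively span the full $(p-1)$-dimensional complement including the $z$-direction; hence they must be tilted to acquire nonzero $z$-components, which is exactly what the extra $\mp1$ entries achieve (for $\eps=0$ the truncated rows are $-(e_1+\dots+e_{p-1})$ together with $e_1,\dots,e_{p-1}$, and this is stable under the $\eps$-perturbation). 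With that matrix written down, your argument is complete and agrees with the paper's, modulo the paper's terser treatment of the duality step.
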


\begin{proof}
  The proof follows the same line as the proof of Theorem~\ref{thm:pdwp}. For $0 < \delta \ll 1$ we construct the
  product $\wp{p}{1} \times I$ of an interval $\smallSetOf{z \in \R}{\pm\delta z \leq \1}$ with the orthogonal wedge product which has the
  following inequality description:

  \begin{align*}
    \left(
    \begin{array}{ccccccc|c}
      a_0    &\pm\eps   &         &       &         &         &        &           \\
      a_1    &          & \pm\eps &       &         &         &        &           \\
      a_2    &          &         &\pm\eps&         &         &        &           \\
      \vdots &          &         &       & \ddots  &         &        &           \\ 
      a_{p-2}&          &         &       &         & \pm\eps &        &           \\
      a_{p-1}&          &         &       &         &         &\pm\eps &           \\ \hline
             &          &         &       &         &         &        & \pm\delta
    \end{array}
    \right)
    \begin{pmatrix}
        x \\ y_0 \\y_1\\ \vdots \\ y_{p-1} \\ z
      \end{pmatrix}
      \leq 
      \begin{pmatrix}
        1 \\ 1 \\ 1\\ \vdots \\ 1 \\ 1 \\ 1
      \end{pmatrix}.
  \end{align*}
  As in Theorem~\ref{thm:pdwp} we will project onto the first $4$ coordinates indicated by the vertical line in the next
  matrix. We perform a suitable deformation and obtain a deformed polytope combinatorially equivalent to~$\wp{p}{1}
  \times I$:
  \begin{align*}
    \left(
    \begin{array}{ccc|ccccc}
      M^{p  }a_0    &\pm\eps   &   -1    &  -1   &   -1    &   -1    &  -1    &  -1       \\
      M^{p-1}a_1    &          & \pm\eps &   1   &         &         &        &           \\
      M^{p-2}a_2    &          &         &\pm\eps&    1    &         &        &           \\
             \vdots &          &         &       & \ddots  & \ddots  &        &           \\ 
      M^{ 2 }a_{p-2}&          &         &       &         & \pm\eps &   1    &           \\
      M^{ 1 }a_{p-1}&          &         &       &         &         &\pm\eps &   1       \\ 
        \rowzero    &          &         &       &         &         &        & \pm\delta
    \end{array}
    \right)
    \begin{pmatrix}
      x \\ y_0' \\y_1'\\ \vdots \\ y_{p-1}' \\ z
    \end{pmatrix}
      \leq 
      \begin{pmatrix}
        M^{p} \\ M^{p-1} \\ M^{p-2} \\ \vdots \\ M^{2} \\ M^1 \\ 1
      \end{pmatrix}.
  \end{align*}
  The matrix has the same structure as the one used in Theorem~\ref{thm:pdwp} except for the~$0$ in the last row of the
  first column. The prism over the surface $\WPsurf{p}{4} \times I$ is a union of prisms over $p$-gons, where each
  prism is identified with the corresponding vector $(\overline{j_0},\ldots,\overline{j_{p-1}})$ of the $p$-gon face in the
  surface $\WPsurf{p}{4}$. The normals of the facets containing a prescribed $p$-gon prism are:
  \begin{align*}
    \left(
   \begin{array}{ccc|ccccc}
      M^{p  }a_0    &(-1)^{j_0}\eps  &   -1           &  -1          &   -1    &   -1             &  -1             &-1\\
      M^{p-1}a_1    &                &(-1)^{j_1}\eps  &   1          &         &                  &                 &  \\
      M^{p-2}a_2    &                &                &(-1)^{j_2}\eps&    1    &                  &                 &  \\
             \vdots &                &                &              & \ddots  & \ddots           &                 &  \\ 
      M^{ 2 }a_{p-2}&                &                &              &         &(-1)^{j_{p-2}}\eps&   1             &  \\
      M^{ 1 }a_{p-1}&                &                &              &         &                  &(-1)^{j_{p-1}}\eps& 1
    \end{array}
    \right)
  \end{align*}
  As in the previous proof, the rows of the right-hand part of the matrix are positively spanning because they are positively
  spanning for $\eps = 0$ and the given configuration is only a perturbation since $\eps$ is very small. Further the
  $-1$ in the $y_1$ coordinate of the first row dominates and yields a normal with negative $y_1$ coordinate for
  the prisms over the $p$-gons. So the prism over the surface survives the projection to a $4$-dimensional polytope and
  lies on its lower hull using Lemma~\ref{lem:lowerHull}. This way we obtain a realization of the prism over the surface
  in $\R^3$ by orthogonal projection.

  Looking at the face lattice of the projected polytope we observe that it contains three copies of the face lattice of
  the surface $\WPsurf{p}{4}$ -- the top and the bottom copy and another copy raised by one dimension corresponding to
  the prism faces connecting top and bottom copy shown in Figure~\ref{fig:face_lattice_prism} (left). The face lattice of
  the dual polytope contains three copies of the face lattice of the dual surface. One of those copies based at the
  vertices corresponds to the dual surface contained in the $2$-skeleton of the dual polytope.
\end{proof}

  \begin{figure}[tbh]
    \centering
    \begin{minipage}{.4\textwidth}
      \includegraphics[width=\linewidth]{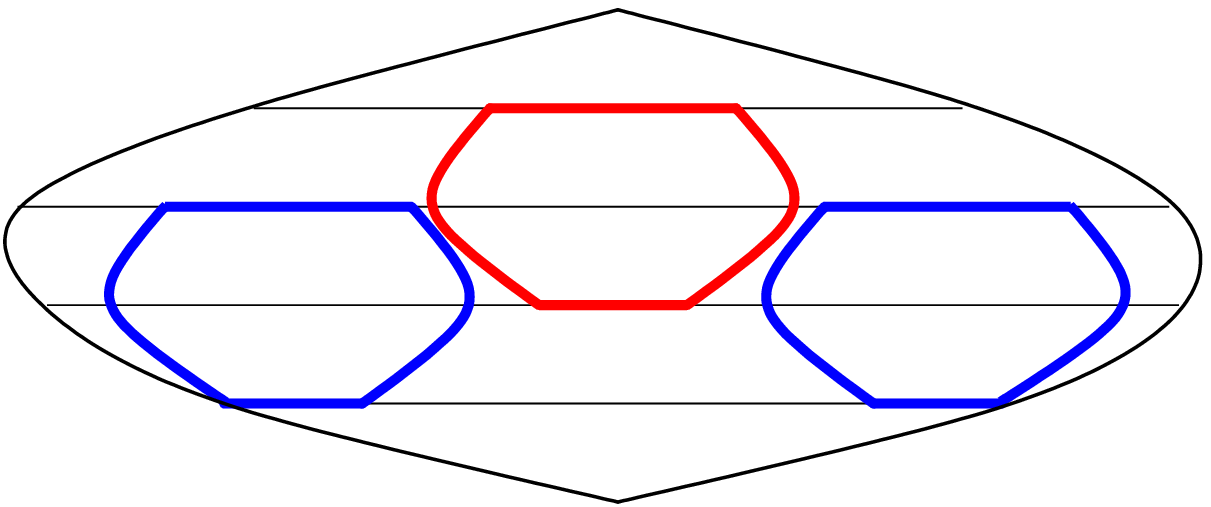}
    \end{minipage}
    \hspace{1cm}
    \begin{minipage}{.4\textwidth}
      \includegraphics[width=\linewidth, angle = 180]{face_lattice_prism}
    \end{minipage}
    \caption{The face lattice of a $4$-polytope containing the face poset of the prism over the surface $\WPsurf{p}{4}$
      (left). The face lattice of the dual polytope contains the face poset of dual surface (right).
      \label{fig:face_lattice_prism}}
  \end{figure}

\subsection{Moduli of the projected surfaces}
\label{sec:Moduli}

We introduce a new technique to estimate the number of moduli of surfaces that arise from projections of high
dimensional simple polytopes. Given a surface~$S$ realized in $\R^3$ with planar convex polygons
(or a more general polytopal complex, e.g.\ a convex polytope in some $\R^d$), the \emph{number of
  moduli} $\moduli{S}$ is the local dimension of the realization space in the neighborhood of the realization~$S$. We focus on
\emph{non-trivial} moduli of a surface, that is, deformations that are not due to a projective transformation.

A way to estimate the number of moduli for a polyhedral surface~$S \subset \R^3$ is to count the ``degrees of freedom''
and subtract the number of ``constraints''. This is captured in a meta-theorem by
Crapo~\cite{Crapo1985:CombTheoryStructures} on much more general incidence configurations,
saying that the number of moduli of a configuration is ``\emph{the number of
degrees of freedom of the vertices minus the number of generic constraints plus the number of hidden (incidence)
theorems}.''  

For a polyhedral surface $S \subset \R^3$,  
the $f_0(S)$ vertices together have~$3 f_0(S)$ degrees of freedom.
There are $p-3$ constraints needed for a $p$-gon to guarantee that its vertices lie in a plane.
Thus we obtain  $f_{02}(S)-3f_2(S)$ constraints, where $f_{02}(S)$
counts the number of vertex-polygon incidences. Finally we subtract $15$ degrees of freedom corresponding to the choice
of a projective basis. Thus the naive ``degrees of freedom minus number of constraints'' count yields the following
estimate for the number of moduli of a realization~$S \subset \R^3$
\begin{align*}
\moduli{S}\ &\ge 3 f_0(S) - (f_{02}(S) - 3 f_2(S))-15 
\\
&\qquad = 3 f_0(S) - 2 f_1(S) + 3f_2(S)-15,
\end{align*}
For the surfaces~$\WPsurf{p}{4}$ of Section~\ref{sec:ProjectionToR4AndR3} this amounts to 
\[
\moduli{\WPsurf{p}{4}} \ge 2^{p-2}(3p-4p+12)-15 = 2^{p-2}(12-p)-15.
\]
This estimate becomes negative and hence useless for $p\ge12$. It shows, however, that there is a huge number
(growing exponentially with $p$) of hidden incidence theorems in the polyhedral surfaces $\WPsurf{p}{4}$.
On the other hand, we will demonstrate here that the surfaces also have a large number of moduli.

Since our surfaces are constructed using projections of high-dimensional simple polytopes, we may use the moduli of the
simple wedge products to obtain moduli for the surfaces. The moduli of a simple $d$-polytope~$P$ are easily described in
terms of their facets: Every facet inequality of~$P$ may be slightly perturbed without changing the combinatorial type.
This yields~$d$ moduli for every facet, i.e.\ a total of~$d {\cdot}f_{d-1}(P)$ moduli. This parametrization of the
realization space of a simple polytope is not well suited to our projection purposes. We introduce a new parametrization
in terms of a subset of the vertices, which allows us to understand the moduli under projections. We propose the
following definition.

\begin{definition}[affine support set] 
  \label{dfn:AffineSupportSet}
  A subset~$A$ of the vertex set of a simple polytope is an \emph{affine support set} if in every realization of the polytope
  and for every facet~$F$ the restriction $A \cap F$ of the subset to the facet is affinely independent.
\end{definition}

As a first observation we obtain a lower bound on the dimension of the realization space of a polytope from the size of
an affine support set.

\begin{lemma}
  Let $P \subset \R^d$ be a simple polytope with vertex set~$V$ and~$A \subseteq V$ an affine support set of~$P$. Then the
  number of moduli $\moduli{P}$
  (i.e.\ the local dimension of the realization space of~$P$) is bounded from below by $d {\cdot}|A|$.
\end{lemma}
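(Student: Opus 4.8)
The plan is to show that each choice of coordinates for the vertices in~$A$ extends to at most one realization of~$P$, and that generically it extends to \emph{a} realization, so that the map sending a nearby realization to the positions of its $A$-vertices is a local parametrization onto an open subset of $(\R^d)^A$. Concretely, I would work with the inequality description $a_ix\le 1$, $i\in[m]$, of~$P$, and recall that a realization of a simple polytope is determined (up to combinatorial equivalence, locally) by the $m$ facet normals together with the right-hand sides, i.e.\ by the $m$ hyperplanes. So the moduli count reduces to showing that the hyperplane data is, locally, a function of the $A$-vertex data, and that the $A$-vertex data can be prescribed freely in a $d\cdot|A|$-dimensional family.

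First I would fix a reference realization $P_0\subset\R^d$ and the combinatorial type. For a vertex $v$ of a simple polytope there are exactly $d$ facets through it, say with index set $H_v\subset[m]$, and $v$ is the unique solution of $a_ix=1$, $i\in H_v$. Now here is where the affine support set hypothesis enters: I claim that the hyperplanes $\{a_ix=1 : i\in[m]\}$ of any nearby realization are determined by the positions of the $A$-vertices. Indeed, fix a facet $F=F_i$. Its vertex set contains $A\cap F_i$, which by Definition~\ref{dfn:AffineSupportSet} is affinely independent \emph{in every realization}; moreover $F_i$ is itself a simple $(d-1)$-polytope, so (arguing by a dimension count, or by an inductive argument on $d$) a generic affine support set meets each facet in a set large enough to affinely span the facet hyperplane --- and in any case, to even state the lemma usefully one needs $|A\cap F_i|\ge d$, i.e.\ $d$ affinely independent points, which then pin down the affine hyperplane $\aff(F_i)$, hence the normalized inequality $a_ix\le 1$, uniquely. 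Thus the entire inequality system, and therefore the realization, is recovered from the $A$-vertex coordinates by solving linear systems --- a rational, in particular smooth, map on the open set where these linear systems stay nondegenerate.

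Next I would run the argument in the other direction to get the lower bound. Consider the evaluation map $\rho$ from (a neighbourhood in) the realization space of~$P$ to $(\R^d)^{A}\cong\R^{d|A|}$ that records the coordinates of the vertices in~$A$. By the previous paragraph $\rho$ is locally injective near $P_0$. To conclude $\moduli{P}\ge d\cdot|A|$ it remains to see that $\rho$ is also locally \emph{surjective} onto a full-dimensional neighbourhood of $\rho(P_0)$: given a small perturbation of the $A$-vertices, reconstruct each facet hyperplane from its $\ge d$ perturbed affinely independent $A$-vertices (this stays well-defined for small perturbations), and check that the resulting perturbed inequality system still defines a polytope combinatorially equivalent to~$P$. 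This last check is exactly the standard stability fact for simple polytopes already invoked in Section~\ref{sec:ProjectionsAndLA}: sufficiently small perturbations of the facet hyperplanes of a simple polytope do not change the combinatorial type. Hence $\rho$ is a local homeomorphism onto an open subset of $\R^{d|A|}$, giving $\moduli{P}\ge d\cdot|A|$.

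The main obstacle, and the step deserving the most care, is the reconstruction map: one must verify that near $P_0$ the $d$ affinely independent $A$-vertices on each facet really do determine that facet's hyperplane, that the "normalization" to right-hand side~$1$ is consistent (this uses that $0$ can be taken in the interior, a harmless affine normalization), and that the family of reconstructed hyperplanes varies smoothly and injectively. The affine independence "in every realization" clause in Definition~\ref{dfn:AffineSupportSet} is precisely what guarantees the reconstruction stays nondegenerate throughout a neighbourhood rather than only at $P_0$, so the hypothesis is used exactly where it is needed. A secondary point to state cleanly is why $\rho$ being a smooth local injection between a manifold (or real semialgebraic set) and $\R^{d|A|}$ forces the source to have local dimension at least $d|A|$; this is immediate from invariance of domain once one knows $\rho$ is also open, which the surjectivity step supplies.
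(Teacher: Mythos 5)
There is a genuine gap, and it sits exactly where you put the weight of your argument: the claim that every facet satisfies $|A\cap F_i|\ge d$, so that the $A$-vertices on $F_i$ pin down the hyperplane $\aff(F_i)$. Definition~\ref{dfn:AffineSupportSet} only requires $A\cap F$ to be affinely \emph{independent} for every facet; it does not require $A\cap F$ to affinely span the facet hyperplane, and it allows $|A\cap F|<d$ or even $A\cap F=\emptyset$. Your remark that ``to even state the lemma usefully one needs $|A\cap F_i|\ge d$'' is not correct: the lemma is asserted (and is true, and is used as a lower bound) for affine support sets of any cardinality, and indeed the paper notes that not every simple polytope admits an $A$ with $|A|=f_{d-1}(P)$, which is the only situation in which every facet could carry $d$ points of $A$. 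Consequently the reconstruction map you build -- and with it the local injectivity of the evaluation map $\rho$ -- fails in general: if some facet contains fewer than $d$ points of $A$, its hyperplane is not determined by the $A$-vertex data, and $\rho$ has positive-dimensional fibres.

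The good news is that injectivity is never needed for a \emph{lower} bound on $\moduli{P}$; only the surjectivity half of your argument matters, and that half can be repaired. Given a small perturbation of the points of $A$, for each facet $F$ you do not need to \emph{reconstruct} $\aff(F)$ uniquely -- you only need to \emph{choose} some hyperplane close to the original one that passes through the perturbed points of $A\cap F$. Such a hyperplane exists precisely because $A\cap F$ is affinely independent (a set of at most $d$ affinely independent points in $\R^d$ always lies on a hyperplane, and this persists under small perturbations); this is the only place the hypothesis is used. Since each $v\in A$ lies on exactly $d$ facets and each of their perturbed hyperplanes is forced through the perturbed position of $v$, the perturbed inequality system realizes the prescribed $A$-vertex positions, and for small perturbations the combinatorial type is unchanged because $P$ is simple. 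This openness of $\rho$ alone gives $\moduli{P}\ge d\cdot|A|$. That is in substance the paper's one-line proof (``any small perturbation of the vertices in $A$ can be extended to another realization''); your write-up adds an injectivity claim that is both unnecessary and, as stated, false.
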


\begin{proof} 
  Any small  perturbation
  of the vertices in~$A$ can be extended to another realization of~$P$. Hence we obtain $d {\cdot}|A|$ independent moduli.
\end{proof}

Observe that the cardinality of the affine support set~$A$ is bounded by the number of facets of the simple polytope~$P$,
since an affinely independent set in every facet contains at most~$d$ vertices and every vertex is contained in~$d$
facets. (This also follows from the observation that the realization space of a simple polytope has dimension $d {\cdot}
f_{d-1}(P)$.) Not every simple polytope has an affine support set~$A$ of cardinality~$f_{d-1}(P)$, as exemplified by
the triangular prism:
The prism has~$5$ facets, but every choice of~$5$ vertices contains the four vertices of a quadrilateral facet, which are
not affinely independent.
We use the above lemma to obtain lower bounds on the dimensions of the realization spaces of projected polytopes.

\begin{corollary}
  \label{cor:ModuliProjection}
  Let $P \subset \R^d$ be a simple polytope with vertex set~$V$, let $A \subseteq V$ be an affine support set and~$\pi:\R^d \to
  \R^e$ a projection that preserves all the vertices in~$A$. Then the number of moduli of~$\pi(P)$ is
  at least $\moduli{\pi(P)}\ge e\cdot|A|$. This bound also holds for arbitrary
  subcomplexes of $\pi(P)$ that contain all the vertices of~$A$.
\end{corollary}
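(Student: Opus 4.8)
The plan is to combine the preceding Lemma (the lower bound $\moduli{P}\ge d\cdot|A|$ coming from an affine support set) with the stability of preserved faces under perturbation, and then read off the dimension of the realization space of $\pi(P)$ directly from the freely movable coordinates of the vertices in $A$. First I would recall that, since $\pi$ preserves all the vertices in $A$, each such vertex $v\in A$ has a well-defined image $\pi(v)$ which is a vertex of $\pi(P)$, and the combinatorial type of $\pi(P)$ is locally constant under small perturbations of $P$ (the preserved faces of a simple polytope stay preserved under sufficiently small perturbations that do not change the combinatorial type of $P$, as noted in the discussion following Lemma~\ref{lem:projection}). So a neighborhood of $P$ in the realization space of $P$ maps, via $\pi$, into the realization space of $\pi(P)$.

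Next I would set up the parametrization. By the Lemma, perturbing the vertices in $A$ arbitrarily within a small ball yields $d\cdot|A|$ independent moduli of $P$; concretely, there is a smooth family $P_t$ of realizations of $P$ with the coordinates of the $A$-vertices serving as $d\cdot|A|$ free parameters. Applying $\pi$ and using that the $A$-vertices remain preserved, we get a family $\pi(P_t)$ of realizations of $\pi(P)$ in which the $e$ coordinates of each image vertex $\pi(v)$, $v\in A$, move independently. The key point is that the map $P_t\mapsto \pi(P_t)$, post-composed with ``record the coordinates of the $A$-vertices'', is (the identity composed with) a coordinate projection from $\R^{d}$ to $\R^{e}$ applied to each $A$-vertex; after a linear change of coordinates we may assume $\pi$ is the projection onto the last $e$ coordinates, so the $e$ ``surviving'' coordinates of each $\pi(v)$ are a subset of the $d$ free coordinates of $v$. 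Hence these $e\cdot|A|$ coordinates can be prescribed independently, which exhibits $e\cdot|A|$ independent tangent directions in the realization space of $\pi(P)$ at $\pi(P)$, giving $\moduli{\pi(P)}\ge e\cdot|A|$.

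Finally, for the last sentence: if $\Sigma\subseteq \pi(P)$ is a subcomplex containing all vertices of $A$, then any realization of $\pi(P)$ restricts to a realization of $\Sigma$, and the $e\cdot|A|$ moduli above already move only the vertices of $A$, which all lie in $\Sigma$; so the same family of perturbations gives $e\cdot|A|$ independent moduli of $\Sigma$. The main obstacle I anticipate is making precise the claim that the perturbations of $P$ supported on $A$ descend to genuinely \emph{independent} perturbations of $\pi(P)$ — one must check that distinct small perturbations of the $A$-coordinates give non-equivalent (or at least distinct) realizations of $\pi(P)$, which is exactly where the hypothesis that $\pi$ \emph{preserves} the vertices in $A$ (so that $\pi(v)$ is honestly a vertex and $\pi^{-1}(\pi(v))\cap P=\{v\}$, keeping the correspondence $v\leftrightarrow\pi(v)$ rigid) is used. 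Everything else is bookkeeping with the already-established Lemma and the perturbation-stability of preserved faces.
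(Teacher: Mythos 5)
Your argument is correct and is exactly the reasoning the paper intends: the paper states this corollary without proof, treating it as immediate from the preceding lemma (perturb the $A$-vertices freely, extend to a realization of $P$, and use stability of preserved faces to see that the $e$ surviving coordinates of each $\pi(v)$ can be prescribed independently). Your elaboration, including the remark that preservation of the vertices in $A$ is what keeps the correspondence $v\leftrightarrow\pi(v)$ rigid and the observation that the subcomplex case follows because the deformations only move vertices of $A$, fills in the details faithfully.
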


So to obtain a good lower bound on the number of moduli of the surfaces~$\WPsurf{p}{4}$ in~$\R^3$, we need to determine a large
affine support set for the corresponding wedge product~$\wp{p}{1}$.

\begin{theorem}
  Let~$\wp{p}{1}$ be the wedge product of $p$-gon and $1$-simplex for $p \ge 3$. Consider the following vertices for
  $k=0,\dotsc,p-2$:
  \begin{align*}
    v_k &= (\underbrace{0\ 0\ \dots\ 0}_{k}\ [2]\ [2]\ \underbrace{1\ 1\ \dots\ 1}_{p-k-2}) 
    &     v_{p-1} & = ( [2]\ 0\ \dots\ 0\ [2]) \\
    \bar{v}_k &= (\underbrace{1\ 1\ \dots\ 1}_{k}\ [2]\ [2]\ \underbrace{0\ 0\ \dots\ 0}_{p-k-2}) &
    \bar{v}_{p-1} & = ( [2]\ 1\ \dots\ 1\ [2]) 
  \end{align*}
  Then $A = \{ v_k \,|\, k = 0,\dotsc,p-1\} \cup \{ \bar{v}_k \,|\, k = 0,\dotsc,p-1\}$ is an affine support set of
  cardinality $2p$.
\end{theorem}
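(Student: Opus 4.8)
The plan is to verify the defining property of an affine support set facet by facet, first cutting the work down to a single facet by a symmetry of $\wp{p}{1}$, and then establishing affine independence there by a combinatorial ``chain'' argument whose hypotheses do not mention any particular realization. The basic tool is the following realization‑independent criterion: if $u_0,\dots,u_s$ are vertices of a polytope $R$ and for each $t\in\{1,\dots,s\}$ there is a facet $G_t$ of $R$ containing $u_0,\dots,u_{t-1}$ but not $u_t$, then $u_0,\dots,u_s$ are affinely independent in every realization of $R$. Indeed, given an affine dependence $\sum_t\lambda_t u_t=\zero$ with $\sum_t\lambda_t=0$, apply the linear functional $g_s$ from the inequality $g_s(z)\le1$ defining $G_s$: it equals $1$ on $u_0,\dots,u_{s-1}$ and is strictly below $1$ on $u_s$, so $0=\sum_t\lambda_t g_s(u_t)=-\lambda_s\bigl(1-g_s(u_s)\bigr)$ forces $\lambda_s=0$; then iterate with $G_{s-1},\dots,G_1$. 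Since the hypothesis is purely combinatorial, the conclusion applies to all realizations at once.

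Next I would exploit a combinatorial automorphism of $\wp{p}{1}=C_p\wedgeproduct\simplex{1}$, built from $\mathrm{Aut}(C_p)$ and $\mathrm{Aut}(\simplex{1})$: composing the cyclic rotation of the $p$‑gon (sending the facet pair of index $i$ to the one of index $i{+}1$, cyclically) with the reflection interchanging the two facets $h_{0,0},h_{0,1}$ of the $0$‑th interval factor gives an automorphism $\phi$. One checks that $\phi$ acts on the $2p$ facets $h_{i,j}$ as a single $2p$‑cycle $h_{0,0}\to h_{1,0}\to\dots\to h_{p-1,0}\to h_{0,1}\to\dots\to h_{p-1,1}\to h_{0,0}$, hence transitively, and that $\phi$ permutes the prescribed set $A$ cyclically, $v_0\mapsto v_1\mapsto\dots\mapsto v_{p-1}\mapsto\bar v_0\mapsto\dots\mapsto\bar v_{p-1}\mapsto v_0$; in particular $\phi(A)=A$. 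Relabeling a given realization by a power of $\phi$ shows that ``$A\cap F$ is affinely independent for every facet $F$ and every realization'' is equivalent to the same assertion for the single facet $F_0:=h_{0,0}$. (If the entries $0,1$ in the definition of $A$ are read with the opposite convention, replace $F_0$ by $h_{0,1}$; that is again a symmetry fixing $A$.)

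Now I would spell out $A\cap F_0$ using Corollary~\ref{cor:VerticesWP}: a vertex of $A$ meets $h_{0,0}$ exactly when its $0$‑th block is $[2]$ or the singleton incident to $h_{0,0}$, which gives $A\cap F_0=\{v_0,\dots,v_{p-1}\}\cup\{\bar v_0,\bar v_{p-1}\}$, a set of exactly $p+2=\dim F_0+1$ vertices; so affine independence here amounts to affinely spanning $\aff(F_0)$. I would then apply the criterion with the ordering $v_0,v_1,\dots,v_{p-1},\bar v_0,\bar v_{p-1}$ and the witnesses
\[
G_t=h_{t-1,1}\ \ (1\le t\le p-1),\qquad G_p=h_{p-1,1},\qquad G_{p+1}=h_{1,0}.
\]
Checking the incidences is direct bookkeeping on the coordinate vectors: for $t\le p-1$ the index $t{-}1$ lies in one of the $[2]$‑blocks or in the trailing ``$1$'' part of each of $v_0,\dots,v_{t-1}$, so these all lie on $h_{t-1,1}$, whereas $t{-}1$ lies in the leading ``$0$'' part of $v_t$, so $v_t\notin h_{t-1,1}$; likewise all $v_k$ lie on $h_{p-1,1}$ while $\bar v_0$ does not, and $v_0,\dots,v_{p-1},\bar v_0$ lie on $h_{1,0}$ while $\bar v_{p-1}$ does not. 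Hence $A\cap F_0$ is affinely independent in every realization, and by the symmetry reduction so is $A\cap F$ for every facet $F$; that is, $A$ is an affine support set.

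The main obstacle is getting the structure of the argument right rather than any single computation. Without the combinatorial chain criterion one is reduced to showing that a determinant built from the Cramer‑rule coordinates of the vertices never vanishes over the whole semialgebraic realization space of $\wp{p}{1}$, which is unpleasant; the criterion replaces this by a finite, realization‑free incidence check. And without the automorphism $\phi$ one must run the chain construction for all $2p$ facets, where $A\cap h_{i,j}$ has a more tangled interleaving of the $v_k$'s and $\bar v_k$'s. The real work is therefore to identify the automorphism that simultaneously fixes $A$ and is facet‑transitive, and then to choose the ordering of $A\cap F_0$ so that the common facets of each initial segment stay large enough to separate off the next vertex.
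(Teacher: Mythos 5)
Your proof is correct, and at the level of strategy it coincides with the paper's: reduce by a symmetry of $\wp{p}{1}$ that preserves $A$ to the single facet $F_0=h_{0,0}$, observe $A\cap F_0=\{v_0,\dots,v_{p-1},\bar v_0,\bar v_{p-1}\}$ ($p+2=\dim F_0+1$ points), and certify affine independence by a purely combinatorial induction valid in every realization. The difference lies in how the certificate is packaged. The paper exhibits an ascending flag of faces $G_0\subset G_1\subset\dots\subset G_{p+1}=F_0$ with $\dim G_i=i$ and $|A\cap F_0\cap G_i|=i+1$, so that each step adjoins exactly one point of $A$ lying outside $\aff(G_{i-1})$; you instead use a chain of separating facet hyperplanes ($h_{t-1,1}$ for $t\le p-1$, then $h_{p-1,1}$ and $h_{1,0}$), each containing all earlier points of your ordering but not the next, and kill the coefficients of an affine dependence one at a time with the facet functionals. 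These are two faces of the same inductive separation argument (reading the paper's flag top-down, $G_{i-1}=G_i\cap h$ for a single further facet $h$, which is precisely a separating hyperplane for the point dropped at that step), with a different ordering of the points and a different choice of witnesses. What your version buys is explicitness: the separation criterion is proved from first principles, and the symmetry reduction is backed by a concrete facet-transitive automorphism $\phi=\tau_0\circ\rho$ that you verify permutes $A$ in a single $2p$-cycle, whereas the paper only invokes ``the symmetry of the wedge product and the chosen subset $A$.'' I checked the incidence bookkeeping ($v_k\in h_{i,1}$ iff $i\ge k$, the memberships in $h_{0,0}$, $h_{p-1,1}$, $h_{1,0}$, and the action of $\phi$ on $A$ and on the facets) and it is all correct.
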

\begin{proof}
  Using the symmetry of the wedge product and the chosen subset $A$ it suffices to show that the subset $A_0 = A \cap
  F_0$ contained in the facet $F_0 = (0,\emptyset,\dotsc,\emptyset)$ is affinely independent for every realization of
  $\wp{p}{1}$.

  Consider the following flag $\bar{v}_{0} = G_0 \subset G_1 \subset \dots \subset G_{p+1}=F_0$ of faces $G_i
  \subset \wp{p}{1}$ with:
  \begin{align*}
    G_0 &= ([2],[2],0,\dots,0), \qquad
    G_1 = ([2],0,0,\dots,0), \qquad
    G_2 = (0,0,0,\dots,0), \text{ and} \\
    G_k &= (0,0,0,\dots,0,\underbrace{\emptyset,\dots,\emptyset}_{k-2}) \quad \text{for $k=3,\dots,p+1$.}
  \end{align*}
  Then $\dim G_i = i$ and $|A_0 \cap G_i| = i+1$ and thus $A_0$ is affinely independent.
\end{proof}

Using Corollary~\ref{cor:ModuliProjection} we obtain the following lower bound on the number of moduli of the wedge
product surfaces.

\begin{corollary}[Moduli of $\WPsurf{p}{4}$] 
  The realizations of the surfaces~$\WPsurf{p}{4} \subset \R^3$ obtained via projections of the wedge products
  $\wp{p}{1} \subset \R^{2+p}$ have at least $6p$ moduli.
\end{corollary}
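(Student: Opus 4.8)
The plan is to combine the affine support set just constructed with the corollary on moduli under projection. First I would recall that Theorem~\ref{thm:pdwp} provides a realization of $\wp{p}{1}$ in $\R^{2+p}$ whose projection to the last four (and then three) coordinates preserves all the $p$-gons of $\WPsurf{p}{4}$, and in particular preserves all the vertices of $\WPsurf{p}{4}$. By Lemma~\ref{lem:VertsAndEdgesOfWPsurf}, the subcomplex $\WPsurf{p}{4}$ contains \emph{all} vertices of $\wp{p}{1}$; so in particular it contains every vertex in the affine support set $A$ of cardinality $2p$ supplied by the previous theorem. Thus the projection $\pi$ to $\R^3$ underlying the realization of $\WPsurf{p}{4}$ preserves all vertices of $A$.

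Next I would apply Corollary~\ref{cor:ModuliProjection} directly: with $e = 3$ and $|A| = 2p$, it yields $\moduli{\pi(\wp{p}{1})} \ge 3\cdot 2p = 6p$, and the corollary explicitly states that the bound also holds for arbitrary subcomplexes of $\pi(\wp{p}{1})$ that contain all vertices of $A$ — which is exactly our situation for the subcomplex realizing $\WPsurf{p}{4}$. Hence $\moduli{\WPsurf{p}{4}} \ge 6p$, which is the claim.

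The only point needing a little care — and the step I expect to be the main (minor) obstacle — is verifying the hypothesis of Corollary~\ref{cor:ModuliProjection} that the projection preserves \emph{all} vertices in $A$, not merely the $p$-gon faces. Since each vertex of $\WPsurf{p}{4}$ is a vertex of some preserved $p$-gon and the faces of a preserved face are themselves preserved (preservation is inherited by subfaces, as the truncated facet normals tight at a face form a sub-collection of those tight at any of its faces, and positive spanning is a closed-upward property under enlarging the index set — or, more directly, one checks that the $2p$ vertices of $A$ each lie on one of the surface's $p$-gons from Corollary~\ref{cor:pGonsOfWP}, all of which are preserved in the construction of Theorem~\ref{thm:pdwp}). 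With that observation in place the result is immediate. I would also remark for completeness that this lower bound of $6p$ grows only linearly in $p$, in stark contrast to the naive count $2^{p-2}(12-p)-15$ which becomes negative; the gap reflects the large number of hidden incidence theorems, and whether the true number of moduli is linear or exponential in $p$ remains open.
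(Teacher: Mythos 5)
Your proposal is correct and matches the paper's own (essentially one-line) argument: the paper derives the corollary directly by applying Corollary~\ref{cor:ModuliProjection} with $e=3$ to the affine support set $A$ of cardinality $2p$ from the preceding theorem. Your additional check that the projection preserves all vertices of $A$ — because subfaces of preserved faces are preserved, since enlarging the index set of tight facets only helps positive spanning — is a detail the paper leaves implicit, and you handle it correctly.
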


\begin{remark}
  The duals $\WPsurf{p}{4}^*$ of the surfaces $\WPsurf{p}{4}$ are also contained in high-dimensional cubes and realizations are obtained
  via projections in~\cite{JoswigRoerig2007:NeighborlyCubical}. Using affine support sets of cubes yields the same lower
  bounds on the number of moduli for the dual surfaces (cf.~\cite[Thm.~2.33]{Thilo-phd}) even though the construction is
  based on a different high-dimensional polytope.
\end{remark}

\bibliographystyle{amsplain}
\begin{small}
    \bibliography{literature}
\end{small}

\end{document}